\begin{document}

\title[Discontinuous Galerkin approximations]
{Discontinuous Galerkin approximations for an optimal control problem of three-dimensional Navier-Stokes-Voigt equations}

\author [C.T. Anh, T.M. Nguyet]
{Cung The Anh$^{\natural}$ and Tran Minh Nguyet}

\address{Cung The Anh \hfill\break
Department of Mathematics, Hanoi National University of Education \hfill\break
136 Xuan Thuy, Cau Giay, Hanoi, Vietnam}
\email{anhctmath@hnue.edu.vn} 
\address{Tran Minh Nguyet\hfill\break
Department of Mathematics, Thang Long University \hfill\break
Nghiem Xuan Yem, Hoang Mai, Hanoi, Vietnam}
\email{tmnguyettlu@gmail.com}

\subjclass[2010]{49J20; 49K20; 35Q35; 65N30}
\keywords{Navier-Stokes-Voigt equations; optimal control; convergence; {\it a priori} error estimates; discontinuous Galerkin methods}

\begin{abstract} 
We analyze a fully discrete scheme based on the discontinuous (in time) Galerkin approach, which is combined with conforming finite element subspaces in space, for the distributed optimal control problem of the three-dimensional Navier-Stokes-Voigt equations with a quadratic objective functional and box control constraints. The space-time error estimates of order $O(\sqrt{\tau}+h)$, where $\tau$ and $h$ are respectively the time and space discretization parameters, are proved for the difference between the locally optimal controls and their discrete approximations.
\end{abstract}
\thanks{$^{\natural}$ Corresponding author: anhctmath@hnue.edu.vn}

\maketitle

\numberwithin{equation}{section}
\newtheorem{theorem}{Theorem}[section]
\newtheorem{remark}{Remark}[section]
\newtheorem{definition}{Definition}[section]
\newtheorem{lemma}{Lemma}[section]
\newtheorem{corollary}{Corollary}[section]
\newtheorem{proposition}[theorem]{Proposition}

\section{Introduction}
Let $\Omega$ be an open bounded domain in $\mathbb{R}^3$ with boundary $\partial\Omega$. We denote the space-time cylinder by $Q=\Omega\times (0,T)$, where $T>0$ is given. In this paper we prove some error estimates for the numerical approximation of a distributed optimal control problem governed by the evolution Navier-Stokes-Voigt equations with pointwise control constraints. More precisely, we consider the following problem
\begin{equation*}
(P)\hspace{0.5cm}
\begin{cases}
 \min\, J(y,u)\\
u\in U_{\alpha,\beta},
\end{cases}
\end{equation*}
where
\begin{multline*}
J(y,u)=\dfrac{\alpha_T}{2}\int_{\Omega}|y(x,T)-y_T(x)|^2dx + \dfrac{\alpha_Q}{2}\iint_{Q}|y(x,t)-y_Q(x,t)|^2dxdt\\
+\dfrac{\gamma}{2}\iint_{Q}|u(x,t)|^2dxdt,
\end{multline*}
and $U_{\alpha,\beta}$ is the set of admissible controls defined for given real constants $\alpha_j,\beta_j, j=1,2,3$, by
\begin{equation*}
\label{AdSet}
U_{\alpha,\beta}=\{u\in \mathbb{L}^2(Q): \alpha_j\le u_j(x,t)\le \beta_j \text{ a.e. } (x,t)\in Q, j=1,2,3\}.
\end{equation*}
Here the free variables - state $y$ and control $u$ - have to fulfill the following 3D Navier-Stokes-Voigt (sometimes written Voight) equations
\begin{equation}
\text{}
\begin{cases}\label{NSV1}
y_t- \nu\Delta y -\alpha^2\Delta y_t +(y\cdot\nabla)y+\nabla p&=u, \;x\in \Omega, t>0,\\
\hfil \nabla \cdot y&=0,\; x\in \Omega, t>0,\\
\hfil y(x,t)&=0, \; x\in\partial\Omega, t>0,\\
\hfil y(x,0)&=y_0(x),\; x\in\Omega.\\
\end{cases}
\end{equation}
In system \eqref{NSV1}, $y=y(x,t)=(y_1(x,t),y_2(x,t),y_3(x,t))$ is the velocity, $y_0=y_0(x)$ is the initial velocity, $p=p(x,t)$ is the pressure, $\nu >0$ is the kinematic viscosity coefficient, and $\alpha\ne 0$ is the length-scale parameter characterizing the elasticity of the fluid. 

To study the above optimal control problem we assume that 
\begin{itemize}
\item The domain $\Omega$ is an open bounded subset of $\mathbb{R}^3$ with $C^2$ boundary $\partial \Omega.$
\item The initial value $y_0$ is a given function in $D(A)$. The desired states have to satisfy $y_T\in V$ and $y_Q\in \mathbb{L}^2(Q).$
\item The coefficients $\alpha_T,\alpha_Q$ are non-negative real numbers, where at least one of them is positive to get a non-trivial objective functional. The regularization parameter $\gamma$, which measures the cost of the control, is also a positive number.
\end{itemize}

The Navier-Stokes-Voigt equations was introduced by Oskolkov in \cite{Oskolkov} as a model of motion of certain linear viscoelastic incompressible fluids. This  system was also proposed by Cao, Lunasin and Titi in \cite{Cao} as a regularization, for small values of $\alpha$, of the 3D Navier-Stokes equations for the sake of direct numerical simulations. The presence of the regularizing term $-\alpha^2 \Delta u_t$ in \eqref{NSV1} has two important consequences. First, it leads to the global well-posedness of \eqref{NSV1} both forward and backward in time, even in the case of three dimensions. Second, it  changes the parabolic character of the limit Navier-Stokes equations, so the Navier-Stokes-Voigt system behaves like a damped hyperbolic system. In fact, the Navier-Stokes-Voigt system is perhaps the newest model in the the so-called $\alpha$-models in fluid mechanics (see e.g. \cite{Holst2010}), but it has attractive advantage over other $\alpha$-models in that one does not need to impose any additional artificial boundary condition (besides the Dirichlet boundary conditions) to get the global well-posedness. We also refer the reader to \cite{EbrahimiHolstLunasin}  for an interesting  application of the Navier-Stokes-Voigt equations in image inpainting.

In the past years, the existence and long-time behavior of solutions to the Navier-Stokes-Voigt equations has attracted the attention of many mathematicians. In bounded domains or unbounded domains satisfying the Poincar\'{e} inequality, there are many results on the existence and long-time behavior of solutions in terms of existence of attractors, see e.g. \cite{AT2013, Gal2015, Damazio2016,  GMR2012, KT2009, Qin2012, YueZhong}. In the whole space, the existence and time decay rates of solutions have been studied in \cite{AT2016, Niche2016, ZZ2015}. However, to the best of our knowledge, there are few works on optimal control problems of Navier-Stokes-Voigt equations, except two recent works \cite{AN2016, AN2017} where the quadratic optimal control and time optimal control problems with distributed controls were investigated. In this paper we continue studying a numerical scheme for the distributed optimal control problem of this system in both time and space variables.

Since the pioneering work \cite{AT1990} in 1990 of Abergel and Temam, optimal control problems for Navier-Stokes equations have been studied by many authors in the last three decades. The analysis of these control problems is well understood, see e.g. \cite{G2003, HK2001, Sritharan1998, TW2006, Wachsmuth} and references therein, where various aspects including first and second order necessary conditions are developed and analyzed. To the contrary, numerical analysis of such optimal control problems is quite limited. This is due to the fact that the restricted regularity of solutions of the evolutionary Navier-Stokes equations, as well as the divergence free condition, and the convective nature of the adjoint equation of the first order necessary condition, pose significant difficulties when analyzing numerical schemes. Standard techniques developed for the numerical analysis of the uncontrolled Navier-Stokes equations cannot be directly applied in the optimal control setting. Furthermore, the presence of control constraints, create many additional difficulties and hence require special techniques involving both first and second order necessary and sufficient conditions. In the literature not many contributions to numerical analysis for control problems with time-dependent Navier-Stokes equations can be found. In \cite{GM2000, GM2000a} Gunzburger and Manservisi presente a numerical approach to control of the instationary Navier-Stokes equations using the first discretize then optimize approach. The first optimize then discretize approach applied to the same problem class is discussed by Hinze in \cite{Hinze2000}. Deckelnick and Hinze provide numerical analysis for a general class of control problems with the instationary Navier-Stokes system in \cite{DH2004}.

In \cite{Casas2012} Casas and Chrysafinos proposed a numerical scheme which is based on the discontinuous time-stepping Galerkin scheme for the piecewise constant time discretization combined with standard conforming finite element subspaces for the discretization in space. They presented space-time error estimates of order $O(h)$, under suitable regularity assumptions on the data, when the controls are discretized by piecewise constants in space and time. Two parameters $\tau$ and $h$ are associated to the numerical scheme (here $\tau$ and $h$, indicating the size of the grids in time and space) and they were needed to satisfy the usual technical assumption $\tau \leq Ch^2$ in order to prove that the discrete equation has a unique solution, and the estimate was optimal in $L^2(0, T; \mathbb{H}^1(\Omega))$ norms for the state and adjoint. The key idea of  \cite{Casas2012} was to utilize ideas from \cite{CMR2007} developed for the stationary Navier-Stokes equations, together with a detailed error analysis of the uncontrolled state and adjoint equations of the underlying scheme. Later in \cite{Casas2015}, Casas and Chrysafinos continued their work in \cite{Casas2012} in the sense that error estimates in $L^2(0, T; \mathbb{L}^2(\Omega))$ of order $O(h^2)$ and $O(h^{3/2-2/p})$ with $p > 3$ depending on the regularity of the target and the initial velocity were proved. We also refer the interested reader to \cite{Casas2016, Casas2017} for some very close related results.

In this paper following the general lines of the approach in \cite{Casas2012, Chrys} we will study a numerical scheme for the optimal control problem $(P)$ of 3D Navier-Stokes-Voigt equations based on the discontinuous time-stepping Galerkin scheme for the piecewise constant time combined with standard conforming finite element subspaces for the discretization in space. It is noticed that in \cite{AN2016} we proved the existence of optimal solutions and established the first and second order optimality conditions for problem $(P)$, where the admissible set is an arbitrary non-empty, convex, closed subset in $\mathbb{L}^2(Q)$. Our main result in the present paper is to derive space-time error estimates under suitable regularity assumptions on the data together with a detailed error analysis of the uncontrolled state and adjoint equations of the underlying scheme. Here the presence of control constraints prevents a direct analysis of the system of state and adjoint state equations. To overcome this difficulty, as in \cite{Casas2012}, we need to use the second order conditions for optimality. The box control constraint assumption ensures that the set of discrete control designed in the paper is convex and closed, which implies that the discrete control problem has a solution. Besides, in our work, since the solution of the Navier-Stokes-Voigt equations is more regular, we are able to prove the uniqueness of the discrete equation without the widely used technical assumption $\tau \leq Ch^2$ as in \cite{Casas2012}, and we obtain that the order of error estimates is $O(\sqrt{\tau}+h)$ instead of $O(h)$ as in \cite{Casas2012}. It is noticed that, under the assumption $\tau\le Ch^2$ as in \cite{Casas2012}, these two orders of errors are the same. To prove the main results, we modify the techniques used in \cite{Casas2012, Chrys} with appropriate adjusts while contributing project operators from the spaces of states and pressure terms to the respective discrete spaces. Because of some technical reasons as in \cite{Casas2012}, the pressure terms must belong to the space $L^2(0,T;H^1(\Omega))$, so we need to assume that the initial state $y_0$ is in $D(A)$ instead of $V$ as in \cite{AN2016}, that is, we consider the strong solutions. It is worthy noticing that, by using a variational discretization in \cite{Hinze2005}, Casas and Chrysafinos can also prove error estimates (in $L^2(0, T; \mathbb{L}^2(\Omega))$)   of order $O(h^2)$ for the distributed optimal control problem of 2D Navier-Stokes equations in \cite{Casas2015}. The proof of such an error estimates for the variational discretization of the optimal control problem $(P)$ will be the goal of a forthcoming paper currently in preparation.

The paper is organized as follows. In Section 2, for convenience of the reader, we recall some auxiliary results on the existence and unique of weak and strong solutions to the system \eqref{NSV1}. We also restate the optimality conditions of the optimal control problem obtained in \cite{AN2016}, however, in this paper these results are transferred into the box control constraint case. The main results of the paper are presented in Section 3, where we analyze the discrete state equations, the discrete adjoint state equations, and we prove the convergence of the discrete control problem and derive the space-time error estimates.

\section{Preliminaries and auxiliary results }
\subsection{Function spaces and inequalities for the nonlinear terms}
Let $\Omega$ be an open bounded domain in $\mathbb{R}^3$ with $C^1$ boundary $\partial\Omega$. For convenience, we set
$$L_0^2(\Omega):=\left\{f\in L^2(\Omega): \int_\Omega f(x)dx=0\right\}, \quad\mathbb{L}^2(\Omega):=L^2(\Omega)^3, $$
$$\mathbb{H}^1(\Omega):=H^1(\Omega)^3,\;\; \mathbb{H}^1_0(\Omega):=H^1_0(\Omega)^3.$$ 
We denote by $\mathbb{H}^{-1}(\Omega)$ the dual spaces of $\mathbb{H}^1_0(\Omega)$. Define
$$
(u,v):=\int\limits_{\Omega} \sum_{j=1}^3 u_jv_j\,  dx, \; \; \; u=(u_1,u_2,u_3),v=(v_1,v_2,v_3)\in \mathbb{L}^2(\Omega),
$$
$$
((u,v)):=\int\limits_{\Omega}\sum_{j=1}^3 \nabla u_j \cdot \nabla v_j\,  dx, \; \;\; u=(u_1,u_2,u_3), v=(v_1,v_2,v_3)\in  \mathbb{H}^1_0(\Omega),$$
and the associated norms $|u|^2:=(u,u), \|u\|^2:=((u,u)).$

Set 
$$ \mathcal{V}=\big\{u\in(C_0^\infty(\Omega))^3\, :\  \nabla\cdot u=0\big\},$$
and denote by $H$ and $V$ the closure of $\mathcal{V}$ in $\mathbb{L}^2(\Omega$ and $\mathbb{H}^1_0(\Omega)$, respectively. Then $H,\,V$ are Hilbert spaces with scalar products $(.,.),\,((.,.))$ respectively.

Let $X$ be a real Banach space with the norm $\|.\|_X$. We denote by $L^p(0,T;X)$ the standard Banach space of all functions from $(0,T)$ to $X$, endowed with the norm
\begin{align*}
\| y \|_{L^p(0,T;X)}&:= \left( \int^{T}_{0}\|y(t)\|_X^p dt \right)^{1/p},\quad 1\le p <\infty,\\
\| y \|_{L^\infty(0,T; X)}&:= \underset{t\in (0,T)}{{\rm ess sup}}\;\|y(t)\|_X.
\end{align*}
When $X$ is a Banach space with the dual space $X'$, we will use $\|.\|_{X'}$ for the norm in $X'$, $\langle .,.\rangle_{X',X}$ for the duality pairing between $X'$ and $X$. In this case, $L^p(0,T;X)$ is also a Banach space, with the dual space being $L^{p'}(0,T;X')$, where $1/p+1/p'=1$. The pairing between $u\in L^{p'}(0,T;X')$ and $v\in L^{p}(0,T;X)$ is
$$\langle u,v\rangle_{L^{p'}(0,T; X'),L^p(0,T; X)}=\int^{T}_{0}\langle u(t),v(t)\rangle_{X',X}dt.$$
To deal with the time derivative in the state equation, we introduce the common space of functions $y$ whose time derivatives $y_t$ exist as abstract functions
$$W^{1,2}(0,T; X):= \{ y\in L^2(0,T;X): y_t \in L^2(0,T;X) \},$$
endowed with the norm
$$ \| y \|_{W^{1,2}(0,T; X)}:=\left( \|y\|^2_{L^2(0,T; X)}+\|y_t\|^2_{L^2(0,T; X)}\right)^{1/2}.$$
When $X$ is a Hilbert space, $L^2(0,T;X)$ and $W^{1,2}(0,T;X)$ are also Hilbert spaces. We will use the following embedding results:
\begin{align*}
&W^{1,2}(0,T; X)\hookrightarrow C([0,T];X) \text{ is continuous (see \cite[p. 190]{Robinson})},\\
&W^{1,2}(0,T;\mathbb{H}^1(\Omega))\hookrightarrow \mathbb{L}^2(Q) \text{ 
is compact (see \cite{Simon})},\\
&W^{1,2}(0,T;\mathbb{H}^1(\Omega))\hookrightarrow C([0,T];\mathbb{L}^2(\Omega)) \text{ 
is compact (see \cite{Simon})}.
\end{align*}
We now define the bilinear and trilinear forms $a:\mathbb{H}^1(\Omega)\times\mathbb{H}^1(\Omega)\to \mathbb{R}$ and $c:\mathbb{H}^1(\Omega)\times\mathbb{H}^1(\Omega)\times\mathbb{H}^1(\Omega)\to \mathbb{R}$ by
\begin{align*}
a(u,v)&=\sum_{i,j=1}^{3}\int_\Omega \partial_{x_i}u_j\partial_{x_i}v_jdx,\\
c(u,v,w)&=\dfrac{1}{2}\left[b(u,v,w)-b(u,w,v)\right]\;\text{with } b(u,v,w)=\sum_{i,j=1}^3\int\limits_\Omega u_i\dfrac{\partial v_j}{\partial x_i}w_j \,dx.
\end{align*}
It is easy to check that if $u\in V, v,w\in \mathbb{H}^1_0(\Omega)$ then $b(u,v,w)=-b(u,w,v)$. Hence
\begin{equation*} 
b(u,v,v)=0,\;\forall\, u\in V, v\in\mathbb{H}^1_0(\Omega).
\end{equation*}
We also define an operator $A:V\to V'$ by $\langle Au,v\rangle=((u,v)),\, u,v\in V.$ Denote by $D(A)$ the domain of $A$, $D(A):=\{u\in H:Au\in H\}$. We have $D(A)=\mathbb{H}^2(\Omega)\cap V.$ The norm in $D(A)$ is defined by $\|u\|_{D(A)}:=|Au|.$
\begin{lemma} \label{trilinear} \cite{ConstantinFoias, Temam} We have
\begin{align}
&b(u,v,w)=c(u,v,w)=-c(u,w,v),\quad\forall\; u\in V,\forall\, v,w\in \mathbb{H}^1_0(\Omega),\notag\\
&|b(u,v,w)|\le C\|u\|\|v\|^{1/2}|Av|^{1/2}|w|,\quad\forall u\in V,v\in D(A),w\in H,\label{b4}\\
&|b(u,v,w)|\le C|u|^{1/4}\|u\|^{3/4}\|v\||w|^{1/4}\|w\|^{3/4},\quad \forall \,u,\,v,\,w\,\in\, \mathbb{H}^1_0(\Omega),\notag\\
&|b(u,v,w)|\le C\|u\|\|v\|\|w\|, \quad \forall \,u,\,v,\,w\,\in\, \mathbb{H}^1_0(\Omega),\notag\\
&|b(u,v,u)|\leq C|u|^{1/2}\|u\|^{3/2}\|v\|, \quad \forall\, u,v\,\in\, \mathbb{H}^1_0(\Omega).\notag\\
&c(u,v,w)=-c(u,w,v), \quad \forall\, u,v,w\in \mathbb{H}^1_0(\Omega),\notag\\
&c(u,v,v)=0, \quad\forall\, u,v\in \mathbb{H}^1_0(\Omega),\notag\\
&|c(u,v,w)|\le C\|u\|\|v\|\|w\|, \quad \forall\, u,v,w\in \mathbb{H}^1_0(\Omega),\notag\\
&|c(u,v,w)|\le C|u|^{1/4}\|u\|^{3/4}\|v\|\|w\|, \quad \forall\, u,v,w\in \mathbb{H}^1_0(\Omega).\notag
\end{align}
\end{lemma}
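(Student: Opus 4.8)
The plan is to derive all of these identities and bounds from three standard ingredients: integration by parts combined with the divergence-free constraint (for the algebraic identities), Hölder's inequality, and the Sobolev/interpolation inequalities available in space dimension three, namely $\mathbb{H}^1_0(\Omega)\hookrightarrow\mathbb{L}^6(\Omega)$, the Ladyzhenskaya inequality $\|u\|_{\mathbb{L}^4(\Omega)}\le C|u|^{1/4}\|u\|^{3/4}$, and the elliptic regularity estimate $\|v\|_{\mathbb{H}^2(\Omega)}\le C|Av|$ for $v\in D(A)$ (which holds since $\partial\Omega$ is of class $C^2$). None of the steps is genuinely hard; the statement is classical and a reference proof can be found in \cite{ConstantinFoias, Temam}, so I would really only sketch the computations.

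First I would settle the algebraic identities. The relations $c(u,v,w)=-c(u,w,v)$ and $c(u,v,v)=0$ for all $u,v,w\in\mathbb{H}^1_0(\Omega)$ are immediate from the definition of $c$ as the antisymmetrization of $b$, with no use of the divergence-free condition. For $u\in V$ and $v,w\in\mathbb{H}^1_0(\Omega)$, integrating by parts in $b(u,v,w)=\sum_{i,j}\int_\Omega u_i(\partial_{x_i}v_j)w_j\,dx$ and using $\nabla\cdot u=0$ gives $b(u,v,w)=-b(u,w,v)$; hence $c(u,v,w)=\frac12\big(b(u,v,w)-b(u,w,v)\big)=b(u,v,w)=-c(u,w,v)$, which is the first line of the lemma.

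Next I would handle the boundedness estimates for $b$ on $\mathbb{H}^1_0(\Omega)$ by choosing, in each case, a suitable Hölder triple in $\int_\Omega|u|\,|\nabla v|\,|w|\,dx$. Using exponents $(4,2,4)$ together with $\mathbb{H}^1_0(\Omega)\hookrightarrow\mathbb{L}^4(\Omega)$ gives $|b(u,v,w)|\le C\|u\|\|v\|\|w\|$; replacing the $\mathbb{L}^4$ bounds on $u$ and $w$ by the Ladyzhenskaya inequality produces the interpolated estimate $|b(u,v,w)|\le C|u|^{1/4}\|u\|^{3/4}\|v\|\,|w|^{1/4}\|w\|^{3/4}$, and setting $w=u$ there yields $|b(u,v,u)|\le C|u|^{1/2}\|u\|^{3/2}\|v\|$. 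The corresponding estimates for $c$ then follow from $|c(u,v,w)|\le\frac12\big(|b(u,v,w)|+|b(u,w,v)|\big)$ together with $\|w\|_{\mathbb{L}^4(\Omega)}\le C\|w\|$.

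The one inequality requiring a little more care — and the step I would flag as the main (still modest) obstacle — is \eqref{b4}, where $v\in D(A)$ but $w$ lies only in $H$, so one cannot integrate by parts to shift the derivative off $v$. Here I would instead bound $|b(u,v,w)|\le\|u\|_{\mathbb{L}^6(\Omega)}\|\nabla v\|_{\mathbb{L}^3(\Omega)}|w|$, estimate $\|u\|_{\mathbb{L}^6(\Omega)}\le C\|u\|$ by the Sobolev embedding $\mathbb{H}^1_0(\Omega)\hookrightarrow\mathbb{L}^6(\Omega)$, and control $\|\nabla v\|_{\mathbb{L}^3(\Omega)}$ through the interpolation $\|\nabla v\|_{\mathbb{L}^3(\Omega)}\le\|\nabla v\|_{\mathbb{L}^2(\Omega)}^{1/2}\|\nabla v\|_{\mathbb{L}^6(\Omega)}^{1/2}$, using $\|\nabla v\|_{\mathbb{L}^2(\Omega)}=\|v\|$ and $\|\nabla v\|_{\mathbb{L}^6(\Omega)}\le C\|v\|_{\mathbb{H}^2(\Omega)}\le C|Av|$, the last inequality being the elliptic regularity estimate for $A$. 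Combining these gives $|b(u,v,w)|\le C\|u\|\,\|v\|^{1/2}|Av|^{1/2}|w|$, which completes the list.
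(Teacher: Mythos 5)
Your proposal is correct. The paper gives no proof of this lemma at all — it simply cites \cite{ConstantinFoias, Temam} — and your argument (antisymmetrization and integration by parts with $\nabla\cdot u=0$ for the identities; H\"older with the exponent triples $(4,2,4)$ and $(6,3,2)$, the embedding $\mathbb{H}^1_0(\Omega)\hookrightarrow\mathbb{L}^6(\Omega)$, the 3D Ladyzhenskaya inequality, the interpolation $\|\nabla v\|_{\mathbb{L}^3}\le\|\nabla v\|_{\mathbb{L}^2}^{1/2}\|\nabla v\|_{\mathbb{L}^6}^{1/2}$, and elliptic regularity $\|v\|_{\mathbb{H}^2(\Omega)}\le C|Av|$ for the estimate \eqref{b4}) is exactly the standard route taken in those references.
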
 
\subsection{Existence and uniqueness of solutions to the Navier-Stokes-Voigt equations}
\begin{definition} Let $u\in L^2(0,T;\mathbb{L}^2(\Omega))$ be given. A pair $(y,p)\in W^{1,2}(0,T; V)\times L^2(0,T;L^2_0(\Omega))$ is called a weak solution to the system \eqref{NSV1} on the interval $(0,T)$ if it fulfills
\begin{equation}\label{ST1}
\begin{cases}
(y_t(s),w)+\nu a(y(s),w) +\alpha^2 a(y_t(s),w) +c(y(s),y(s),w)+(p(s),{\rm div}\,w)\\
\hspace{4cm} =(u(s),w), \;\;\forall w\in \mathbb{H}^1_0(\Omega), \text{ for a.e. } s\in [0,T],\\
y(0)=y_0.
\end{cases}
\end{equation}
\end{definition}
\begin{remark} {\rm 
It is clear that if $(y,p)\in W^{1,2}(0,T; V)\times L^2(0,T;L^2_0(\Omega))$ satisfies \eqref{ST1} then $y$ satisfies equations
\begin{equation}
\label{ST2}
\begin{cases}
(y_t(s),w)+\nu a(y(s),w) +\alpha^2 a(y_t(s),w) +c(y(s),y(s),w)=(u(s),w),\\
\hspace{6cm} \forall w\in V,\;\text{ for a.e. } s\in [0,T],\\
y(0)=y_0.
\end{cases}
\end{equation}
Conversely, if $y\in W^{1,2}(0,T;V)$ satisfies \eqref{ST2} then there exists a unique $p\in L^2(0,T;L^2_0(\Omega))$ such that \eqref{ST1} holds.}
\end{remark}
\begin{theorem}\label{tontaiNSV} \cite{AT2013}
For any $y_0\in V$ and $u\in L^2(0,T;\mathbb{L}^2(\Omega))$ given, problem \eqref{ST2} has a unique weak solution $(y,p)$ belonging to $W^{1,2}(0,T;V)\times L^2(0,T;L_0^2(\Omega))$. Furthermore, if $\|u\|_{L^2(0,T;\mathbb{L}^2(\Omega))}\le M$
 then 
\begin{equation}
\label{ST2.1}
\|y\|_{W^{1,2}(0,T;V)}+\|p\|_{L^2(0,T;L^2(\Omega))}\le C_M,
\end{equation}
where $C_M$ is a constant depending on $M$.
\end{theorem}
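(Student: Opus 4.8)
The plan is to prove Theorem~\ref{tontaiNSV} via a standard Faedo--Galerkin procedure, adapted to exploit the key structural advantage of the Navier--Stokes--Voigt system, namely the regularizing term $\alpha^2 a(y_t,w)$, which upgrades the control of $y_t$ from a weak $V'$-type bound (as for Navier--Stokes) to a genuine $V$-bound. First I would fix an orthonormal basis $\{w_k\}_{k\ge 1}$ of $H$ consisting of eigenfunctions of $A$ (so that it is also orthogonal in $V$), let $V_m=\operatorname{span}\{w_1,\dots,w_m\}$, and seek $y_m(t)=\sum_{k=1}^m g_{km}(t)w_k$ solving the Galerkin system obtained by testing \eqref{ST2} against each $w_k$. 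Because the linear operator $I+\alpha^2 A$ is invertible on $V_m$, the resulting system of ODEs for $(g_{1m},\dots,g_{mm})$ is in normal form with a quadratic (hence locally Lipschitz) right-hand side, so Picard--Lindel\"of gives a unique local solution; the a priori estimates below then show it is global on $[0,T]$.

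The central step is the a priori estimate. Testing the Galerkin equations with $y_m(t)$ and using $c(y_m,y_m,y_m)=0$ from Lemma~\ref{trilinear}, one gets
\begin{equation*}
\frac{1}{2}\frac{d}{dt}\bigl(|y_m|^2+\alpha^2\|y_m\|^2\bigr)+\nu\|y_m\|^2=(u,y_m)\le |u|\,|y_m|\le \tfrac{1}{2}|u|^2+\tfrac{1}{2}|y_m|^2.
\end{equation*}
Gr\"onwall's inequality, together with $|y_m(0)|^2+\alpha^2\|y_m(0)\|^2\le |y_0|^2+\alpha^2\|y_0\|^2$, yields a bound on $\|y_m\|_{L^\infty(0,T;V)}$ and on $\|y_m\|_{L^2(0,T;V)}$ depending only on $M=\|u\|_{L^2(0,T;\mathbb{L}^2(\Omega))}$ and the data. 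For the time derivative, I would test with $y_{m,t}(t)$: this gives
\begin{equation*}
|y_{m,t}|^2+\alpha^2\|y_{m,t}\|^2+\frac{\nu}{2}\frac{d}{dt}\|y_m\|^2 = (u,y_{m,t})-c(y_m,y_m,y_{m,t}),
\end{equation*}
and the trilinear term is estimated by $|b(y_m,y_m,y_{m,t})|\le C\|y_m\|^2\|y_{m,t}\|$ (the fourth inequality in Lemma~\ref{trilinear}), absorbed by $\tfrac{\alpha^2}{2}\|y_{m,t}\|^2$ on the left using the already-established $L^\infty(0,T;V)$ bound on $y_m$; similarly $(u,y_{m,t})\le \tfrac{1}{2}|u|^2+\tfrac12|y_{m,t}|^2$. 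Integrating in time produces a bound on $\|y_{m,t}\|_{L^2(0,T;V)}$, i.e. a uniform bound on $\|y_m\|_{W^{1,2}(0,T;V)}$ in terms of $M$ and the data.

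With these uniform bounds in hand, I would extract a subsequence $y_m\rightharpoonup y$ weakly in $W^{1,2}(0,T;V)$, hence (by the compact embedding $W^{1,2}(0,T;\mathbb{H}^1(\Omega))\hookrightarrow \mathbb{L}^2(Q)$ recalled in the preliminaries, applied componentwise) strongly in $L^2(0,T;\mathbb{L}^2(\Omega))$ and, along a further subsequence, a.e.\ in $Q$. Passing to the limit in the Galerkin equations is routine for the linear terms; for the nonlinear term $c(y_m,y_m,w_k)$ one writes it as $\tfrac12 b(y_m,y_m,w_k)-\tfrac12 b(y_m,w_k,y_m)$ and passes to the limit using the strong $L^2(Q)$-convergence of $y_m$ against the smooth test function $w_k$ (the product $y_m\otimes y_m$ converges in $L^1$, and since $w_k$ is smooth this suffices). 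Density of $\bigcup_m V_m$ in $V$ extends the identity to all test functions in $V$; the initial condition $y(0)=y_0$ is recovered from weak convergence of $y_m(0)$ together with the continuity $W^{1,2}(0,T;V)\hookrightarrow C([0,T];V)$. The recovery of the pressure $p\in L^2(0,T;L^2_0(\Omega))$ with bound \eqref{ST2.1} follows from the de Rham-type argument quoted in the Remark after the definition (for a.e.\ $t$, the functional $w\mapsto (u,w)-(y_t,w)-\nu a(y,w)-\alpha^2 a(y_t,w)-c(y,y,w)$ vanishes on $V$, hence is a gradient), with the $L^2$-norm estimate coming from an inf--sup (LBB) inequality together with the already-derived bounds on $y$ and $y_t$. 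Uniqueness is obtained by taking two solutions $y^1,y^2$, testing the difference equation for $z=y^1-y^2$ with $z$ itself: the bilinear structure gives $\tfrac12\tfrac{d}{dt}(|z|^2+\alpha^2\|z\|^2)+\nu\|z\|^2=-c(z,y^1,z)$ (after using $c(y^2,z,z)=0$), and $|c(z,y^1,z)|\le C|z|^{1/2}\|z\|^{3/2}\|y^1\|$ by the last inequality in Lemma~\ref{trilinear}, which after Young's inequality is absorbed into $\nu\|z\|^2$ plus a term $C\|y^1\|^4|z|^2$; since $\|y^1\|\in L^\infty(0,T)$, Gr\"onwall forces $z\equiv 0$. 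The main obstacle I anticipate is not any single estimate but making the $y_{m,t}$-test rigorous at the Galerkin level (it is legitimate since $y_{m,t}\in V_m$) and then correctly tracking the constant's dependence on $M$ through the two nested Gr\"onwall arguments so as to land exactly on the form \eqref{ST2.1}; the nonlinear limit passage and the pressure recovery are standard given the Voigt regularization.
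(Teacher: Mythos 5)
Your proposal is correct, and it follows the standard Faedo--Galerkin route with the two energy estimates (testing with $y_m$ and with $y_{m,t}$, the second being where the Voigt term $\alpha^2\|y_{m,t}\|^2$ does the work) that is exactly the strategy behind this result; note that the paper itself does not prove Theorem~\ref{tontaiNSV} but cites it from \cite{AT2013}, and its own sketch for the analogous strong-solution statement (Theorem~\ref{THR2.2}) proceeds by the same Galerkin-plus-a-priori-estimates scheme. The only cosmetic slip is attributing the bound $|c(z,y^1,z)|\le C|z|^{1/2}\|z\|^{3/2}\|y^1\|$ to the last inequality of Lemma~\ref{trilinear}; it actually follows from $c(z,y^1,z)=b(z,y^1,z)$ for $z\in V$ together with the fifth $b$-inequality, and in any case the listed $c$-inequality gives $C|z|^{1/4}\|z\|^{7/4}\|y^1\|$, which serves the same Gr\"onwall purpose.
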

The existence of strong solutions to problem \eqref{NSV1} is given by the following theorem.
\begin{theorem}
\label{THR2.2}
If $y_0\in D(A)$ and $u\in L^2(0,T;\mathbb{L}^2(\Omega)$ then the unique weak solution $(y,p)$ of \eqref{NSV1} belongs to $W^{1,2}(0,T;D(A))\times L^2(0,T;H^1(\Omega)\cap L^2_0(\Omega))$. Moreover, if $\|u\|_{L^2(0,T;\mathbb{L}^2(\Omega))}\le M$ then
\begin{equation}\label{Pre1.0}
\|y\|_{W^{1,2}(0,T;D(A))}+\|p\|_{L^2(0,T;H^1(\Omega))}\le C_M,\end{equation}
where $C_M$ is a constant depending on $M$.
\end{theorem}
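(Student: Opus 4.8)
The weak solution $(y,p)$ is already furnished by Theorem \ref{tontaiNSV}; the task is only to bootstrap its regularity from $W^{1,2}(0,T;V)\times L^2(0,T;L^2_0(\Omega))$ up to $W^{1,2}(0,T;D(A))\times L^2(0,T;H^1(\Omega)\cap L^2_0(\Omega))$. The plan is to run a Faedo--Galerkin scheme in the eigenfunctions $\{w_j\}_{j\ge 1}$ of the Stokes operator $A$ (so that $w_j\in D(A)$, $Aw_j=\lambda_j w_j$, and $\{w_j\}$ is orthogonal in each of $H$, $V$ and $D(A)$), to establish $D(A)$-bounds on the approximations $y_n=\sum_{j=1}^{n}g_j(t)w_j$ that are uniform in $n$, and then to pass to the limit, identifying the limit with $y$ through the uniqueness assertion of Theorem \ref{tontaiNSV}. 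The pressure would then be recovered from the equation by a de Rham-type argument.

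The core step is the a priori estimate in $L^\infty(0,T;D(A))\cap L^2(0,T;D(A))$. First, the same energy estimate that proves Theorem \ref{tontaiNSV}, carried out at the Galerkin level, gives $\|y_n\|_{L^\infty(0,T;V)}\le C_M$ uniformly in $n$. Since $w_j\in D(A)$, the function $Ay_n$ lies in the Galerkin space and is therefore an admissible test function; using $(y_{n,t},Ay_n)=\tfrac12\tfrac{d}{dt}\|y_n\|^2$, $a(y_n,Ay_n)=|Ay_n|^2$ and $a(y_{n,t},Ay_n)=\tfrac12\tfrac{d}{dt}|Ay_n|^2$, testing the $n$-th Galerkin equation with $Ay_n$ yields
\begin{equation*}
\tfrac12\tfrac{d}{dt}\big(\|y_n\|^2+\alpha^2|Ay_n|^2\big)+\nu|Ay_n|^2=-b(y_n,y_n,Ay_n)+(u,Ay_n).
\end{equation*}
By \eqref{b4} one has $|b(y_n,y_n,Ay_n)|\le C\|y_n\|^{3/2}|Ay_n|^{3/2}$, so Young's inequality bounds the right-hand side by $\tfrac{\nu}{2}|Ay_n|^2+C(\nu)\|y_n\|^6+\tfrac1\nu|u|^2$. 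Here the crucial point is that the bound $\|y_n\|_{L^\infty_tV}\le C_M$ turns the superlinear term into something linear in $\phi_n:=\|y_n\|^2+\alpha^2|Ay_n|^2$, namely $\|y_n\|^6\le C_M^4\|y_n\|^2\le\alpha^{-2}C_M^4\phi_n$, so Gronwall's lemma applied to $\phi_n$, together with $\phi_n(0)\le C\|y_0\|_{D(A)}^2$ and $\|u\|_{L^2(0,T;\mathbb{L}^2(\Omega))}\le M$, gives $\|y_n\|_{L^\infty(0,T;D(A))}+\|y_n\|_{L^2(0,T;D(A))}\le C_M$ uniformly in $n$.

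For the time derivative, the Galerkin equation in projected form reads $(I+\alpha^2 A)y_{n,t}=P_n\big(u-\nu Ay_n-B(y_n,y_n)\big)$, where $P_n$ is the $H$-orthogonal projection onto the Galerkin space and $B(u,v)$ is defined by $(B(u,v),w)=b(u,v,w)$; since $A$ and $(I+\alpha^2 A)^{-1}$ commute with $|A(I+\alpha^2 A)^{-1}f|\le\alpha^{-2}|f|$, and $|B(y_n,y_n)|\le C\|y_n\|^{3/2}|Ay_n|^{1/2}$ by \eqref{b4}, we would get $|Ay_{n,t}|\le C\big(|u|+|Ay_n|+\|y_n\|^{3/2}|Ay_n|^{1/2}\big)$, which integrates---using the previous step---to $\|y_{n,t}\|_{L^2(0,T;D(A))}\le C_M$. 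Passing to weak/weak-$*$ limits along a subsequence and using uniqueness, the limit is $y$, and weak lower semicontinuity gives $\|y\|_{W^{1,2}(0,T;D(A))}\le C_M$. Finally, \eqref{ST1} yields $\nabla p=u-y_t+\nu\Delta y+\alpha^2\Delta y_t-(y\cdot\nabla)y$ in $\mathcal{D}'(Q)$, and each term on the right is in $L^2(0,T;\mathbb{L}^2(\Omega))$: $y_t\in L^2(0,T;D(A))$; $\Delta y,\Delta y_t\in L^2(0,T;\mathbb{L}^2(\Omega))$ because $D(A)=\mathbb{H}^2(\Omega)\cap V$; and $(y\cdot\nabla)y\in L^\infty(0,T;\mathbb{L}^2(\Omega))$ because in dimension three $y\in L^\infty(0,T;\mathbb{H}^2(\Omega))\hookrightarrow L^\infty(0,T;\mathbb{L}^\infty(\Omega))$ while $\nabla y\in L^\infty(0,T;\mathbb{L}^2(\Omega))$. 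Hence $\nabla p\in L^2(0,T;\mathbb{L}^2(\Omega))$, and as $p(t)\in L^2_0(\Omega)$ a.e., the Poincar\'e--Wirtinger inequality gives $p\in L^2(0,T;H^1(\Omega)\cap L^2_0(\Omega))$ with $\|p\|_{L^2(0,T;H^1(\Omega))}\le C\|\nabla p\|_{L^2(0,T;\mathbb{L}^2(\Omega))}\le C_M$, which is \eqref{Pre1.0}.

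The hard part will be closing the $D(A)$ estimate of the second paragraph: the trilinear term produces the superlinear quantity $\|y_n\|^6$, which in three dimensions is exactly the obstruction to global regularity for the Navier--Stokes equations. What saves the day is the \emph{already available} global bound $y\in L^\infty(0,T;V)$ from Theorem \ref{tontaiNSV}---itself a consequence of the Voigt regularizing term $\alpha^2\Delta y_t$---which renders the offending term linear in $\phi_n$ and thus amenable to Gronwall. A secondary, purely technical, point is the pressure recovery, where the $C^2$ regularity of $\partial\Omega$ enters through both $D(A)=\mathbb{H}^2(\Omega)\cap V$ and the de Rham/Poincar\'e--Wirtinger argument.
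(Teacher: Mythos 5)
Your proposal is correct, and its backbone coincides with the paper's proof: a Galerkin scheme, the preliminary $L^\infty(0,T;V)$ bound, and then the key $L^\infty(0,T;D(A))$ estimate obtained by testing with $Ay$, where \eqref{b4} together with the already-known $V$-bound tames the cubic term so that Gronwall closes (the paper absorbs one factor of $\|y\|$ into the constant before applying Young's inequality, you apply Young first and then bound $\|y_n\|^6\le C_M^4\|y_n\|^2$; these are the same estimate). Where you genuinely diverge is the bound on $y_t$: the paper tests the equation with $Ay_t$ and integrates in time, extracting $\alpha^2\int_0^T|Ay_t|^2$ from the Voigt term on the left, whereas you invert the elliptic operator, writing $(I+\alpha^2A)y_{n,t}=P_n(u-\nu Ay_n-B(y_n,y_n))$ and using $|A(I+\alpha^2A)^{-1}f|\le\alpha^{-2}|f|$ to read off a pointwise-in-time bound $|Ay_{n,t}|\le C(|u|+|Ay_n|+\|y_n\|^{3/2}|Ay_n|^{1/2})$. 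Your route exploits the structure of the Voigt term more explicitly and in fact yields the slightly stronger conclusion that $|Ay_{n,t}(t)|$ is controlled pointwise by $|u(t)|$ plus quantities bounded in $L^\infty(0,T)$, at the mild cost of requiring the eigenfunction basis of the Stokes operator so that $P_n$ commutes with $A$; the paper's energy argument is basis-agnostic but only gives the $L^2$-in-time bound. On the pressure you are more explicit than the paper (which simply invokes the regularity of $\nabla p=f$), and your term-by-term verification that the right-hand side lies in $L^2(0,T;\mathbb{L}^2(\Omega))$, followed by Poincar\'e--Wirtinger on $L^2_0(\Omega)$, is a correct way to fill in that step.
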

\begin{proof}
The proof is standard by using the Galerkin method, so we only present here some {\it a priori} estimates. Taking $w=y(s)$ in the first equation of \eqref{ST2}, then integrating from $0$ to $t$ we get
\begin{multline}
\label{Pre1.5}
\int^{t}_{0}(y_t(s),y(s))ds + \nu\int^{t}_{0}\|y(s)\|^2ds +\alpha^2\int^{t}_{0}((y_t(s),y(s)))ds\\
 =\int^{t}_{0}(u(s),y(s))ds.
\end{multline}
Here we have used the identity $c(y(s),y(s),y(s))=0$. The right-hand side can be estimated by
\begin{align*}
\left |\int_0^t (u(s),y(s))ds\right | &\le \int_0^t |u(s)|.|y(s)|ds\le C\int_0^t\|y(s)\|.|u(s)|ds\\
&\le \dfrac{\nu}{2}\int_0^t \|y(s)\|^2ds +\dfrac{C^2}{2\nu}\int_0^t |u(s)|^2ds,
\end{align*}
where $C$ depends only on $\Omega$. By integrating by parts, we get from \eqref{Pre1.5} that
\begin{equation*}
\begin{aligned}
|y(t)|^2+\nu \int^{t}_{0}\|y(s)\|^2ds+\alpha^2\|y(t)\|^2 
&\le |y(0)|^2+\alpha^2\|y(0)\|^2+\dfrac{C^2}{\nu}\int^{t}_{0}|u(s)|^2ds\\
&\le |y_0|^2+\alpha^2\|y_0\|^2+\dfrac{C^2}{\nu}\|u\|^2_{\mathbb{L}^2(Q)}.
\end{aligned}
\end{equation*}
So, $y$ belongs to $L^\infty(0,T; V)$ and
\begin{equation}\label{reg1.2}
\|y\|^2_{L^\infty(0,T;V)}\le C(\|y_0\|^2+\|u\|^2_{\mathbb{L}^2(Q)}).
\end{equation}
Now, taking $w=Ay(s)$ in the first equation of \eqref{ST2}, then integrating from $0$ to $t$ we have
\begin{multline}\label{reg1}
\dfrac{1}{2}\|y(t)\|^2-\dfrac{1}{2}\|y_0\|^2 + \nu\int_0^t|A y(s)|^2ds+\dfrac{\alpha^2}{2}|A y(t)|^2-\dfrac{\alpha^2}{2}|A y_0|^2\\
=-\int_0^tb(y(s),y(s),A y(s))ds + \int_0^t(u(s),A y(s))ds.
\end{multline}
We have
\begin{equation}\label{reg1.1}
\left|\int_0^t(u(s),A y(s))ds\right|\le \int_0^t|u(s)||A y(s)|ds \le \dfrac{\nu}{2}\int_0^t|A y(s)|^2ds + C\int_0^t|u(s)|^2ds.
\end{equation}
From inequality \eqref{b4} and the fact that $y\in L^\infty(0,T; V)$ we deduce that
\begin{equation}\label{reg1.0}
\begin{aligned}
\left|\int_0^t b(y(s),y(s),A y(s))ds \right|&\le C\int_0^t\|y(s)\|\|y(s)\|^{1/2}|A y(s)|^{1/2}|A y(s)| ds.\\
&\le C\int_0^t\|y(s)\|^{1/2} |A y(s)|^{3/2}ds\\
&\le \dfrac{\nu}{2}\int_0^t|A y(s)|^2ds + C\int_0^t\|y(s)\|^2ds\\
&\le \dfrac{\nu}{2}\int_0^t|A y(s)|^2ds + C\|y\|^2_{L^2(0,T;V)}.
\end{aligned}
\end{equation}
Using the estimates above, \eqref{reg1} leads to
\begin{equation*}\label{reg2}
|A y(t)|^2\le C\left(|A y_0|^2+\int_0^T |u(s)|^2ds\right)\quad \text{for every } t\in (0,T).
\end{equation*}
Thus we obtain $y\in L^\infty (0,T;D(A))$ and
\begin{equation}\label{reg2.0}
\|y\|^2_{L^\infty(0,T;D(A))}\le C\left(|Ay_0|^2+\int_0^T |u(s)|^2ds\right).
\end{equation}
Now, taking $w=Ay_t(s)$ in the first equation of \eqref{ST2}, then integrating from $0$ to $T$  we have
\begin{multline}
\label{reg3}
\int_0^T\|y_t(s)\|^2ds +\dfrac{\nu}{2}(|Ay(T)|^2-|Ay_0|^2)+\alpha^2\int_0^T|Ay_t(s)|^2ds\\
 =\int_0^T(u(s),Ay_t(s))ds - \int_0^Tb(y(s),y(s),Ay_t(s))ds.
\end{multline}
Using the same arguments as in \eqref{reg1.1} and \eqref{reg1.0} we get 
\begin{align*}
\left|\int_0^T(u(s),A y_t(s))ds\right| &\le \dfrac{\alpha^2}{3}\int_0^T|A y_t(s)|^2ds + C\int_0^T|u(s)|^2ds,\\
\left|\int_0^T b(y(s),y(s),A y_t(s))ds \right| &\le \dfrac{\alpha^2}{3}\int_0^T|A y_t(s)|^2ds + C\|y\|^2_{L^2(0,T;V)}.
\end{align*}
From these estimates and \eqref{reg1.2}, \eqref{reg3} we obtain
$$\int_0^T |Ay_t(s)|^2ds \le C\left(|Ay_0|^2 + \int_0^T|u(s)|^2ds\right).$$
This means that $y_t\in L^2(0,T;D(A))$. Using this estimate and \eqref{reg2.0} we get that
$$\|y\|_{W^{1,2}(0,T;D(A))}\le C_M.$$
The conclusion that $p\in L^2(0,T;H^1(\Omega))$ and that $p$ is bounded in $L^2(0,T;H^1(\Omega))$ by a constant depending on $M$ are direct consequences of the regular properties of the equation $\nabla p =f.$
\end{proof}
Denote by $G$ the control-to-state mapping:
\begin{align*}
G:L^2(0,T;\mathbb{L}^2(\Omega))&\to W^{1,2}(0,T;V)\\
u&\mapsto G(u):=y_u, \text{ the unique solution of \eqref{ST2}.}
\end{align*}
From now on, for convenience, we sometimes write $J$ as a functional of control variable $u$ as follows $J(u):=J(G(u), u)$.
\begin{theorem}
\label{THR2.3}
If $y_0\in D(A)$ and $u_n\rightharpoonup u$ in the space $L^2(0,T;\mathbb{L}^2(\Omega))$, then from the sequence $\{y_{u_n}\}_n$ we can extract a subsequence, denoted in the same way, such that $y_{u_n}\rightharpoonup y_u$ in the space $W^{1,2}(0,T;D(A))$. 
\end{theorem}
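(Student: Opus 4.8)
The plan is to use the standard compactness--weak-continuity argument for the control-to-state map. Since $u_n\rightharpoonup u$ in $L^2(0,T;\mathbb{L}^2(\Omega))$, the sequence $\{u_n\}$ is bounded in this space, say by a constant $M$, by the uniform boundedness principle. Theorem \ref{THR2.2} then gives $\|y_{u_n}\|_{W^{1,2}(0,T;D(A))}\le C_M$ for every $n$. As $W^{1,2}(0,T;D(A))$ is a Hilbert space, I would extract a subsequence, not relabeled, and find $y\in W^{1,2}(0,T;D(A))$ with $y_{u_n}\rightharpoonup y$ in $W^{1,2}(0,T;D(A))$; in particular $y_{u_n}\rightharpoonup y$ and $y_{u_n,t}\rightharpoonup y_t$ in $L^2(0,T;D(A))$, hence also weakly in $L^2(0,T;V)$.

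To handle the convective term I would first upgrade this to strong convergence. The uniform bound makes $\{y_{u_n}\}$ bounded in $L^\infty(0,T;D(A))$ (via $W^{1,2}(0,T;D(A))\hookrightarrow C([0,T];D(A))$) and $\{y_{u_n,t}\}$ bounded in $L^2(0,T;D(A))\subset L^2(0,T;H)$. Since $D(A)\hookrightarrow V\hookrightarrow H$ with the first embedding compact, an Aubin--Lions--Simon argument (equivalently, the compact embeddings recalled in Section 2 applied to $\{y_{u_n}\}$, combined with the interpolation inequality $\|\cdot\|\le C|\cdot|^{1/2}\|\cdot\|_{D(A)}^{1/2}$ and the uniform $L^\infty(0,T;D(A))$ bound) yields, after passing to a further subsequence, $y_{u_n}\to y$ strongly in $C([0,T];H)$ and in $L^2(0,T;V)$.

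Then I would pass to the limit in \eqref{ST2}. Fix $w\in V$ and $\varphi\in C_c^\infty(0,T)$, multiply the first equation of \eqref{ST2} for $(y_{u_n},u_n)$ by $\varphi(s)$, and integrate over $(0,T)$. The terms $\int_0^T\varphi(y_{u_n,t},w)\,ds$, $\nu\int_0^T\varphi\,a(y_{u_n},w)\,ds$, $\alpha^2\int_0^T\varphi\,a(y_{u_n,t},w)\,ds$ and $\int_0^T\varphi(u_n,w)\,ds$ converge to the corresponding limits thanks to the weak convergences $y_{u_n}\rightharpoonup y$, $y_{u_n,t}\rightharpoonup y_t$ in $L^2(0,T;V)$ and $u_n\rightharpoonup u$ in $L^2(0,T;\mathbb{L}^2(\Omega))$. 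For the nonlinear term, writing $c(u,v,w)=\frac12[b(u,v,w)-b(u,w,v)]$ and
\[
b(y_{u_n},y_{u_n},w)-b(y,y,w)=b(y_{u_n}-y,y_{u_n},w)+b(y,y_{u_n}-y,w),
\]
I would bound both pieces by the trilinear estimate $|b(u,v,w)|\le C\|u\|\|v\|\|w\|$ of Lemma \ref{trilinear}; together with the uniform bound in $L^\infty(0,T;V)$ and the strong convergence $y_{u_n}\to y$ in $L^2(0,T;V)$, this shows $\int_0^T\varphi\,[c(y_{u_n},y_{u_n},w)-c(y,y,w)]\,ds\to 0$. Hence $y$ satisfies \eqref{ST2} for the control $u$, and $y_{u_n}(0)=y_0$ with $y_{u_n}\to y$ in $C([0,T];H)$ gives $y(0)=y_0$. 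Since $y_0\in D(A)$, Theorem \ref{THR2.2} asserts that \eqref{ST2} has a unique strong solution for the control $u$, so $y=y_u$, which is the claim; the limit being independent of the chosen subsequence, the whole sequence in fact converges.

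The step I expect to be the main obstacle is the compactness upgrade from weak to strong convergence required to pass to the limit in the convective term $c(y_{u_n},y_{u_n},w)$; all other terms are linear and pass to the limit using weak convergence alone. This is precisely where the strong-solution regularity provided by Theorem \ref{THR2.2}, i.e. the hypothesis $y_0\in D(A)$ together with the resulting uniform bound in $W^{1,2}(0,T;D(A))$, is used in an essential way.
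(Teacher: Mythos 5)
Your argument is correct and follows the same route as the paper's (much terser) proof: uniform boundedness in $W^{1,2}(0,T;D(A))$ via Theorem \ref{THR2.2}, weak sequential compactness, and passage to the limit in \eqref{ST2} with uniqueness identifying the limit as $y_u$. The paper merely states that one can ``easily pass to the limit,'' whereas you supply the Aubin--Lions compactness step needed for the convective term; this is the intended (and correct) way to fill that gap.
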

\begin{proof}
It follows from Theorem \ref{THR2.2} that the sequence $\{y_{u_n}\}_n$ is bounded in the space $W^{1,2}(0,T;D(A))$. Hence we can extract a subsequence converging weakly to some $y\in W^{1,2}(0,T;D(A))$. Then we can easily pass to the limit in \eqref{ST2} to show that $y\equiv y_u$.
\end{proof}
\subsection{Optimality conditions}
Now, we restate the results obtained in \cite{AN2016} when the set of admissible controls is the box $U_{\alpha,\beta}$. 
\begin{theorem}
The mapping $G$ is of class $C^2$. If we set $z_v:=G'(u)v$, $z_{vv}=G''(u)v^2$ then $z_v,z_{vv}$ are respectively the unique solutions of the following equations:
\begin{equation*}
\begin{cases}
(z_{vt},w)+\nu a(z_v,w)+\alpha^2a(z_{vt},w)+c(z_v,y_u,w)+c(y_u,z_v,w)=(v,w),\quad\forall w\in V,\\
z_v(0)=0,
\end{cases}
\end{equation*}
\begin{equation*}
\begin{cases}
(z_{vvt},w)+\nu a(z_{vv},w)+\alpha^2a(z_{vvt},w)+c(z_{vv},y_u,w)+c(y_u,z_{vv},w)\\
\hspace{6cm} =-2c(z_v,z_v,w),\quad\forall w\in V,\\
z_{vv}(0)=0.
\end{cases}
\end{equation*}
Furthermore, if $\|y_u\|_{W^{1,2}(0,T;V)}\le M$ then 
\begin{equation}
\label{DGE2.1}
\|z_v\|_{W^{1,2}(0,T;V)}\le C_M\|v\|_{L^2(0,T;\mathbb{L}^2(\Omega))},
\end{equation}
where $C_M$ is a constant depending on $M$.
\end{theorem}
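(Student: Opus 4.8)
The plan is to realize $G$ through the implicit function theorem applied to the operator encoding the weak formulation \eqref{ST2}, and then to obtain the two linearized systems together with the bound \eqref{DGE2.1} by differentiating the defining identity and running energy estimates.

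\textbf{Step 1 (abstract setting).} Put $Y:=W^{1,2}(0,T;V)$, $\,\mathcal U:=L^2(0,T;\mathbb L^2(\Omega))$ and $Z:=L^2(0,T;V')\times V$. Since $I+\alpha^2A:V\to V'$ is an isomorphism (Lax--Milgram, using coercivity of $(u,v)+\alpha^2((u,v))$ on $V$), the map $y\mapsto (I+\alpha^2A)y_t+\nu Ay$ is bounded linear from $Y$ to $L^2(0,T;V')$; and by Lemma \ref{trilinear} (the estimate $|c(u,v,w)|\le C\|u\|\|v\|\|w\|$) together with the embedding $W^{1,2}(0,T;V)\hookrightarrow C([0,T];V)$, the quadratic map $B:Y\to L^2(0,T;V')$ defined by $\langle B(y)(s),w\rangle:=c(y(s),y(s),w)$ is well defined and bounded, hence real-analytic. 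Define
\[
\mathcal F:Y\times\mathcal U\to Z,\qquad \mathcal F(y,u):=\bigl((I+\alpha^2A)y_t+\nu Ay+B(y)-u,\ y(0)-y_0\bigr).
\]
Then $\mathcal F$ is of class $C^\infty$ (affine in $u$, quadratic in $y$), and by Theorem \ref{tontaiNSV} the equation $\mathcal F(y,u)=0$ has, for each $u\in\mathcal U$, the unique solution $y=y_u=G(u)$.

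\textbf{Step 2 (invertibility of $D_y\mathcal F$).} Fix $(y,u)$ with $\|y\|_Y\le M$. One computes
\[
D_y\mathcal F(y,u)z=\bigl((I+\alpha^2A)z_t+\nu Az+c(z,y,\cdot)+c(y,z,\cdot),\ z(0)\bigr),
\]
and I would prove that for every $(f,z_0)\in Z$ this linear problem has a unique solution $z\in Y$ satisfying $\|z\|_{W^{1,2}(0,T;V)}\le C_M\bigl(\|f\|_{L^2(0,T;V')}+\|z_0\|_V\bigr)$. Existence follows by the Galerkin scheme exactly as in Theorem \ref{THR2.2}; for the a priori bound, test with $z(s)$: the term $c(y,z,z)$ vanishes, while $c(z,y,z)$ is handled by the interpolation inequality $|c(u,v,w)|\le C|u|^{1/4}\|u\|^{3/4}\|v\|\|w\|$ of Lemma \ref{trilinear} and Young, giving $|c(z,y,z)|\le\tfrac\nu2\|z\|^2+C|z|^2\|y\|^{8}$; since $y\in C([0,T];V)$, Gr\"onwall's lemma controls $|z(t)|^2+\alpha^2\|z(t)\|^2+\nu\int_0^t\|z\|^2\,ds$ with a constant depending only on $M$. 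Feeding this back into $(I+\alpha^2A)z_t=f-\nu Az-c(z,y,\cdot)-c(y,z,\cdot)$ and using boundedness of $(I+\alpha^2A)^{-1}:V'\to V$ together with $|c(u,v,w)|\le C\|u\|\|v\|\|w\|$ bounds $\|z_t\|_{L^2(0,T;V)}$. Uniqueness is the same estimate with $(f,z_0)=0$. Hence $D_y\mathcal F(y,u):Y\to Z$ is a bounded linear bijection, so an isomorphism with $\|D_y\mathcal F(y,u)^{-1}\|\le C_M$.

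\textbf{Step 3 (differentiation).} By the implicit function theorem, near each $u$ the equation $\mathcal F(y,u)=0$ defines a $C^\infty$ map into $Y$, which by uniqueness coincides with $G$; hence $G\in C^2$ on $\mathcal U$. Differentiating $\mathcal F(G(u),u)=0$ once, with $D_u\mathcal F(y,u)v=(-v,0)$, yields $D_y\mathcal F(y_u,u)\,G'(u)v=(v,0)$, i.e. $z_v=G'(u)v$ solves the first linearized system with $z_v(0)=0$; the estimate of Step 2 with $f=v$, $z_0=0$ (and $\|v\|_{L^2(0,T;V')}\le C\|v\|_{\mathcal U}$) gives \eqref{DGE2.1}. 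Differentiating twice and using that $\mathcal F$ is affine in $u$ (so $D_{uu}\mathcal F=D_{yu}\mathcal F=0$) while $D_{yy}\mathcal F(y,u)(z,z)=(2c(z,z,\cdot),0)$, we get $D_y\mathcal F(y_u,u)\,G''(u)v^2=(-2c(z_v,z_v,\cdot),0)$, i.e. $z_{vv}=G''(u)v^2$ solves the second linearized system with $z_{vv}(0)=0$. Uniqueness of both systems is again the bijectivity of $D_y\mathcal F(y_u,u)$.

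\textbf{Main obstacle.} The single delicate point is the linearized energy estimate of Step 2 with a constant depending only on $M=\|y_u\|_{W^{1,2}(0,T;V)}$: the term $c(z,y,z)$ is not sign-definite and must be absorbed via the sharp interpolation inequality and Gr\"onwall; the rest is a routine translation of the arguments behind Theorem \ref{THR2.2}. (Alternatively one may bypass the implicit function theorem and verify Fr\'echet differentiability directly by estimating the difference quotients $\tfrac1t\bigl(G(u+tv)-G(u)\bigr)-z_v$ in $Y$, but this still rests on the same linearized estimate.)
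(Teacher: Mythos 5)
Your proposal is correct. Note that the paper itself offers no proof of this statement: it is restated from the authors' earlier work \cite{AN2016}. Your implicit-function-theorem strategy — writing the weak formulation as $\mathcal F(y,u)=0$ on $W^{1,2}(0,T;V)\times L^2(0,T;\mathbb L^2(\Omega))$, checking that $D_y\mathcal F$ is an isomorphism via a linearized energy estimate in which $c(y,z,z)=0$ and $c(z,y,z)$ is absorbed by interpolation and Gr\"onwall, and then reading off the first and second linearized systems — is exactly the scheme the paper employs for the discrete counterpart in Theorem \ref{THRDiffG}, and the key invertibility step is made possible by the Voigt term, since $(I+\alpha^2A)^{-1}:V'\to V$ lets you recover $z_t\in L^2(0,T;V)$ and hence close the argument in $W^{1,2}(0,T;V)$; the constant in \eqref{DGE2.1} depends only on $M$ through $\|y_u\|_{C([0,T];V)}\le C\|y_u\|_{W^{1,2}(0,T;V)}$, as required.
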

\begin{theorem}
The cost functional $J:L^2(0,T;\mathbb{L}^2(\Omega))\to \mathbb{R}$ is of class $C^2$. The first and second order derivatives of $J$ are given by
\begin{align*}
J'(u)v= &\int_0^T\int_\Omega (\lambda+\gamma u)vdxdt,\\
J''(u)v^2=&\alpha_T\int_\Omega|z_v(T)|^2dx+\alpha_Q\int_0^T\int_\Omega|z_v|^2dxdt+\gamma\int_0^T\int_\Omega|v|^2dxdt\\
&-2\int_0^Tc(z,z,\lambda)dt,
\end{align*}
where $\lambda$ is a weak solution of the following system
\begin{equation}
\label{DGE3}
\begin{cases}
-\lambda_t- \nu\Delta\lambda +\alpha^2\Delta\lambda _t-(y_u\cdot\nabla)\lambda + (\nabla y_u)^T\lambda+\nabla q= \alpha _Q(y_u-y_Q), \;x\in \Omega, t>0,\\
\hfil \nabla \cdot  \lambda=0,\; x\in \Omega, t>0,\\
\hfil \lambda(x,t)=0, \; x\in\partial\Omega, t>0,\\
\hfil \lambda(T)-\alpha^2\Delta \lambda(T)+\nabla r=\alpha _T(y_u(T)-y_T),\; x\in\Omega.
\end{cases}
\end{equation}
\end{theorem}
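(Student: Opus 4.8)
The plan is to factor $J(u)=\mathcal J(G(u),u)$, where $\mathcal J(y,u)=\frac{\alpha_T}{2}|y(T)-y_T|^2+\frac{\alpha_Q}{2}\|y-y_Q\|_{\mathbb L^2(Q)}^2+\frac{\gamma}{2}\|u\|_{\mathbb L^2(Q)}^2$, and to exploit the smoothness of the two factors. Since $W^{1,2}(0,T;V)\hookrightarrow C([0,T];V)$ continuously, the trace $y\mapsto y(T)$ is bounded and linear from $W^{1,2}(0,T;V)$ into $V\subset\mathbb L^2(\Omega)$, and the embedding $W^{1,2}(0,T;V)\hookrightarrow\mathbb L^2(Q)$ is bounded and linear as well; hence $\mathcal J$ is a sum of a continuous quadratic form, a continuous affine form and a constant on $W^{1,2}(0,T;V)\times L^2(0,T;\mathbb L^2(\Omega))$, in particular of class $C^\infty$. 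Composing with the $C^2$ map $u\mapsto(G(u),u)$ (previous theorem) shows immediately that $J\in C^2$, and the chain rule gives, with $z_v=G'(u)v$,
\[
J'(u)v=\alpha_T\,(y_u(T)-y_T,z_v(T))+\alpha_Q\!\int_0^T(y_u-y_Q,z_v)\,dt+\gamma\!\int_0^T(u,v)\,dt .
\]

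Next I would eliminate the sensitivity $z_v$ in favour of the adjoint state $\lambda$. First one checks that \eqref{DGE3} is a linear backward problem of Navier--Stokes--Voigt type that is well posed: its terminal value is obtained by solving the elliptic Stokes-type problem $\lambda(T)-\alpha^2\Delta\lambda(T)+\nabla r=\alpha_T(y_u(T)-y_T)$, $\nabla\cdot\lambda(T)=0$, with $y_u(T)-y_T\in V$, hence $\lambda(T)\in D(A)$; its right-hand side $\alpha_Q(y_u-y_Q)$ lies in $L^2(0,T;\mathbb L^2(\Omega))$; and its variable coefficients $y_u,\nabla y_u$ are controlled in the relevant norms by $\|y_u\|_{W^{1,2}(0,T;D(A))}$ through Theorem~\ref{THR2.2}; running the Galerkin/energy scheme of Theorem~\ref{THR2.2} backward in time then produces a unique $\lambda\in W^{1,2}(0,T;V)$ together with $q\in L^2(0,T;H^1(\Omega))$. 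I would then test the linearized state equation for $z_v$ with $\lambda$, test the weak form of \eqref{DGE3} with $z_v$, integrate the terms $(z_{vt},\lambda)$ and $\alpha^2a(z_{vt},\lambda)$ by parts in time, and subtract the two identities. Because $z_v(0)=0$ there is no contribution at $t=0$; the contribution at $t=T$, namely $(\lambda(T),z_v(T))+\alpha^2a(\lambda(T),z_v(T))$, is rewritten via the terminal condition of \eqref{DGE3} as $\alpha_T(y_u(T)-y_T,z_v(T))$ (the pressure term drops because $z_v(T)\in V$); and the viscous and convective contributions cancel because \eqref{DGE3} is, by construction, the formal adjoint of the linearized operator — here the antisymmetry $c(u,v,w)=-c(u,w,v)$ of Lemma~\ref{trilinear} identifies $-(y_u\cdot\nabla)\lambda$ and $(\nabla y_u)^{T}\lambda$ as the adjoints of $c(y_u,\cdot,\cdot)$ and $c(\cdot,y_u,\cdot)$. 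What survives is the duality identity
\[
\alpha_T\,(y_u(T)-y_T,z_v(T))+\alpha_Q\!\int_0^T(y_u-y_Q,z_v)\,dt=\int_0^T(\lambda,v)\,dt ,
\]
which, substituted above, gives $J'(u)v=\int_0^T\!\int_\Omega(\lambda+\gamma u)v\,dx\,dt$.

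For $J''$ I would differentiate $v\mapsto J'(u)v$ once more in the direction $v$. With $z_{vv}=G''(u)v^2$, differentiating the three terms of the pre-adjoint expression for $J'(u)v$ yields $\alpha_T|z_v(T)|^2+\alpha_Q\int_0^T|z_v|^2\,dt+\gamma\int_0^T|v|^2\,dt$ plus the extra terms $\alpha_T(y_u(T)-y_T,z_{vv}(T))+\alpha_Q\int_0^T(y_u-y_Q,z_{vv})\,dt$. Since $z_{vv}$ satisfies the same linearized state equation as $z_v$ but with forcing $-2c(z_v,z_v,\cdot)$ — which lies in $L^2(0,T;V')$ because $z_v\in C([0,T];V)$ — repeating verbatim the duality computation of the preceding paragraph, now with $z_{vv}$ and the forcing $-2c(z_v,z_v,\cdot)$ in the roles of $z_v$ and $(v,\cdot)$ and using $z_{vv}(0)=0$ and the same terminal condition, turns these extra terms into $-2\int_0^T c(z_v,z_v,\lambda)\,dt$. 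Collecting everything gives precisely the stated formula for $J''(u)v^2$ (in which $z$ in the last term is $z_v$).

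The step I expect to be the main obstacle is the duality argument of the second paragraph, for two reasons. First, one must secure enough regularity of $\lambda$ (and of $z_v,z_{vv}$) to justify every time integration by parts and every trilinear pairing: it is exactly the Voigt regularization $\alpha^2\Delta\lambda_t$ that makes $\lambda_t\in L^2(0,T;V)$ (and likewise $z_v,z_{vv}\in W^{1,2}(0,T;V)$), and it is the strong-solution bound $\|y_u\|_{W^{1,2}(0,T;D(A))}\le C_M$ from Theorem~\ref{THR2.2} that keeps the variable coefficients of \eqref{DGE3} under control. Second, one must verify carefully that the two convective terms in \eqref{DGE3} are the exact adjoints of the linearized convective terms $c(z_v,y_u,\cdot)$ and $c(y_u,z_v,\cdot)$, so that all cross terms cancel under the subtraction and no spurious space-boundary terms remain. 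Everything else — the chain rule and the a priori energy estimates — is already present in the proof of Theorem~\ref{THR2.2} and in the preceding theorem.
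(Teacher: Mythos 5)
Your argument is correct: the chain-rule factorization $J=\mathcal J\circ(G,\mathrm{id})$, the duality computation pairing the linearized state equation with $\lambda$ and the adjoint system \eqref{DGE5} with $z_v$ (using $z_v(0)=0$ and the terminal condition to produce $\alpha_T(y_u(T)-y_T,z_v(T))$), and its repetition with $z_{vv}$ and the forcing $-2c(z_v,z_v,\cdot)$ yield exactly the stated formulas. The paper itself gives no proof of this theorem — it is quoted from \cite{AN2016} — but your route is the standard one used there, and the regularity $z_v,z_{vv},\lambda\in W^{1,2}(0,T;V)$ guaranteed by the Voigt term is indeed what justifies every integration by parts in time.
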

\begin{remark}{\rm 
\begin{itemize}
\item[i.] A triplet $(\lambda,q,r)\in W^{1,2}(0,T;V)\times L^2(0,T;L^2_0(\Omega))\times L^2_0(\Omega)$ is called a weak solution to the system \eqref{DGE3} on the interval $(0,T)$ if for every $w\in \mathbb{H}^1_0(\Omega)$, it fulfills
\begin{equation}
\label{DGE4}
\begin{cases}
-(\lambda_t,w)+\nu a(\lambda,w)-\alpha^2a(\lambda_t,w)+c(y_u,w,\lambda)+c(w,y_u,\lambda)+(q,{\rm div}\, w)\\
\hspace{9cm}=\alpha_Q(y_u-y_Q,w),\\
(\lambda(T),w)+\alpha^2a(\lambda(T),w)+(r,{\rm div}\,w)=\alpha_T(y_u(T)-y_T,w).
\end{cases}
\end{equation}
\item[ii.] If $(\lambda,q,r)\in W^{1,2}(0,T;V)\times L^2(0,T;L^2_0(\Omega))\times L^2_0(\Omega)$ is a weak solution of the system \eqref{DGE3} then for every $w\in V$, $\lambda$ satisfies the following equations
 \begin{equation}
\label{DGE5}
\begin{cases}
-(\lambda_t,w)+\nu a(\lambda,w)-\alpha^2a(\lambda_t,w)+c(y_u,w,\lambda)+c(w,y_u,\lambda)=\alpha_Q(y_u-y_Q,w),\\
(\lambda(T),w)+\alpha^2a(\lambda(T),w)=\alpha_T(y_u(T)-y_T,w).
\end{cases}
\end{equation}
Conversely, if $\lambda\in W^{1,2}(0,T;V)$ satisfies \eqref{DGE5} then there exists a unique pair $(q,r)\in L^2(0,T;L^2_0(\Omega))\times L^2_0(\Omega)$ such that \eqref{DGE4} holds for every $w\in \mathbb{H}^1_0(\Omega)$.
\item[iii.] The system \eqref{DGE3} has a unique weak solution $(\lambda,q,r)\in W^{1,2}(0,T;V)\times L^2(0,T;L^2_0(\Omega))\times L^2_0(\Omega)$. One can prove that this solution belongs to the space $W^{1,2}(0,T;D(A))\times L^2(0,T;H^1(\Omega)\cap L^2_0(\Omega))\times (H^1(\Omega)\cap L^2_0(\Omega)).$ Furthermore, if $\|u\|_{L^2(0,T;\mathbb{L}^2(\Omega))}\le M$ then 
$$\|\lambda\|_{W^{1,2}(0,T;D(A))}+\|q\|_{L^2(0,T;H^1(\Omega))}+\|r\|_{H^1(\Omega)}\le C_M,$$
where $C_M$ is a constant depending on $M$.
\end{itemize}}
\end{remark}
\begin{theorem}
Let $\bar{u}\in L^2(0,T;\mathbb{L}^2(\Omega))$ be a local optimal control with associated state $\bar{y}\in W^{1,2}(0,T;D(A))$. Then
\begin{equation*}
\int_0^T\int_\Omega (\bar{\lambda}+\gamma\bar{u})\cdot (v-\bar{u})dxdt\ge 0,\quad \forall v\in U_{\alpha, \beta},
\end{equation*}
where $\bar{\lambda}$ is the adjoint state, i.e. $(\bar{\lambda},\bar{q},\bar{r})$ is the unique weak solution of the following system on the interval $(0,T)$
\begin{equation*}
\begin{cases}
-\bar{\lambda}_t- \nu\Delta\bar{\lambda} +\alpha^2\Delta\bar{\lambda} _t-(\bar{y}\cdot\nabla)\bar{\lambda} + (\nabla \bar{y})^T\bar{\lambda}+\nabla \bar{q}= \alpha _Q(\bar{y}-y_Q), \;x\in \Omega, t>0,\\
\hfil \nabla \cdot  \bar{\lambda}=0,\; x\in \Omega, t>0,\\
\hfil \bar{\lambda}(x,t)=0, \; x\in\partial\Omega, t>0,\\
\hfil \bar{\lambda}(T)-\alpha^2\Delta \bar{\lambda}(T)+\nabla \bar{r}=\alpha _T(\bar{y}(T)-y_T),\; x\in\Omega.
\end{cases}
\end{equation*}
\end{theorem}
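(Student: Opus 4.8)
The plan is to obtain the stated variational inequality as the first-order necessary condition $J'(\bar u)(v-\bar u)\ge 0$, combined with the adjoint representation of $J'$ already recorded above. No new analytic machinery is needed: everything rests on the convexity of $U_{\alpha,\beta}$, the $C^2$ (hence $C^1$) regularity of the reduced functional $J$, and the existence, uniqueness and regularity of the adjoint state, all of which are available from the preceding theorems and the remark.

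First I would produce admissible variations. Fix an arbitrary $v\in U_{\alpha,\beta}$. Since $U_{\alpha,\beta}$ is convex and $\bar u\in U_{\alpha,\beta}$, the curve $u_t:=\bar u+t(v-\bar u)$ lies in $U_{\alpha,\beta}$ for every $t\in[0,1]$, and $\|u_t-\bar u\|_{L^2(0,T;\mathbb{L}^2(\Omega))}=t\,\|v-\bar u\|_{L^2(0,T;\mathbb{L}^2(\Omega))}\to 0$ as $t\to 0^{+}$. Because $\bar u$ is a local optimal control, there is $\rho>0$ with $J(\bar u)\le J(w)$ for all $w\in U_{\alpha,\beta}$ satisfying $\|w-\bar u\|_{L^2(0,T;\mathbb{L}^2(\Omega))}<\rho$; hence $J(\bar u)\le J(u_t)$ for all sufficiently small $t>0$, and therefore
\[
\frac{J(u_t)-J(\bar u)}{t}\ge 0\qquad\text{for all }\ 0<t\ll 1.
\]

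Next I would let $t\to 0^{+}$. Since $J:L^2(0,T;\mathbb{L}^2(\Omega))\to\mathbb{R}$ is of class $C^2$, it is in particular Gâteaux differentiable, so the difference quotient converges to $J'(\bar u)(v-\bar u)$, which gives $J'(\bar u)(v-\bar u)\ge 0$. It then remains to insert the formula for the first derivative of $J$ from the theorem above, $J'(u)w=\int_0^T\int_\Omega(\lambda+\gamma u)\cdot w\,dxdt$, taken at $u=\bar u$ and $w=v-\bar u$. By definition $\lambda$ is the weak solution of the adjoint system \eqref{DGE3} with $y_u$ replaced by the optimal state $\bar y=G(\bar u)$; denoting it $\bar\lambda$ (with associated pressures $\bar q,\bar r$), this is exactly the adjoint system written in the statement, and the existence, uniqueness and regularity of $(\bar\lambda,\bar q,\bar r)$ are guaranteed by item (iii) of the remark above. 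Substituting yields
\[
0\le J'(\bar u)(v-\bar u)=\int_0^T\int_\Omega(\bar\lambda+\gamma\bar u)\cdot(v-\bar u)\,dxdt,
\]
and since $v\in U_{\alpha,\beta}$ was arbitrary, the proof is finished.

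The only points that require (already-available) care are the admissibility of the radial variations $u_t$, which is precisely where convexity of $U_{\alpha,\beta}$ enters, and the differentiability of $J$ needed to pass to the limit in the difference quotient; there is no genuine obstacle here, since the substantive work — the adjoint representation of $J'$ and the well-posedness of the adjoint equation — was carried out in the cited results from \cite{AN2016}, and the present theorem merely repackages them into the Karush–Kuhn–Tucker-type form.
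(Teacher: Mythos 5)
Your proposal is correct and is the standard first-order necessary-condition argument: convex radial variations in $U_{\alpha,\beta}$, a nonnegative difference quotient from local optimality, passage to the limit via differentiability of $J$, and substitution of the adjoint representation of $J'$. The paper itself gives no proof here — it restates this result from \cite{AN2016} — and your argument coincides with the one carried out there, so there is nothing to add.
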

To state the second-order optimality conditions we define the cone of critical directions as follows
$$\mathcal{C}_{\bar{u}}=\left\{v\in L^2(0,T;\mathbb{L}^2(\Omega)): v \text{ satisfies } \eqref{DGE8}, \eqref{DGE9}, \eqref{DGE10} \right\}.$$
\begin{align}
&v_j(t,x)\ge 0 \text{ if } -\infty < \alpha_j=\bar{u}_j(t,x),\;j=1,2,3,\label{DGE8}\\
&v_j(t,x)\le 0 \text{ if } \bar{u}_j(t,x)=\beta_j<+\infty,\;j=1,2,3, \label{DGE9}\\
&v_j(t,x)=0 \text{ if } \bar{\lambda}+\gamma\bar{u}\ne 0,\;j=1,2,3.\label{DGE10}
\end{align}
The following theorem gives the second-order necessary and sufficient optimality conditions.
\begin{theorem}
If $\bar{u}$ be a local solution of problem $(P)$ then
$$J''(\bar{u})v^2\ge 0\;\;\forall v\in \mathcal{C}_{\bar{u}}.$$
Conversely, if $\bar{u}\in U_{\alpha, \beta}$ satisfies
\begin{align}
&J'(\bar{u})(u-\bar{u})\ge 0\;\; \forall u\in U_{\alpha, \beta},\notag\\
&J''(\bar{u})v^2>0\;\;\forall v\in \mathcal{C}_{\bar{u}}\backslash \{0\},\label{DGE12}
\end{align}
then there exist $\varepsilon>0$ and $\rho>0$ such that 
\begin{equation*}
J(u)\ge J(\bar{u})+\varepsilon\|u-\bar{u}\|^2_{L^2(0,T;\mathbb{L}^2(\Omega))}\;\;\forall u\in U_{\alpha, \beta}\cap \bar{B}_\rho(\bar{u}).
\end{equation*}
\end{theorem}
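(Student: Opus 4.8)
The plan is to prove the two assertions separately, in each case reducing matters to a second-order Taylor expansion of the $C^2$ functional $J$ together with the weak-to-strong continuity of the linearization $v\mapsto z_v$ supplied by estimate \eqref{DGE2.1} and by the compact embeddings $W^{1,2}(0,T;\mathbb{H}^1(\Omega))\hookrightarrow\mathbb{L}^2(Q)$ and $W^{1,2}(0,T;\mathbb{H}^1(\Omega))\hookrightarrow C([0,T];\mathbb{L}^2(\Omega))$. A preliminary remark used repeatedly: the first-order optimality condition is equivalent to the pointwise relations $\bar u_j(x,t)=\alpha_j$ wherever $(\bar\lambda+\gamma\bar u)_j(x,t)>0$ and $\bar u_j(x,t)=\beta_j$ wherever $(\bar\lambda+\gamma\bar u)_j(x,t)<0$; hence, for any $v$ obeying the sign conditions \eqref{DGE8}--\eqref{DGE9}, the integrand $(\bar\lambda+\gamma\bar u)_jv_j$ is nonnegative a.e., and if in addition \eqref{DGE10} holds it vanishes a.e., so that $J'(\bar u)v=\int_0^T\!\!\int_\Omega(\bar\lambda+\gamma\bar u)\cdot v\,dx\,dt=0$ for every $v\in\mathcal{C}_{\bar u}$.

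For the necessary condition, let $\bar u$ be a local solution and fix $v\in\mathcal{C}_{\bar u}$. For $t>0$ set $u^t:=\mathrm{Proj}_{U_{\alpha,\beta}}(\bar u+tv)$, i.e.\ $u^t_j=\max\{\alpha_j,\min\{\beta_j,\bar u_j+tv_j\}\}$, and $v^t:=(u^t-\bar u)/t$, so that $u^t\in U_{\alpha,\beta}$ and $u^t=\bar u+tv^t$. Using \eqref{DGE8}--\eqref{DGE9} one verifies that $v^t$ again satisfies \eqref{DGE8}--\eqref{DGE10}, that $|v^t_j|\le|v_j|$ a.e., and that $v^t_j(x,t)\to v_j(x,t)$ a.e.\ as $t\to0^+$ (indeed $v^t_j=v_j$ pointwise for $t$ small); hence $v^t\to v$ in $\mathbb{L}^2(Q)$ by dominated convergence and $J'(\bar u)v^t=0$. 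Since $\|u^t-\bar u\|_{\mathbb{L}^2(Q)}=t\|v^t\|\to0$, local optimality yields $J(\bar u+tv^t)\ge J(\bar u)$ for small $t$, so the expansion $J(\bar u+tv^t)=J(\bar u)+tJ'(\bar u)v^t+\tfrac{t^2}{2}J''(\bar u)(v^t)^2+o(t^2)$ (the remainder is $o(t^2)$ because $\|v^t\|$ stays bounded) collapses to $\tfrac12 J''(\bar u)(v^t)^2+o(1)\ge0$. Letting $t\to0^+$, strong convergence $v^t\to v$ in $\mathbb{L}^2(Q)$ gives $z_{v^t}\to z_v$ in $W^{1,2}(0,T;V)$, hence in $\mathbb{L}^2(Q)$ and $C([0,T];\mathbb{L}^2(\Omega))$, so every term of $J''(\bar u)(v^t)^2$ converges to the corresponding term of $J''(\bar u)v^2$; therefore $J''(\bar u)v^2\ge0$.

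For the sufficient condition I argue by contradiction. If the claim fails, then taking $\varepsilon=\rho=1/k$ produces $u_k\in U_{\alpha,\beta}$ with $u_k\neq\bar u$, $u_k\to\bar u$ in $\mathbb{L}^2(Q)$, and $J(u_k)<J(\bar u)+\tfrac1k\|u_k-\bar u\|_{\mathbb{L}^2(Q)}^2$. Set $\rho_k:=\|u_k-\bar u\|_{\mathbb{L}^2(Q)}\to0$ and $v_k:=(u_k-\bar u)/\rho_k$, so $\|v_k\|_{\mathbb{L}^2(Q)}=1$, and pass to a subsequence with $v_k\rightharpoonup v$ in $\mathbb{L}^2(Q)$. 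First, $v\in\mathcal{C}_{\bar u}$: conditions \eqref{DGE8}--\eqref{DGE9} define weakly closed convex sets containing each $v_k$, hence $v$; and inserting $J(u_k)-J(\bar u)=\rho_kJ'(\bar u)v_k+\tfrac{\rho_k^2}{2}J''(\bar u)v_k^2+o(\rho_k^2)$ into $J(u_k)-J(\bar u)<\tfrac1k\rho_k^2$, dividing by $\rho_k^2$, and using $J'(\bar u)v_k\ge0$ (first-order condition) together with the boundedness of $J''(\bar u)v_k^2$ (from $\|z_{v_k}\|_{W^{1,2}(0,T;V)}\le C_M$ and $\|v_k\|_{\mathbb{L}^2(Q)}=1$) yields both that $\rho_k^{-1}J'(\bar u)v_k$ stays bounded, whence $J'(\bar u)v_k\to0$, and that $\limsup_kJ''(\bar u)v_k^2\le0$. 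From $J'(\bar u)v_k\to0$ and $v_k\rightharpoonup v$ we get $\int_0^T\!\!\int_\Omega(\bar\lambda+\gamma\bar u)\cdot v\,dx\,dt=0$; since this integrand is nonnegative a.e.\ (the sign conditions just proved for $v$ and the pointwise relations above), it vanishes a.e., which is \eqref{DGE10}. Thus $v\in\mathcal{C}_{\bar u}$.

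To conclude, $v_k\rightharpoonup v$ in $\mathbb{L}^2(Q)$ forces $z_{v_k}\rightharpoonup z_v$ in $W^{1,2}(0,T;V)$, hence $z_{v_k}\to z_v$ strongly in $\mathbb{L}^2(Q)$ and $C([0,T];\mathbb{L}^2(\Omega))$; consequently $\alpha_T|z_{v_k}(T)|^2$, $\alpha_Q\|z_{v_k}\|_{\mathbb{L}^2(Q)}^2$ and $\int_0^Tc(z_{v_k},z_{v_k},\bar\lambda)\,dt$ converge to the corresponding quantities for $z_v$, while $\gamma\|v_k\|_{\mathbb{L}^2(Q)}^2=\gamma\ge\gamma\|v\|_{\mathbb{L}^2(Q)}^2$ by weak lower semicontinuity of the norm, so $\liminf_kJ''(\bar u)v_k^2\ge J''(\bar u)v^2$. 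If $v\neq0$, \eqref{DGE12} gives $J''(\bar u)v^2>0$, contradicting $J''(\bar u)v^2\le\liminf_kJ''(\bar u)v_k^2\le\limsup_kJ''(\bar u)v_k^2\le0$; if $v=0$, then $z_{v_k}\to0$, so $J''(\bar u)v_k^2\to\gamma>0$, again contradicting $\limsup_kJ''(\bar u)v_k^2\le0$. Either way we reach a contradiction, which proves the asserted quadratic growth. The step I expect to be the main obstacle is passing to the limit in the trilinear terms $\int_0^Tc(z_{v^t},z_{v^t},\bar\lambda)\,dt$ and $\int_0^Tc(z_{v_k},z_{v_k},\bar\lambda)\,dt$ under merely weak convergence of the directions: this cannot be done term-by-term, and one must use the skew-symmetry $c(u,v,w)=-c(u,w,v)$ to move the spatial derivative onto the fixed adjoint state $\bar\lambda$ and then an interpolation inequality from Lemma~\ref{trilinear} to absorb the factor that converges only weakly, exploiting the strong convergence in $C([0,T];\mathbb{L}^2(\Omega))$ and the uniform bound in $C([0,T];V)$ coming from \eqref{DGE2.1}. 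A secondary, more bookkeeping-type point is the careful verification that $v^t$ and the weak limit $v$ actually belong to the critical cone $\mathcal{C}_{\bar u}$, which is exactly where the pointwise projection characterization of $\bar u$ is needed.
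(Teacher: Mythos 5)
The paper does not actually prove this theorem: it appears in the preliminaries under the heading ``we restate the results obtained in \cite{AN2016}'', specialized to the box constraints, so there is no in-paper argument to compare yours against. Your proof is the standard second-order analysis for control-constrained problems (the same scheme used in \cite{AN2016} and in \cite{Casas2012}) and, as far as I can check, it is correct. For the necessary part, the projection construction $u^t=\mathrm{Proj}_{U_{\alpha,\beta}}(\bar u+tv)$ does produce feasible points with $v^t=(u^t-\bar u)/t\in\mathcal{C}_{\bar u}$, $|v^t|\le|v|$ and $v^t\to v$ a.e., hence in $\mathbb{L}^2(Q)$, and your preliminary pointwise characterization of the variational inequality correctly yields $J'(\bar u)v^t=0$, so the Taylor expansion argument goes through. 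For the sufficient part, the normalized contradiction sequence, the identification of the weak limit as an element of $\mathcal{C}_{\bar u}$ via $J'(\bar u)v_k\to0$ and the sign structure of $\bar\lambda+\gamma\bar u$, and the dichotomy between $v\neq0$ (where \eqref{DGE12} applies after lower semicontinuity) and $v=0$ (where $\gamma\|v_k\|^2_{\mathbb{L}^2(Q)}=\gamma>0$ forces the contradiction) are all standard and correctly executed. The single step you leave as a sketch --- the limit of $\int_0^Tc(z_{v_k},z_{v_k},\bar\lambda)\,dt$ under weak convergence of the directions --- does close along the lines you indicate: $v\mapsto z_v$ is linear and bounded into $W^{1,2}(0,T;V)\hookrightarrow C([0,T];V)$, so $z_{v_k}\to z_v$ in $C([0,T];\mathbb{L}^2(\Omega))$ while $\|z_{v_k}\|_{C([0,T];V)}$ stays bounded; writing the difference of the trilinear terms as $c(z_{v_k}-z_v,z_{v_k},\bar\lambda)+c(z_v,z_{v_k}-z_v,\bar\lambda)$, using $c(u,v,w)=-c(u,w,v)$ on the second summand, and applying the interpolation bound $|b(u,v,w)|\le C|u|^{1/4}\|u\|^{3/4}\|v\|\,|w|^{1/4}\|w\|^{3/4}$ of Lemma~\ref{trilinear} places the vanishing factor $|z_{v_k}-z_v|^{1/4}$ in every term.
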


\section{Numerical approximation of the optimal control problem}
Let $\{\mathcal{T}_h\}_{h>0}$ be a family of triangulation of $\overline{\Omega}$, defined in the standard way. To each element $T\in\mathcal{T}_h$, we denote by $h_T$ and $\rho_T$ the diameter of the set $T$ and diameter of the largest ball contained in $T$. Define the size of the mesh by $h$, i.e. $h=\max_{T\in \mathcal{T}_h}h_T.$ We also assume that the following standard regularity assumptions on the triangulation hold:
\begin{itemize}
\item[(i)] There exist two positive constants $\rho_\mathcal{T},\delta_\mathcal{T}$ such that $\dfrac{h_T}{\rho_T}\le \rho_\mathcal{T}$ and $\dfrac{h}{h_T}\le \delta_\mathcal{T}$ for every $T\in\mathcal{T}_h$ and for every $h>0.$
\item[(ii)] Set $\overline{\Omega}_h=\cup_{T\in\mathcal{T}_h}T$ and denote by $\Omega_h$ and $\Gamma_h$ its interior and its boundary, respectively. We assume that the vertices of $\mathcal{T}_h$ placed on the boundary $\Gamma_h$ are points of $\Gamma$.
\end{itemize} 
Since $\Omega$ is convex, from the last assumption we have that $\Omega_h$ is also convex. Moreover, we assume that
\begin{equation}
\label{Ap1}
|\Omega\backslash \Omega_h|\le Ch^2.
\end{equation}
On the mesh $\mathcal{T}_h$ we consider two finite dimensional spaces $Z_h\subset \mathbb{H}^1_0(\Omega)$ and $Q_h\subset L^2_0(\Omega)$ formed by piecewise polynomials in $\Omega_h$ and vanishing in $\Omega\backslash \Omega_h.$ We make the following assumptions on these spaces:
\begin{itemize}
\item[(A1)] If $z\in \mathbb{H}^{1+l}(\Omega)\cap \mathbb{H}_0^1(\Omega)$ then 
\begin{equation*}
\inf_{z\in Z_h}\|z-z_h\|_{{\mathbb{H}}^s(\Omega)}\le Ch^{l+1-s}\|z\|_{\mathbb{H}^{1+l}(\Omega)} \text{ for } 0\le l\le 1 \text{ and } s=0,1.
\end{equation*}
\item[(A2)] If $q\in H^1(\Omega)\cap L^2_0(\Omega)$ then 
\begin{equation*}
\inf_{q_h\in Q_h}|q-q_h|\le Ch\|q\|_{H^1(\Omega)}.
\end{equation*}
\item[(A3)] The subspaces $Z_h$ and $Q_h$ satisfy the inf-sup condition: $\exists \beta>0$ such that 
\begin{equation*}
\inf_{q_h\in Q_h}\sup_{z_h\in Z_h}\dfrac{\mathfrak{b}(z_h,q_h)}{\|z_h\|_{{\mathbb{ H}}^1(\Omega)}\|q_h\|_{L^2(\Omega)}}\ge \beta,
\end{equation*}
where $\mathfrak{b}:\mathbb{H}^1(\Omega)\times L^2(\Omega)\to \mathbb{R}$ is defined by 
$$\mathfrak{b}(z,q)=\int_{\Omega} q(x)\text{div}z(x)dx.$$
\end{itemize}
These assumptions are satisfied by the usual finite elements considered in the discretization of Navier-Stokes equations, see \cite[Chapter 2]{Girault}.

We also consider a subspace $V_h$ of $Z_h$ defined by 
$$V_h=\{y_h\in Z_h:\mathfrak{b}(y_h,q_h)=0\;\;\forall q_h\in Q_h\},$$
and set
$$U_h=\{u_h\in \mathbb{L}^2(\Omega_h):u_h|_T\equiv u_T\in \mathbb{R}^3\;\;\forall T\in \mathcal{T}_h\}.$$
Now, we consider the discretization in time. Let $0=t_0<t_1<\cdots <t_{N_\tau}=T$ be a partition of interval $[0,T]$. We denote $\tau_n=t_n-t_{n-1}.$ We make the following assumption:
\begin{equation*}
\exists\, \rho_0>0 \text{ such that } \tau = \max_{1\le n\le N_\tau} \tau_n<\rho_0\tau_n\;\;\forall\, 1\le n\le N_\tau \text{ and } \forall \tau > 0.
\end{equation*}
Given a triangulation $\mathcal{T}_h$ of $\Omega$ and a grid of points $\{t_n\}_{n=1}^{N_\tau}$ of $[0,T]$, we set $\sigma=(\tau,h)$. We consider the subspace of functions that are piecewise constant in time 
$$U_\sigma=\{u_\sigma\in L^2(0,T;U_h): u_\sigma|_{(t_{n-1},t_n)}\in U_h\, \text{ for } 1\le n\le N_{\tau}\}.$$
We seek for the discrete controls in the space $U_\sigma$. An element of this space can be written in the form
\begin{equation*}
u_\sigma = \Sigma_{n=1}^{N_\tau}\Sigma_{T\in \mathcal{T}_h}u_{n,T}\chi_n\chi_T, \,\, \text{ with } u_{n,T}\in \mathbb{R}^3,
\end{equation*}
where $\chi_n$ and $\chi_T$ are the characteristic functions of $(t_{n-1},t_n)$ and $T$, respectively. Therefore, the dimension of $U_\sigma$ is $3N_\tau N_h$, where $N_h$ is the number of elements in $\mathcal{T}_h$. In $U_\sigma$ we consider the convex subset 
$$U_{\sigma,ad}=U_\sigma\cap U_{\alpha,\beta}=\{u_\sigma\in U_\sigma:u_{n,T}\in \Pi_{i=1}^3[\alpha_i,\beta_i]\}.$$
For each given sequence $(y_{0,h},y_{1,h},\ldots,y_{N_\tau,h})\in V_h^{N_\tau+1}$, we define a function $y_\sigma:[0,T]\to V_h$ by
\begin{equation*}
\begin{cases}
y_\sigma(t_n)=y_{n,h},\;n=0,1,\ldots,N_\tau,\\
y_\sigma(t)=y_{n,h}\;\forall t\in (t_{n-1},t_n).
\end{cases}
\end{equation*}
We denote by $V_\sigma$ the set of all functions that are defined by this way.

Now, we consider the numerical discretization of the state equations \eqref{ST1}. We will use a discontinuous time-stepping Galerkin method, with piecewise constants in time and conforming finite element spaces in space. For $u\in \mathbb{L}^2(Q)$, the discrete state equation is given by
\begin{equation}
\label{DSE1}
\begin{cases}
\text{ For } n=1,2,\ldots,N_\tau,\\
\left(\dfrac{y_{n,h}-y_{n-1,h}}{\tau_n},w_h\right)+\nu a(y_{n,h},w_h)+\alpha^2 a\left(\dfrac{y_{n,h}-y_{n-1,h}}{\tau_n},w_h\right)\\
\hspace{5cm}+c(y_{n,h},y_{n,h},w_h)=(u_n,w_h),\;\;\forall w_h\in V_h,\\
y_{0,h}=P_hy_0,
\end{cases}
\end{equation}
where $(u_n,w_h)=\dfrac{1}{\tau_n}\int\limits_{t_{n-1}}^{t_n}(u(t),w_h)dt$ and $P_hy_0$ is defined in Definition \ref{Def1}.

We will prove later that for every $u\in\mathbb{L}^2(Q)$, problem \eqref{DSE1} has a unique solution $y_\sigma(u)\in V_\sigma$. Now we can define the discrete control problem as follows
\begin{equation*}
(P_\sigma)\hspace{0.5cm}
\begin{cases}
\min J_\sigma (u_\sigma)\\
u_\sigma\in U_{\sigma,ad},
\end{cases}
\end{equation*}
with
\begin{multline*}
J_\sigma(u_\sigma)=\dfrac{\alpha_T}{2}\int_{\Omega_h}|y_\sigma(u_\sigma)(T)-y_T^h|^2dx+\dfrac{\alpha_Q}{2}\int_0^T\int_{\Omega_h}|y_\sigma(u_\sigma)-y_Q|^2dxdt\\
+\dfrac{\gamma}{2}\int_0^T\int_{\Omega_h}|u_\sigma|^2dxdt,
\end{multline*}
where $y_T^h\in V_h$ satisfies the following condition
\begin{equation*}
\exists C>0 \text{ such that } \|y_T-y_T^h\|_{\mathbb{L}^2(\Omega_h)}\le Ch \text{ and } \|y_T^h\|_{\mathbb{H}^1(\Omega_h)}\le C, \;\forall h>0.
\end{equation*}
The outline of this section is as follows. In Subsection 3.1, we analyze the discrete state equations \eqref{DSE1}. Then we study the discrete adjoint state equations in Subsection 3.2. Finally, we prove the convergence of solutions to problem $(P_\sigma)$ and derive the error estimates for the discretization in the last subsection.
\subsection{Analysis of the discrete state equations}
By a standard argument, using the identify $c(u,v,v)=0 \;\forall u,v\in \mathbb{H}^1_0(\Omega)$ and Brouwer's fixed-point theorem, one can easily prove that system \eqref{DSE1} has at least one solution. In this subsection, we will prove that the solution is unique. According to an abstract approximation result (see \cite{Girault}), for given $y\in V$, $p\in L^2_0(\Omega)$, the following problems have unique solutions.
\begin{align*}
(Pr_{1})&\quad\text{Find } y_h\in V_h \text{ satisfying: }
a_\alpha(y_h,v_h)=a_\alpha(y,v_h)\;\;\forall v_h\in V_h,\\
(Pr_{2})&\quad \text{Find a pair } (y_h,p_h)\in V_h\times Q_h \text{ satisfying:}\\
 &\quad a_\alpha(y_h,v_h)+\mathfrak{b}(v_h,p_h)=\mathfrak{b}(v_h,p)\;\; \forall v_h\in Z_h.
\end{align*}
Here, $a_\alpha(u,v)=(u,v)+\alpha^2(\nabla u,\nabla v).$ These results allow us to give the following definition. 
\begin{definition}
\label{Def1}
We define operators

\begin{tabular}{rcrl}
$P_h:$& $V$&$\to$ &$V_h$\\
&$y$&$\mapsto$& $y_h,\text{ which is the unique solution of the problem } (Pr_1),$
\end{tabular}

\begin{tabular}{rcrl}
$R_h:$&$L^2_0(\Omega)$&$\to$&$ Q_h$\\
&$p$&$\mapsto$&$ p_h, \text{ which is the second component of the solution of } (Pr_2)$.
\end{tabular}

We also define $P_\sigma:C([0,T];V)\to V_\sigma$ by $(P_\sigma y)_{n,h}=P_hy(t_n)$ for $0\le n\le N_\tau.$
\end{definition}
Obviously, there exists a constant $C$ depending only on $\alpha$ such that $\|P_hy\|_{\mathbb{H}^1(\Omega)}\le C\|y\|_{\mathbb{H}^1(\Omega)},\, \forall y\in V$. Using Theorem 1.1 in \cite[Chapter 2]{Girault}, we can easily get the following lemma from assumptions $(A1)-(A3)$.
\begin{lemma}
\label{LM1}
For every $u\in V\cap \mathbb{H}^2(\Omega)$, $p\in L^2_0(\Omega)\cap H^1(\Omega)$, we have
\begin{align*}
\|u-P_hu\|_{\mathbb{H}^1(\Omega)}&\le Ch\|u\|_{\mathbb{H}^2(\Omega)},\\
|p-R_hp|&\le Ch\|p\|_{H^1(\Omega)},
\end{align*}
where $C$ is a constant independent of $h$.
\end{lemma}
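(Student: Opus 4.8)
The plan is to deduce Lemma \ref{LM1} directly from the standard finite element approximation theory for the generalized Stokes problem, namely Theorem 1.1 in \cite[Chapter 2]{Girault}, applied to the bilinear form $a_\alpha(u,v)=(u,v)+\alpha^2(\nabla u,\nabla v)$ together with the form $\mathfrak{b}(\cdot,\cdot)$. The key observation is that $a_\alpha$ is symmetric, continuous on $\mathbb{H}^1(\Omega)\times\mathbb{H}^1(\Omega)$, and coercive on all of $\mathbb{H}^1_0(\Omega)$ (indeed $a_\alpha(v,v)=|v|^2+\alpha^2\|v\|^2\ge \alpha^2\|v\|^2$), hence in particular $V$-elliptic; and by assumption (A3) the pair $(Z_h,Q_h)$ satisfies the discrete inf-sup (LBB) condition with a constant $\beta$ independent of $h$. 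These are exactly the hypotheses under which the abstract theory yields quasi-optimal error estimates for the Galerkin projections $(Pr_1)$ and $(Pr_2)$.

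First I would treat $P_h$. For $u\in V\cap\mathbb{H}^2(\Omega)$, the element $P_hu\in V_h$ is the $a_\alpha$-orthogonal projection of $u$ onto $V_h$, so by Cea's lemma (the symmetric, coercive case of the abstract result) one gets
\begin{equation*}
\|u-P_hu\|_{\mathbb{H}^1(\Omega)}\le C\inf_{v_h\in V_h}\|u-v_h\|_{\mathbb{H}^1(\Omega)}.
\end{equation*}
The only subtlety is that the infimum is over the divergence-constrained space $V_h$ rather than over $Z_h$, so assumption (A1) alone does not immediately bound it. Here one invokes the inf-sup condition (A3), which (again via Theorem 1.1 of \cite[Chapter 2]{Girault}) guarantees that $V_h$ approximates divergence-free functions as well as $Z_h$ approximates general ones, up to a constant depending on $\beta$; combining this with (A1) for $l=1$, $s=1$ gives $\inf_{v_h\in V_h}\|u-v_h\|_{\mathbb{H}^1(\Omega)}\le Ch\|u\|_{\mathbb{H}^2(\Omega)}$, and the first estimate follows.

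Next I would treat $R_h$. For $p\in L^2_0(\Omega)\cap H^1(\Omega)$, the pair $(y_h,p_h)\in V_h\times Q_h$ solving $(Pr_2)$ is precisely the mixed finite element approximation of the generalized Stokes problem with data $(0,p)$ in the sense that its exact solution is $(0,p)$ (since $a_\alpha(0,v_h)+\mathfrak{b}(v_h,p)=\mathfrak{b}(v_h,p)$). The abstract mixed approximation theorem then yields
\begin{equation*}
\|y_h\|_{\mathbb{H}^1(\Omega)}+|p-p_h|\le C\Bigl(\inf_{v_h\in Z_h}\|0-v_h\|_{\mathbb{H}^1(\Omega)}+\inf_{q_h\in Q_h}|p-q_h|\Bigr)=C\inf_{q_h\in Q_h}|p-q_h|,
\end{equation*}
with $C$ depending on the continuity constants of $a_\alpha,\mathfrak{b}$ and on $\beta$ but not on $h$. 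Applying (A2) bounds the right-hand side by $Ch\|p\|_{H^1(\Omega)}$, which is the second estimate (and incidentally shows $\|y_h\|_{\mathbb{H}^1(\Omega)}\le Ch\|p\|_{H^1(\Omega)}$, a fact that may be useful later).

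The main obstacle, such as it is, is bookkeeping rather than conceptual: one must check carefully that the hypotheses of the cited abstract theorem are met with constants independent of $h$ — in particular the uniform LBB constant from (A3) and the fact that $a_\alpha$ is coercive on the whole space $\mathbb{H}^1_0(\Omega)$ (not merely on the kernel $V$), which makes the application of the theory completely standard — and that the domain discrepancy between $\Omega$ and $\Omega_h$ (functions in $Z_h,Q_h$ vanish on $\Omega\setminus\Omega_h$) does not interfere; this is harmless here because $\Omega$ is convex and \eqref{Ap1} controls $|\Omega\setminus\Omega_h|$, and because the estimates are stated in the $\mathbb{H}^1(\Omega)$ and $L^2(\Omega)$ norms with the finite element functions extended by zero. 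With these points verified, the lemma is immediate, so I would keep the proof to a couple of lines citing \cite[Chapter 2, Theorem 1.1]{Girault} and assumptions (A1)–(A3).
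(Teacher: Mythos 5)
Your proposal is correct and follows essentially the same route as the paper, which itself disposes of this lemma in one line by citing Theorem 1.1 of \cite[Chapter 2]{Girault} together with assumptions (A1)--(A3); you have simply unpacked that citation into the coercivity of $a_\alpha$, C\'ea's lemma plus the inf-sup-based approximation of $V$ by $V_h$ for the first estimate, and the abstract mixed error bound with exact solution $(0,p)$ for the second. No discrepancy with the paper's argument.
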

\begin{lemma}
\label{LM2}
There exists a constant $C>0$ independent of $\sigma$ such that for every $y\in W^{1,2}(0,T;D(A))$ we have
$$\|y-P_\sigma y\|_{L^2(0,T;\mathbb{H}^1(\Omega))}\le C\left(h\|y\|_{L^2(0,T;\mathbb{H}^2(\Omega))}+\tau\|y'\|_{L^2(0,T;\mathbb{H}^1(\Omega))}\right).$$
\end{lemma}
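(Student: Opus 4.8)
The plan is to bound $\|y - P_\sigma y\|_{L^2(0,T;\mathbb{H}^1(\Omega))}$ by inserting the time-interpolant and splitting via the triangle inequality into a purely spatial error and a purely temporal one. Since $P_\sigma y$ is the function that equals $P_h y(t_n)$ on the whole subinterval $(t_{n-1},t_n)$, I would first write, on each $(t_{n-1},t_n)$,
\[
y - P_\sigma y = \bigl(y - P_h y\bigr) + \bigl(P_h y - P_h y(t_n)\bigr),
\]
so that $\|y-P_\sigma y\|_{L^2(0,T;\mathbb{H}^1)} \le \|y - P_h y\|_{L^2(0,T;\mathbb{H}^1)} + \|P_h y - P_h y(t_n)\|_{L^2(0,T;\mathbb{H}^1)}$, where in the second term $P_h y(t_n)$ is understood as the piecewise-constant-in-time function taking that value on $(t_{n-1},t_n)$.

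For the first term I would use the pointwise-in-time spatial estimate from Lemma \ref{LM1}: since $y(t) \in D(A) = \mathbb{H}^2(\Omega)\cap V$ for a.e. $t$, we have $\|y(t) - P_h y(t)\|_{\mathbb{H}^1(\Omega)} \le Ch\|y(t)\|_{\mathbb{H}^2(\Omega)}$, and squaring and integrating in $t$ gives $\|y - P_h y\|_{L^2(0,T;\mathbb{H}^1)} \le Ch\|y\|_{L^2(0,T;\mathbb{H}^2)}$. For the second term, I would use that $P_h$ is a bounded linear operator on $V$ (the constant depending only on $\alpha$, as noted right after Definition \ref{Def1}), so $\|P_h y(t) - P_h y(t_n)\|_{\mathbb{H}^1} \le C\|y(t) - y(t_n)\|_{\mathbb{H}^1}$ for $t \in (t_{n-1},t_n)$; then I would estimate $y(t) - y(t_n) = -\int_t^{t_n} y'(s)\,ds$ in $\mathbb{H}^1(\Omega)$, which makes sense because $y \in W^{1,2}(0,T;D(A)) \hookrightarrow W^{1,2}(0,T;\mathbb{H}^1)$ and hence $y \in C([0,T];\mathbb{H}^1)$ with an absolutely continuous $\mathbb{H}^1$-valued representative. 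By Cauchy–Schwarz, $\|y(t)-y(t_n)\|_{\mathbb{H}^1}^2 \le \tau_n \int_{t_{n-1}}^{t_n}\|y'(s)\|_{\mathbb{H}^1}^2\,ds$, and integrating over $t \in (t_{n-1},t_n)$ yields $\int_{t_{n-1}}^{t_n}\|y(t)-y(t_n)\|_{\mathbb{H}^1}^2\,dt \le \tau_n^2 \int_{t_{n-1}}^{t_n}\|y'(s)\|_{\mathbb{H}^1}^2\,ds \le \tau^2 \int_{t_{n-1}}^{t_n}\|y'(s)\|_{\mathbb{H}^1}^2\,ds$. Summing over $n$ and taking square roots gives $\|P_h y(\cdot) - P_h y(t_n)\|_{L^2(0,T;\mathbb{H}^1)} \le C\tau\|y'\|_{L^2(0,T;\mathbb{H}^1)}$, and combining the two bounds finishes the proof.

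The only genuinely delicate point is the justification that $t \mapsto y(t)$ admits an $\mathbb{H}^1$-valued absolutely continuous representative with derivative $y'$ a.e., so that the fundamental-theorem-of-calculus step $y(t) - y(t_n) = -\int_t^{t_n} y'(s)\,ds$ in $\mathbb{H}^1(\Omega)$ is legitimate; this is exactly where the hypothesis $y \in W^{1,2}(0,T;D(A))$ (rather than merely $y \in L^2(0,T;D(A))$) and the continuous embedding $W^{1,2}(0,T;X)\hookrightarrow C([0,T];X)$ recalled in Section 2 are used, with $X = \mathbb{H}^1(\Omega) \supset D(A)$. Everything else is a routine Cauchy–Schwarz-and-sum computation, so I do not anticipate any real obstacle beyond bookkeeping the constants (which depend only on $\alpha$ and the shape-regularity constants, not on $\sigma$).
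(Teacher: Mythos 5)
Your proposal is correct and follows essentially the same route as the paper: the same splitting $y - P_\sigma y = (y - P_h y) + (P_h y - P_h y(t_n))$ on each subinterval, with Lemma \ref{LM1} handling the spatial part and the fundamental theorem of calculus plus Cauchy--Schwarz handling the temporal part. Your extra remark justifying the absolutely continuous $\mathbb{H}^1$-valued representative is a welcome (if implicit in the paper) precision, but otherwise the two arguments coincide.
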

\begin{proof} We have
\begin{align*}
&\|y-P_\sigma y\|_{L^2(0,T;\mathbb{H}^1(\Omega))}
=\left\{\sum_{n=1}^{N_\tau}\int_{t_{n-1}}^{t_n}\|y(t)-P_h y(t_n)\|_{\mathbb{H}^1(\Omega)}^2dt\right\}^{1/2}\\
&\le \left\{\sum_{n=1}^{N_\tau}\int_{t_{n-1}}^{t_n}\|y(t)-P_h y(t)\|_{\mathbb{H}^1(\Omega)}^2dt\right\}^{1/2}\\
&\hspace{5cm} + \left\{\sum_{n=1}^{N_\tau}\int_{t_{n-1}}^{t_n}\|P_hy(t)-P_h y(t_n)\|_{\mathbb{H}^1(\Omega)}^2dt\right\}^{1/2}\\
&\le Ch\left\{\sum_{n=1}^{N_\tau}\int_{t_{n-1}}^{t_n}\|y(t)\|_{\mathbb{H}^2(\Omega)}^2dt\right\}^{1/2}+C\left\{\sum_{n=1}^{N_\tau}\int_{t_{n-1}}^{t_n}\|y(t)-y(t_n)\|_{\mathbb{H}^1(\Omega)}^2dt\right\}^{1/2}\\
&= Ch\left\{\sum_{n=1}^{N_\tau}\int_{t_{n-1}}^{t_n}\|y(t)\|_{\mathbb{H}^2(\Omega)}^2dt\right\}^{1/2}+C\left\{\sum_{n=1}^{N_\tau}\int_{t_{n-1}}^{t_n}\|\int_{t}^{t_n}y'(s)ds\|_{\mathbb{H}^1(\Omega)}^2dt\right\}^{1/2}\\
&\le Ch\left\{\sum_{n=1}^{N_\tau}\int_{t_{n-1}}^{t_n}\|y(t)\|_{\mathbb{H}^2(\Omega)}^2dt\right\}^{1/2}\\
&\hspace{4cm}+C\left\{\sum_{n=1}^{N_\tau}\int_{t_{n-1}}^{t_n}(t_n-t)\int_{t_{n-1}}^{t_n}\|y'(s)\|^2_{\mathbb{H}^1(\Omega)}dsdt\right\}^{1/2}\\
&\le C(h\|y\|_{L^2(0,T;\mathbb{H}^2(\Omega))}+\tau \|y'\|_{L^2(0,T;\mathbb{H}^1(\Omega))}).
\end{align*}
\end{proof}
\begin{lemma}
\label{LM53}
Let $y\in W^{1,2}(0,T;D(A))$ be the unique solution of \eqref{ST2}. We consider the following system
\begin{equation}
\label{DS2}
\begin{cases}
\text{For } n=1,\ldots,N_\tau,\\
 \left (\dfrac{\hat{y}_{n,h}-\hat{y}_{n-1,h}}{\tau_n},w_h\right )+\nu a(\hat{y}_{n,h},w_h)+\alpha^2a(\dfrac{\hat{y}_{n,h}-\hat{y}_{n-1,h}}{\tau_n},w_h)\\
 \hspace{8.5cm}=(f_n,w_h)\;\;\forall\, w_h\in V_h,\\
 \hat{y}_{0,h}=y_{0h},  
\end{cases}
\end{equation}
where 
$$(f_n,w_h)=\dfrac{1}{\tau_n}\int_{t_{n-1}}^{t_n}\{\nu a(y(t),w_h)+\alpha^2a(y'(t),w_h)+(y'(t),w_h)\}dt.$$
This system has a unique solution $\hat{y}_\sigma\in V_\sigma$. Moreover, we have the following properties:
\begin{enumerate}
\item $\{\hat{y}_\sigma\}_\sigma$ is bounded in $L^\infty(0,T;\mathbb{H}^1(\Omega)).$
\item There exists a constant $C>0$ independent of $\sigma$ such that 
\begin{multline}
\label{DS3}
\max_{1\le n\le N_\tau}\|y(t_n)-\hat{y}_\sigma(t_n)\|_{\mathbb{H}^1(\Omega)} + 
\|y-\hat{y}_\sigma\|_{L^2(0,T;\mathbb{H}^1(\Omega))}\\
\le C(h\|y\|_{C([0,T];\mathbb{H}^2(\Omega))}+\tau\|y'\|_{L^2(0,T;\mathbb{H}^1(\Omega))}),
\end{multline}
\begin{multline}
\label{DS3.1}
\|y-\hat{y}_\sigma\|_{L^\infty(0,T;\mathbb{H}^1(\Omega))}
\le C\{(\tau+\sqrt{\tau})\|y'\|_{L^2(0,T;\mathbb{H}^1(\Omega))}+h\|y\|_{C([0,T];\mathbb{H}^2(\Omega))}\}.
\end{multline}
\end{enumerate}
\end{lemma}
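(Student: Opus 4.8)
The plan is to treat \eqref{DS2} as a linear implicit Euler-type scheme applied to the Galerkin projection of \eqref{ST2}, so that existence and uniqueness follow easily while the error estimates require comparing $\hat y_\sigma$ both with $y$ and with the interpolant $P_\sigma y$. Concretely, for fixed $n$ the equation for $\hat y_{n,h}$ reads $\bigl((1+\alpha^2/\tau_n\cdot\text{(something)})\cdot\hat y_{n,h},w_h\bigr)+\nu a(\hat y_{n,h},w_h)=\text{known data}$; after rearranging it is $\tfrac{1}{\tau_n}a_\alpha(\hat y_{n,h},w_h)+\nu a(\hat y_{n,h},w_h)=\text{rhs}$, a coercive symmetric bilinear form on $V_h$, so Lax--Milgram gives a unique $\hat y_{n,h}\in V_h$; induction on $n$ then yields the unique $\hat y_\sigma\in V_\sigma$.

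For the a priori bound (item (1)), I would test \eqref{DS2} with $w_h=\hat y_{n,h}$, use the elementary identity $(a-b,a)=\tfrac12(|a|^2-|b|^2+|a-b|^2)$ for both the $(\cdot,\cdot)$ and the $a(\cdot,\cdot)$ terms, multiply by $\tau_n$ and sum over $n$; the telescoping produces $|\hat y_{N,h}|^2+\alpha^2\|\hat y_{N,h}\|^2+\nu\sum\tau_n\|\hat y_{n,h}\|^2$ on the left and, on the right, $|\hat y_{0,h}|^2+\alpha^2\|\hat y_{0,h}\|^2$ plus terms $\sum\int_{t_{n-1}}^{t_n}\{\nu a(y,\hat y_{n,h})+\alpha^2 a(y',\hat y_{n,h})+(y',\hat y_{n,h})\}$, which are controlled by Cauchy--Schwarz and Young's inequality using the boundedness of $y$ in $W^{1,2}(0,T;D(A))$ from Theorem~\ref{THR2.2}. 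The term $\|\hat y_{0,h}\|=\|P_hy_0\|\le C\|y_0\|$ is bounded. This gives both the $L^\infty(0,T;\mathbb{H}^1(\Omega))$ bound and the discrete $L^2(0,T;\mathbb{H}^1(\Omega))$ bound on $\hat y_\sigma$.

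For item (2), I would split the error via the projection: write $y(t_n)-\hat y_{n,h}=\bigl(y(t_n)-P_hy(t_n)\bigr)+\bigl(P_hy(t_n)-\hat y_{n,h}\bigr)=:\eta_n+\theta_n$. The $\eta_n$-part is handled purely by approximation theory — Lemma~\ref{LM1}, Lemma~\ref{LM2} and the regularity $y\in C([0,T];\mathbb{H}^2(\Omega))$ (which holds because $W^{1,2}(0,T;D(A))\hookrightarrow C([0,T];D(A))$). For $\theta_n\in V_h$ one derives an error equation: subtract the defining relation of $P_h$ (which makes $a_\alpha(\eta_n,w_h)$ vanish, so only $\nu a(\eta_n,w_h)$ survives from the elliptic part) and the definition of $(f_n,w_h)$ from \eqref{DS2}, obtaining
\begin{equation*}
\Bigl(\tfrac{\theta_n-\theta_{n-1}}{\tau_n},w_h\Bigr)+\nu a(\theta_n,w_h)+\alpha^2 a\Bigl(\tfrac{\theta_n-\theta_{n-1}}{\tau_n},w_h\Bigr)=\nu a(\rho_n,w_h)+\alpha^2 a\bigl(\tfrac{\rho_n-\rho_{n-1}}{\tau_n},w_h\bigr)+\Bigl(\tfrac{\rho_n-\rho_{n-1}}{\tau_n},w_h\Bigr),
\end{equation*}
where $\rho_n:=P_hy(t_n)-\tfrac{1}{\tau_n}\int_{t_{n-1}}^{t_n}P_hy(t)\,dt$ (or an analogous consistency remainder measuring the difference between the pointwise value $y(t_n)$ used implicitly and the time-average in $f_n$). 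Testing with $w_h=\theta_n$, telescoping as before, and estimating the right-hand side consistency terms by $\tau\|y'\|_{L^2(0,T;\mathbb{H}^1(\Omega))}$ (since $\rho_n-\rho_{n-1}$ and $\rho_n$ are $O(\tau)$ differences of a $W^{1,2}$-in-time function) yields the discrete $L^2$ estimate and the nodal maximum in \eqref{DS3}. Finally \eqref{DS3.1} follows from \eqref{DS3} together with the observation that on each subinterval $\hat y_\sigma$ is constant equal to $\hat y_{n,h}$, so $\|y-\hat y_\sigma\|_{L^\infty(t_{n-1},t_n;\mathbb{H}^1)}\le\|y(t_n)-\hat y_{n,h}\|_{\mathbb{H}^1}+\sup_{t\in(t_{n-1},t_n)}\|y(t)-y(t_n)\|_{\mathbb{H}^1}$, and the last term is bounded by $\bigl(\int_{t_{n-1}}^{t_n}\|y'\|_{\mathbb{H}^1}^2\bigr)^{1/2}\tau^{1/2}\le\sqrt\tau\|y'\|_{L^2(0,T;\mathbb{H}^1(\Omega))}$, which is exactly where the $\sqrt\tau$ loss appears.

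The main obstacle I anticipate is the bookkeeping in the error equation for $\theta_n$: one must be careful that the test space is $V_h$ (so discrete-divergence-free), that the projection $P_h$ is the $a_\alpha$-projection rather than the $L^2$- or $A$-projection — hence $a_\alpha(\eta_n,w_h)=0$ but $\nu a(\eta_n,w_h)=\nu a(\eta_n,w_h)\ne 0$ in general, and this leftover $\nu a(\eta_n,w_h)$ term must be absorbed using $\|\eta_n\|\le Ch\|y(t_n)\|_{\mathbb{H}^2}$ — and that the time-averaging in the definition of $f_n$ is reconciled correctly with the pointwise nodal values, which is the source of the consistency error. Once the error equation is set up correctly, the energy argument is routine and mirrors the a priori estimate of item (1); no nonlinear terms appear here since \eqref{DS2} is the linearized/auxiliary scheme.
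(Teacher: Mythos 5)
Your proposal follows essentially the same route as the paper's proof: Lax--Milgram for existence and uniqueness, testing with $\hat y_{n,h}$ and telescoping for the uniform bound, splitting the error as $(y-P_\sigma y)+(P_\sigma y-\hat y_\sigma)$ so that the $a_\alpha$-orthogonality of $P_h$ annihilates the time-difference terms and leaves only the $\nu a(\cdot,\cdot)$ contribution as a source (which the paper bounds via Lemma \ref{LM2}), and finally the $\sqrt\tau$ loss from $\|y(t)-y(t_n)\|_{\mathbb{H}^1}\le\sqrt\tau\|y'\|_{L^2(0,T;\mathbb{H}^1(\Omega))}$. The only point to tighten is your tentative error equation: the terms $\bigl(\tfrac{\rho_n-\rho_{n-1}}{\tau_n},w_h\bigr)$ and $\alpha^2 a\bigl(\tfrac{\rho_n-\rho_{n-1}}{\tau_n},w_h\bigr)$ do not appear, because the time-derivative contributions in $f_n$ integrate exactly to nodal differences and the $a_\alpha$-projection kills the corresponding $\eta$-differences, so the sole consistency source is $\tfrac{\nu}{\tau_n}\int_{t_{n-1}}^{t_n}a\bigl(y(t)-P_hy(t_n),w_h\bigr)\,dt$ (had those extra terms genuinely been present, the naive $O(\tau)$ bound on $\rho_n-\rho_{n-1}$ would not suffice after dividing by $\tau_n$ and summing).
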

\begin{proof}
The existence and uniqueness of the solution $\hat{y}_\sigma$ is easily proved by using the Lax-Milgram theorem. We are going to prove the boundedness of $\hat{y}_\sigma$. Taking $w_h=\hat{y}_{n,h}$ in \eqref{DS2} we have
\begin{multline}
\label{DS4}
|\hat{y}_{n,h}|^2+\nu\tau_n|\nabla\hat{y}_{n,h}|^2+\alpha^2|\nabla\hat{y}_{n,h}|^2\\
=(\hat{y}_{n-1,h},\hat{y}_{n,h})+\alpha^2(\nabla\hat{y}_{n-1,h},\nabla \hat{y}_{n,h})+\tau_n(f_n,\hat{y}_{n,h}).
\end{multline}
It follows from H\"older's inequality that
\begin{equation}
\label{DS5}
(\hat{y}_{n-1,h},\hat{y}_{n,h})\le \dfrac{1}{2} |\hat{y}_{n-1,h} |^2 +\dfrac{1}{2} |\hat{y}_{n,h} |^2 ,
\end{equation}
\begin{equation}
\label{DS6}
\alpha^2(\nabla\hat{y}_{n-1,h},\nabla\hat{y}_{n,h})\le \dfrac{\alpha^2}{2} |\nabla\hat{y}_{n-1,h} |^2 +\dfrac{\alpha^2}{2} |\nabla\hat{y}_{n,h} |^2,
\end{equation}
$$
\int_{t_{n-1}}^{t_n}\nu a(y(t),\hat{y}_{n,h})dt\le \dfrac{\nu}{4}\tau_n  |\nabla\hat{y}_{n,h} |^2 +C_{\nu} \int_{t_{n-1}}^{t_n} |\nabla y(t) |^2dt,
$$
$$
\int_{t_{n-1}}^{t_n}\alpha^2a(y'(t),\hat{y}_{n,h})dt\le
 \dfrac{\nu}{4}\tau_n  |\nabla\hat{y}_{n,h} |^2 +C_{\nu,\alpha} \int_{t_{n-1}}^{t_n} |\nabla y'(t) |^2 dt,
 $$
 $$
 \int_{t_{n-1}}^{t_n}(y'(t),\hat{y}_{n,h})dt\le\dfrac{\nu}{4}\tau_n  |\nabla\hat{y}_{n,h} |^2 +C_{\nu,\Omega} \int_{t_{n-1}}^{t_n} |y'(t)|^2dt.
$$
In the last estimate, we have used the fact that 
$|\hat{y}_{n,h}| \le C_\Omega |\nabla\hat{y}_{n-1,h}|,$ 
by Poincar\'e's inequality, since $\hat{y}_{n,h}\in \mathbb{H}^1_0(\Omega)$. Summarizing the last three estimates leads to
\begin{equation}
\label{DS7}
\tau_n(f_n,\hat{y}_{n,h})\le \dfrac{3\nu}{4}\tau_n|\nabla\hat{y}_{n,h}|^2 +C_{\nu,\alpha,\Omega}\|y\|^2_{W^{1,2}(t_{n-1},t_n;\mathbb{H}^1(\Omega))}.
\end{equation}
Combining \eqref{DS4} with \eqref{DS5}, \eqref{DS6}, \eqref{DS7} gives
\begin{align*}
&\dfrac{1}{2} |\hat{y}_{n,h}|^2  +\dfrac{\nu}{4}\tau_n |\nabla\hat{y}_{n,h} |^2 +\dfrac{\alpha^2}{2} |\nabla\hat{y}_{n,h} |^2 \\
\le & \dfrac{1}{2} |\hat{y}_{n-1,h} |^2+\dfrac{\alpha^2}{2} |\nabla\hat{y}_{n-1,h}|^2 +C_{\nu,\alpha,\Omega}\|y\|^2_{W^{1,2}(t_{n-1},t_n;\mathbb{H}^1(\Omega))}.
\end{align*}
Summarizing these estimates from $n=1$ to $n=k$ ($k$ is an arbitrary integer in the set $\{1,2,\ldots,N_\tau\}$) we obtain
\begin{equation}
\label{DS7.1}
\|\hat{y}_\sigma\|_{L^\infty(0,T;\mathbb{H}^1(\Omega))}\le C(\|y_{0h}\|_{\mathbb{H}^1(\Omega)}+\|y\|_{W^{1,2}(0,T;\mathbb{H}^1(\Omega))}),
\end{equation}
which gives the first statement. To prove the second statement, we set
$$e=y-\hat{y}_\sigma,\quad e_h=P_\sigma y -\hat{y}_\sigma, \quad e_p= y-P_\sigma y.$$
Since $\hat{y}_{n,h}=y(t_n)-e(t_n)$, \eqref{DS2} gives
\begin{multline*}
\left(\dfrac{e(t_n)-e(t_{n-1})}{\tau_n},w_h\right)+\dfrac{1}{\tau_n}\int_{t_{n-1}}^{t_n}\nu a(e(t),w_h) + \alpha^2a\left(\dfrac{e(t_n)-e(t_{n-1})}{\tau_n},w_h\right)=0.
\end{multline*} 
Replacing $e$ by $e_p+e_h$ and using the definition of $P_h$ yield
\begin{multline*}
(e_h(t_n)-e_h(t_{n-1}),w_h)+\alpha^2a(e_h(t_n)-e_h(t_{n-1}),w_h)+\int_{t_{n-1}}^{t_n}\nu a(e_h(t),w_h)dt\\
+\int_{t_{n-1}}^{t_n}\nu a(e_p(t),w_h)dt=0.
\end{multline*}
Taking $w_h=e_h(t_n)$ we have
\begin{multline*}
\dfrac{1}{2}|e_h(t_n)|^2-\dfrac{1}{2}|e_h(t_{n-1})|^2+\dfrac{1}{2}|e_h(t_n)-e_h(t_{n-1})|^2
 + \dfrac{\alpha^2}{2}|\nabla e_h(t_n)|^2\\
 -\dfrac{\alpha^2}{2}|\nabla e_h(t_{n-1})|^2+\dfrac{\alpha^2}{2}|\nabla e_h(t_n)-\nabla e_h(t_{n-1})|^2
 +\nu \int_{t_{n-1}}^{t_n}|\nabla e_h|^2dt\\
 \le \dfrac{\nu}{2}\|y-P_\sigma y\|^2_{L^2(t_{n-1},t_n;\mathbb{H}^1(\Omega))}
 +\dfrac{\nu}{2} \int_{t_{n-1}}^{t_n}|\nabla e_h|^2dt.
\end{multline*}
Therefore,
\begin{multline*}
\dfrac{1}{2}|e_h(t_n)|^2+\dfrac{1}{2}|e_h(t_n)-e_h(t_{n-1})|^2
 + \dfrac{\alpha^2}{2}|\nabla e_h(t_n)|^2
 +\dfrac{\alpha^2}{2}|\nabla e_h(t_n)-\nabla e_h(t_{n-1})|^2\\
 +\dfrac{\nu}{2} \int_{t_{n-1}}^{t_n}|\nabla e_h|^2dt
 \le\dfrac{1}{2}|e_h(t_{n-1})|^2+\dfrac{\alpha^2}{2}|\nabla e_h(t_{n-1})|^2
 + \dfrac{\nu}{2}\|y-P_\sigma y\|^2_{L^2(t_{n-1},t_n;\mathbb{H}^1(\Omega))}.
\end{multline*}
Adding these inequalities for $n=1,\ldots,k$, and noticing that $e_h(0)=0$, we have
\begin{multline*}
\dfrac{1}{2}|e_h(t_k)|^2+\dfrac{\alpha^2}{2}|\nabla e_h(t_k)|^2+\dfrac{\nu}{2}\int_{0}^{t_k}|\nabla e_h|^2dt
\le \dfrac{\nu}{2}\|y-P_\sigma y\|^2_{L^2(0,T;\mathbb{H}^1(\Omega))}.
\end{multline*}
This implies that, for every $k\in\{1,2, \ldots,N_\tau\}$
\begin{align}
\|e_h(t_k)\|_{\mathbb{H}^1(\Omega)}&\le C\{h\|y\|_{L^2(0,T;\mathbb{H}^2(\Omega))}+\tau\|y'\|_{L^2(0,T;\mathbb{H}^1(\Omega))}\},\label{DS8}\\
\|e_h\|_{L^2(0,T;\mathbb{H}^1(\Omega))}&\le C\{h\|y\|_{L^2(0,T;\mathbb{H}^2(\Omega))}+\tau\|y'\|_{L^2(0,T;\mathbb{H}^1(\Omega))}\},\label{DS9}
\end{align}
by Lemma \ref{LM2}. Now, we are going to estimate $\|e_p(t_k)\|_{\mathbb{H}^1(\Omega)}$. We have
$e_p(t_k)=y(t_k)-P_hy(t_k)$. Then,
\begin{equation}
\label{DS10}
\|e_p(t_k)\|_{\mathbb{H}^1(\Omega)}\le Ch\|y(t_k)\|_{\mathbb{H}^2(\Omega)}\le Ch\|y\|_{C([0,T];\mathbb{H}^2(\Omega))}.
\end{equation} 
From \eqref{DS8}, \eqref{DS10} we have
\begin{align*}
\|e(t_k)\|_{\mathbb{H}^1(\Omega)}&\le \|e_h(t_k)\|_{\mathbb{H}^1(\Omega)}+\|e_p(t_k)\|_{\mathbb{H}^1(\Omega)}\\
&\le C(h\|y\|_{C([0,T];\mathbb{H}^2(\Omega))}+\tau\|y'\|_{L^2(0,T;\mathbb{H}^1(\Omega))}),
\end{align*}
for every $k=1,\ldots,N_\tau$. This estimate combining with Lemma \ref{LM2}, \eqref{DS9} imply \eqref{DS3}. Finally, we prove \eqref{DS3.1}. Assume that $t\in (t_{n-1},t_n)$ for some $n\in \{1,\ldots,N_\tau\}.$ Then 
$$\|y(t)-\hat{y}_\sigma(t)\|_{\mathbb{H}^1(\Omega)}\le \|y(t)-y(t_n)\|_{\mathbb{H}^1(\Omega)}+\|y(t_n)-\hat{y}_\sigma(t_n)\|_{\mathbb{H}^1(\Omega)}.$$
The first term can be estimated as follows
\begin{align*}
\|y(t)-y(t_n)\|_{\mathbb{H}^1(\Omega)}&=\|\int_t^{t_n}y'(s)ds\|_{\mathbb{H}^1(\Omega)}\\
&\le \int_t^{t_n}\|y'(s)\|_{\mathbb{H}^1(\Omega)}ds\le \sqrt{\tau}\|y'\|_{L^2(0,T;\mathbb{H}^1(\Omega))}.
\end{align*}
This combining with \eqref{DS3} implies \eqref{DS3.1}.

\end{proof}
\begin{theorem}
For every given $u\in \mathbb{L}^2(Q)$, \eqref{DSE1} has a unique solution $y_\sigma\in V_\sigma$. Moreover, there exists a constant $C>0$ independent of $\sigma$ such that 
\begin{multline}
\label{DS11}
\max_{1\le n\le N_\tau}\|y(t_n)-y_\sigma(t_n)\|_{\mathbb{H}^1(\Omega)} + 
\|y-y_\sigma\|_{L^2(0,T;\mathbb{H}^1(\Omega))}\\
\le C(\tau\|y'\|_{L^2(0,T;\mathbb{H}^1(\Omega))}+h\|y\|_{C([0,T];\mathbb{H}^2(\Omega))}+h\|p\|_{L^2(0,T;H^1(\Omega))}),
\end{multline}
\begin{multline}
\label{DS12}
|y-y_\sigma\|_{L^\infty(0,T;\mathbb{H}^1(\Omega))}\le
 C\{(\tau+\sqrt{\tau})\|y'\|_{L^2(0,T;\mathbb{H}^1(\Omega))}\\
 +h\|y\|_{C([0,T];\mathbb{H}^2(\Omega))}+h\|p\|_{L^2(0,T;H^1(\Omega))}\},
\end{multline}
where $y\in W^{1,2}(0,T;D(A))$ is the unique solution of \eqref{ST2}. 
\end{theorem}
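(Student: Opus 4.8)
The plan is in two stages: first the well-posedness of \eqref{DSE1}, then the two error bounds, obtained by inserting the auxiliary linear discretization $\hat y_\sigma$ of Lemma \ref{LM53} as an intermediate state and invoking \eqref{DS3}, \eqref{DS3.1}.

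\textbf{Well-posedness.} Existence of some $y_\sigma\in V_\sigma$ solving \eqref{DSE1} is the Brouwer fixed-point argument already recalled, based on $c(y_{n,h},y_{n,h},y_{n,h})=0$. Testing \eqref{DSE1} with $w_h=y_{n,h}$, multiplying by $\tau_n$ and summing over $n$ gives, exactly as in the derivation of \eqref{DS7.1} (the convective term drops out), a bound
\[
\|y_\sigma\|_{L^\infty(0,T;\mathbb{H}^1(\Omega))}\le M_1:=C\big(\|y_0\|_{\mathbb{H}^1(\Omega)}+\|u\|_{\mathbb{L}^2(Q)}\big),
\]
with $M_1$ \emph{independent of $\sigma$}; this uniform control of the full $\mathbb{H}^1$-norm, produced by the Voigt term $\alpha^2a(\cdot,\cdot)$, is exactly what makes an inverse inequality unnecessary here. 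For uniqueness I would let $y_\sigma^1,y_\sigma^2$ both solve \eqref{DSE1}, set $\delta_n=y^1_{n,h}-y^2_{n,h}$, subtract the two equations, test with $\delta_n$ and use $c(y^2_{n,h},\delta_n,\delta_n)=0$, $c(\delta_n,y^1_{n,h},\delta_n)=-c(\delta_n,\delta_n,y^1_{n,h})$ and $|c(\delta_n,\delta_n,y^1_{n,h})|\le C\|y^1_{n,h}\|\,\|\delta_n\|^2\le CM_1\|\delta_n\|^2$. With $a_n=\tfrac12|\delta_n|^2+\tfrac{\alpha^2}{2}\|\delta_n\|^2$ this gives
\[
a_n-a_{n-1}+\nu\tau_n\|\delta_n\|^2\le CM_1\tau_n\|\delta_n\|^2\le \tfrac{2CM_1}{\alpha^2}\tau_n\,a_n .
\]
Since $\delta_0=0$, a discrete Gronwall argument forces $\delta_n\equiv 0$; it is valid as soon as $\tau$ is below a threshold depending on $M_1$ but \emph{not} on $h$, which is precisely where \cite{Casas2012} had to impose $\tau\le Ch^2$.

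\textbf{The error equation.} Let $\hat y_\sigma\in V_\sigma$ solve \eqref{DS2}; since both \eqref{DSE1} and \eqref{DS2} start from $P_hy_0$ we have $\eta_0:=y_{0,h}-\hat y_{0,h}=0$. Put $e=y-\hat y_\sigma$ and $\eta_n=y_{n,h}-\hat y_{n,h}$. Because $y_0\in D(A)$, Theorem \ref{THR2.2} furnishes a pressure $p\in L^2(0,T;H^1(\Omega)\cap L^2_0(\Omega))$; restricting the weak formulation \eqref{ST1} to $w_h\in V_h\subset\mathbb{H}^1_0(\Omega)$, integrating over $(t_{n-1},t_n)$ and dividing by $\tau_n$ yields, with $f_n$ as in Lemma \ref{LM53},
\[
(u_n,w_h)=(f_n,w_h)+\frac1{\tau_n}\int_{t_{n-1}}^{t_n}\big(c(y(t),y(t),w_h)+(p(t),\mathrm{div}\,w_h)\big)\,dt .
\]
Since $w_h\in V_h$ annihilates $Q_h$, $(p,\mathrm{div}\,w_h)=(p-R_hp,\mathrm{div}\,w_h)$, which by Lemma \ref{LM1} is $O\!\big(h\|p\|_{H^1(\Omega)}\big)\|w_h\|_{\mathbb{H}^1(\Omega)}$; this is the sole source of the $h\|p\|_{L^2(0,T;H^1(\Omega))}$ term in \eqref{DS11}, \eqref{DS12}. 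Subtracting \eqref{DS2} from \eqref{DSE1} and using the last identity produces the error equation
\begin{multline*}
\Big(\tfrac{\eta_n-\eta_{n-1}}{\tau_n},w_h\Big)+\nu a(\eta_n,w_h)+\alpha^2a\Big(\tfrac{\eta_n-\eta_{n-1}}{\tau_n},w_h\Big)\\
=\frac1{\tau_n}\int_{t_{n-1}}^{t_n}\Big(c(y(t),y(t),w_h)-c(y_{n,h},y_{n,h},w_h)+(p(t)-R_hp(t),\mathrm{div}\,w_h)\Big)\,dt .
\end{multline*}

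\textbf{Energy estimate and conclusion.} Take $w_h=\eta_n$ and multiply by $\tau_n$. The identity $2(a-b,a)=|a|^2-|b|^2+|a-b|^2$, used in $\mathbb{L}^2(\Omega)$ and in the gradient inner product, bounds the left-hand side below by $a_n-a_{n-1}+\nu\tau_n\|\eta_n\|^2$ with $a_n=\tfrac12|\eta_n|^2+\tfrac{\alpha^2}{2}\|\eta_n\|^2$. On the right, the pressure term contributes at most $\tfrac{\nu\tau_n}{8}\|\eta_n\|^2+Ch^2\|p\|^2_{L^2(t_{n-1},t_n;H^1(\Omega))}$. For convection, bilinearity together with $y(t)-y_{n,h}=e(t)-\eta_n$, $c(y_{n,h},\eta_n,\eta_n)=0$ and $c(\eta_n,y(t),\eta_n)=-c(\eta_n,\eta_n,y(t))$ gives
\[
c(y(t),y(t),\eta_n)-c(y_{n,h},y_{n,h},\eta_n)=c(\eta_n,\eta_n,y(t))+c(e(t),y(t),\eta_n)+c(y_{n,h},e(t),\eta_n).
\]
The last two terms are $\le C(\|y\|_{C([0,T];\mathbb{H}^1(\Omega))}+M_1)\,\|e(t)\|\,\|\eta_n\|$ (this is where the uniform bound $\|y_{n,h}\|\le M_1$ enters), which after integration and Young's inequality is $\le\tfrac{\nu\tau_n}{4}\|\eta_n\|^2+C\|e\|^2_{L^2(t_{n-1},t_n;\mathbb{H}^1(\Omega))}$; the first term, $\le C\|y\|_{C([0,T];\mathbb{H}^1(\Omega))}\|\eta_n\|^2$, integrates to $\le C\tau_n\|\eta_n\|^2\le C\tau_n a_n$ and is reserved for the Gronwall step. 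Collecting, $a_n-a_{n-1}+\tfrac{\nu\tau_n}{2}\|\eta_n\|^2\le C\tau_n a_n+d_n$ with $d_n=C\|e\|^2_{L^2(t_{n-1},t_n;\mathbb{H}^1(\Omega))}+Ch^2\|p\|^2_{L^2(t_{n-1},t_n;H^1(\Omega))}$; as $a_0=0$, discrete Gronwall (again for $\tau$ below an $h$-independent threshold) gives
\[
\max_{1\le n\le N_\tau}\|\eta_n\|_{\mathbb{H}^1(\Omega)}^2+\|\eta_\sigma\|^2_{L^2(0,T;\mathbb{H}^1(\Omega))}\le C\Big(\|e\|^2_{L^2(0,T;\mathbb{H}^1(\Omega))}+h^2\|p\|^2_{L^2(0,T;H^1(\Omega))}\Big).
\]
Inserting \eqref{DS3} for $\|e\|_{L^2(0,T;\mathbb{H}^1(\Omega))}$ and then using the triangle inequality $y-y_\sigma=(y-\hat y_\sigma)+(\hat y_\sigma-y_\sigma)$ together with \eqref{DS3} yields \eqref{DS11}; combining with \eqref{DS3.1} and $\|\hat y_\sigma-y_\sigma\|_{L^\infty(0,T;\mathbb{H}^1(\Omega))}=\max_n\|\eta_n\|_{\mathbb{H}^1(\Omega)}$ (as $\eta_\sigma$ is piecewise constant in time) yields \eqref{DS12}. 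The main obstacle throughout is the convective term $c(\eta_n,\eta_n,y(t))$: controlling it requires the $\sigma$-uniform $L^\infty(0,T;\mathbb{H}^1(\Omega))$ bound on $y_\sigma$ — a consequence of the Voigt regularization, and the reason the restriction $\tau\le Ch^2$ can be dropped — together with the discrete Gronwall argument, while simultaneously one must keep the extra $h\|p\|_{L^2(0,T;H^1(\Omega))}$ contribution, arising only because $V_h$ is merely discretely divergence free, at the optimal order $O(h)$.
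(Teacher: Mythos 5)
Your proof is correct and follows essentially the same route as the paper: insert the auxiliary linear discretization $\hat y_\sigma$ of Lemma \ref{LM53}, write the error equation for the difference $y_\sigma-\hat y_\sigma$, run an energy estimate in which the pressure enters through $R_h$ and the convection splits into two terms linear in $e=y-\hat y_\sigma$ plus the quadratic term $c(\eta_n,\eta_n,\cdot)$, then close with the discrete Gronwall inequality and \eqref{DS3}, \eqref{DS3.1}. The only organizational difference is that you obtain the $\sigma$-uniform $L^\infty(0,T;\mathbb{H}^1(\Omega))$ bound on $y_\sigma$ directly from the nonlinear discrete energy identity (the convective term vanishing under the test function $y_{n,h}$) and prove uniqueness before the error estimate, whereas the paper extracts that bound a posteriori from \eqref{DS12} and proves uniqueness last; your ordering is slightly more self-contained but the mathematics is the same.
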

\begin{proof}
Set $\varepsilon=y-y_\sigma,\,e=y-\hat{y}_\sigma,\,e_\sigma=\hat{y}_\sigma-y_\sigma,$ where $\hat{y}_\sigma$ is the solution of \eqref{DS2}. Replacing $y_{n,h}$ by $\hat{y}_{n,h}-e_{n,h}$ in \eqref{DSE1} gives
\begin{multline*}
\left(\dfrac{\hat{y}_{n,h}-\hat{y}_{n-1,h}}{\tau_n},w_h\right)+\nu a(\hat{y}_{n,h},w_h)+\alpha^2 a\left(\dfrac{\hat{y}_{n,h}-\hat{y}_{n-1,h}}{\tau_n},w_h\right)\\
-\left(\dfrac{e_{n,h}-e_{n-1,h}}{\tau_n},w_h\right)-\nu a(e_{n,h},w_h)-\alpha^2 a\left(\dfrac{e_{n,h}-e_{n-1,h}}{\tau_n},w_h\right)\\
+c(y_{n,h},y_{n,h},w_h)=\dfrac{1}{\tau_n}\int_{t_{n-1}}^{t_n}(u(t),w_h)dt.
\end{multline*}
Using \eqref{DS2} we have
\begin{multline*}
\left(\dfrac{e_{n,h}-e_{n-1,h}}{\tau_n},w_h\right)+\nu a(e_{n,h},w_h)+\alpha^2 a\left(\dfrac{e_{n,h}-e_{n-1,h}}{\tau_n},w_h\right)=\\
\dfrac{1}{\tau_n}\int_{t_{n-1}}^{t_n}\{\nu a(y(t),w_h)+\alpha^2a(y'(t),w_h)+(y'(t),w_h)-(u(t),w_h)\}dt\\
+c(y_{n,h},y_{n,h},w_h).
\end{multline*}
Combining with \eqref{ST1} we get
\begin{multline*}
\left(\dfrac{e_{n,h}-e_{n-1,h}}{\tau_n},w_h\right)+\nu a(e_{n,h},w_h)+\alpha^2 a\left(\dfrac{e_{n,h}-e_{n-1,h}}{\tau_n},w_h\right)=\\
c(y_{n,h},y_{n,h},w_h)-\dfrac{1}{\tau_n}\int_{t_{n-1}}^{t_n}c(y(t),y(t),w_h)dt-\dfrac{1}{\tau_n}\int_{t_{n-1}}^{t_n}(p(t),{\rm div}\, w_h)dt.
\end{multline*}
Therefore,
\begin{multline*}
\left(e_{n,h}-e_{n-1,h},w_h\right)+\nu \int_{t_{n-1}}^{t_n}a(e_{n,h},w_h)dt+\alpha^2 a\left(e_{n,h}-e_{n-1,h},w_h\right)=\\
\int_{t_{n-1}}^{t_n}\{c(y_{n,h},y_{n,h},w_h)-c(y(t),y(t),w_h)\}dt-\int_{t_{n-1}}^{t_n}(p(t),{\rm div}\, w_h)dt.
\end{multline*}
Setting $w_h=e_{n,h}$ we obtain
\begin{align}
&\dfrac{1}{2}|e_{n,h}|^2-\dfrac{1}{2}|e_{n-1,h}|^2+\dfrac{1}{2}|e_{n,h}-e_{n-1,h}|^2+\nu\int_{t_{n-1}}^{t_n}|\nabla e_{n,h}|^2dt\notag\\
&+\dfrac{\alpha^2}{2}|\nabla e_{n,h}|^2-\dfrac{\alpha^2}{2}|\nabla e_{n-1,h}|^2+\dfrac{\alpha^2}{2}|\nabla(e_{n,h}-e_{n-1,h})|^2\notag\\
=&\int_{t_{n-1}}^{t_n}\{c(y_{n,h},y_{n,h},e_{n,h})-c(y(t),y(t),e_{n,h})\}dt-\int_{t_{n-1}}^{t_n}(p(t),{\rm div}\, e_{n,h})dt\notag\\
=&\int_{t_{n-1}}^{t_n}\{c(y_{n,h},y_{n,h},e_{n,h})-c(y(t),y(t),e_{n,h})\}dt-\int_{t_{n-1}}^{t_n}(p(t)-R_hp(t),{\rm div}\, e_{n,h})dt.\label{DS13}
\end{align}
After some algebraic computations we get for every $t\in (t_{n-1},t_n)$ that
\begin{align*}
&c(y(t),y(t),e_{n,h})-c(y_{n,h},y_{n,h},e_{n,h})\\
=&c(e(t),y(t),e_{n,h})+c(\hat{y}_{n,h},e(t),e_{n,h})+c(e_{n,h},\hat{y}_{n,h},e_{n,h})+c(y_{n,h},e_{n,h},e_{n,h})\\
=&c(e(t),y(t),e_{n,h})+c(\hat{y}_{n,h},e(t),e_{n,h})+c(e_{n,h},\hat{y}_{n,h},e_{n,h}).
\end{align*}
Hence, we get from \eqref{DS13} that
\begin{align}
&\dfrac{1}{2}|e_{n,h}|^2-\dfrac{1}{2}|e_{n-1,h}|^2+\dfrac{1}{2}|e_{n,h}-e_{n-1,h}|^2+\nu\int_{t_{n-1}}^{t_n}|\nabla e_{n,h}|^2dt\notag\\
&+\dfrac{\alpha^2}{2}|\nabla e_{n,h}|^2-\dfrac{\alpha^2}{2}|\nabla e_{n-1,h}|^2+\dfrac{\alpha^2}{2}|\nabla(e_{n,h}-e_{n-1,h})|^2\notag\\
\le\; & \int_{t_{n-1}}^{t_n}\{|c(e(t),y(t),e_{n,h})|+|c(\hat{y}_{n,h},e(t),e_{n,h})|+|c(e_{n,h},\hat{y}_{n,h},e_{n,h})|\}dt\notag\\
&+\int_{t_{n-1}}^{t_n}|(p(t)-R_hp(t),{\rm div}\,e_{n,h})|dt.\label{DS14}
\end{align}
Since $y\in W^{1,2}(0,T;V)$ and $\{\hat{y}_\sigma\}_\sigma$ is bounded in $L^\infty(0,T;\mathbb{H}^1(\Omega))$, we have
 \begin{align*}
 \int_{t_{n-1}}^{t_n}|c(e(t),y(t),e_{n,h})|dt \le & C\int_{t_{n-1}}^{t_n}|\nabla e(t)||\nabla e_{n,h}|dt\\
 \le &C\int_{t_{n-1}}^{t_n}|\nabla e(t)|^2dt+\dfrac{\nu}{8}\int_{t_{n-1}}^{t_n}|\nabla e_{n,h}|^2dt,\\
  \int_{t_{n-1}}^{t_n}|c(\hat{y}_{n,h},e(t),e_{n,h})|dt \le & C\int_{t_{n-1}}^{t_n}|\nabla e(t)||\nabla e_{n,h}|dt\\
 \le & C\int_{t_{n-1}}^{t_n}|\nabla e(t)|^2dt+\dfrac{\nu}{8}\int_{t_{n-1}}^{t_n}|\nabla e_{n,h}|^2dt,\\
  \int_{t_{n-1}}^{t_n}|c(e_{n,h},\hat{y}_{n,h},e_{n,h})|dt\le & C\int_{t_{n-1}}^{t_n}|e_{n,h}|^{1/4}|\nabla e_{n,h}|^{7/4}dt\\
 \le & C\tau_n|e_{n,h}|^2+\dfrac{\nu}{8}\int_{t_{n-1}}^{t_n}|\nabla e_{n,h}|^2dt,\\
  \int_{t_{n-1}}^{t_n}|(p(t)-R_hp(t),{\rm div}\, e_{n,h})|dt\le & C\int_{t_{n-1}}^{t_n}|p(t)-R_hp(t)|^2dt+\dfrac{\nu}{8}|\nabla e_{n,h}|^2dt.
  \end{align*}
 Putting all these estimates in \eqref{DS14} we obtain
 \begin{multline*}
 (1-C\tau_{n})|e_{n,h}|^2+|e_{n,h}-e_{n-1,h}|^2+\nu\int_{t_{n-1}}^{t_n}|\nabla e_{n,h}|^2dt\\
+\alpha^2|\nabla e_{n,h}|^2+\alpha^2|\nabla(e_{n,h}-e_{n-1,h})|^2\\
\le |e_{n-1,h}|^2+\alpha^2|\nabla e_{n-1,h}|^2+ C\int_{t_{n-1}}^{t_n}|\nabla e(t)|^2dt+C\int_{t_{n-1}}^{t_n}|p(t)-R_hp(t)|^2dt.
 \end{multline*}
 Adding these inequalities for $n=1,2,\ldots,k$, and noticing that $e_{0,h}=0$, we get
 \begin{multline*}
 (1-C\tau)|e_{k,h}|^2+\alpha^2|\nabla e_{k,h}|^2+\nu\int_0^{t_k}|\nabla e_\sigma(t)|^2dt\\
 \le \sum_{n=1}^{k-1}C\tau_n|e_{n,h}|^2+ C\int_{0}^{t_k}|\nabla e(t)|^2dt+C\int_{0}^{t_k}|p(t)-R_hp(t)|^2dt.
 \end{multline*}
 Using the discrete Gronwall inequality we have, for every $k=1,2,\ldots,N_\tau$,
 \begin{align*}
 \|e_{k,h}\|^2_{\mathbb{H}^1(\Omega)}+\int_0^{t_k}|\nabla e_\sigma(t)|^2dt
 \le  &C\int_0^T|\nabla e(t)|^2dt+C\int_0^T|p(t)-R_hp(t)|^2dt\\
 \le  &C\|e\|^2_{L^2(0,T;\mathbb{H}^1(\Omega))}+Ch^2\|p\|^2_{L^2(0,T;H^1(\Omega))}.
 \end{align*}
 This together with \eqref{DS3} imply \eqref{DS11}. By a similar argument as in the proof of \eqref{DS3.1} we get \eqref{DS12} from \eqref{DS11}.
 
 To finish the proof, we have to prove the uniqueness of a solution to \eqref{DSE1}. Assume that $y_\sigma^1, y_\sigma^2\in V_\sigma$ are two solutions of \eqref{DSE1}. Setting $y_\sigma=y_\sigma^2-y_\sigma^1$, then we need to prove that $y_\sigma=0$. Subtracting \eqref{DSE1} for $y_\sigma^2-y_\sigma^1$ and taking $w_h=y_{n,h}$ we get
 \begin{multline}
 \label{DS15}
 \left(\dfrac{y_{n,h}-y_{n-1,h}}{\tau_n},y_{n,h}\right)+\nu a(y_{n,h},y_{n,h})+\alpha^2 a\left(\dfrac{y_{n,h}-y_{n-1,h}}{\tau_n},y_{n,h}\right)\\
=-c(y_{n,h},y_{n,h}^1,y_{n,h}).
 \end{multline}
 Here, we have used the fact that 
 $$c(y^1_{n,h},y^1_{n,h},y_{n,h})-c(y^2_{n,h},y^2_{n,h},y_{n,h})=-c(y_{n,h},y_{n,h}^1,y_{n,h}).$$ 
It follows from \eqref{DS12} that $\|y^1_{\sigma}\|_{L^\infty(0,T;\mathbb{H}^1(\Omega))}\le C$, where $C$ is a constant depending only on $y,\sigma$. Hence, we can get from \eqref{DS15} that
\begin{multline*}
\dfrac{1}{2}|y_{n,h}|^2-\dfrac{1}{2}|y_{n-1,h}|^2+\dfrac{1}{2}|y_{n,h}-y_{n-1,h}|^2+\nu\tau_n|\nabla y_{n,h}|^2\\
+\dfrac{\alpha^2}{2}|\nabla y_{n,h}|^2-\dfrac{\alpha^2}{2}|\nabla y_{n-1,h}|^2+\dfrac{\alpha^2}{2}|\nabla (y_{n,h}-y_{n-1,h})|^2\\
\le C\tau_n|y_{n,h}|^{1/4}|\nabla y_{n,h}|^{7/4}\le C\tau_n|y_{n,h}|^2+\dfrac{\nu}{2}\tau_n|\nabla y_{n,h}|^2.
\end{multline*}
 Hence,
 \begin{equation*}
 (1-C\tau_n)|y_{n,h}|^2+\alpha^2|\nabla y_{n,h}|^2\le |y_{n-1,h}|^2+\alpha^2|\nabla y_{n-1,h}|^2.
 \end{equation*}
 Adding these estimates for $n=1,2,\ldots,k$, and noticing that $y_{0,h}=0$, we get
 $$(1-C\tau)|y_{k,h}|^2+\alpha^2|\nabla y_{k,h}|^2\le \sum_{n=1}^{k-1}C\tau_n|y_{n,h}|^2.$$
 Using once again the discrete Gronwall inequality we conclude that $y_\sigma=0.$
 \end{proof}
 \begin{remark}{\rm 
\label{rm1} 
 According to the proof above, the constants $C$ in \eqref{DS11} and \eqref{DS12} are dependent on $\|y\|_{W^{1,2}(0,T;V)}$, $\|\hat{y}_\sigma\|_{L^\infty(0,T;\mathbb{H}^1(\Omega))}$. However, by using Theorem \ref{tontaiNSV} and \eqref{DS7.1} we see that if $\|u\|_{L^2(0,T;\mathbb{L}^2(\Omega))}\le M$ then these constants depend only on $M$, not on $y,u$.}
 \end{remark}
 \begin{corollary}
 \label{Cor4.1}
 Assume that $\max\{\|u\|_{L^2(0,T;\mathbb{L}^2(\Omega))},\|v\|_{L^2(0,T;\mathbb{L}^2(\Omega))}\}\le M$. Denote by $y_u\in W^{1,2}(0,T;D(A))$ the unique solution of \eqref{ST2} and by $y_\sigma(v)\in V_\sigma$ the unique solution of \eqref{DSE1} corresponding to the control $v$. Then there exists a constant $C_M>0$ such that
 \begin{equation}
 \label{DS15.2}
 \|y_u-y_\sigma(v)\|_{L^\infty(0,T;\mathbb{H}^1(\Omega))}\le C_M\{h+\tau+\sqrt{\tau}+\|u-v\|_{L^2(0,T;\mathbb{L}^2(\Omega))}\}.
 \end{equation}
 Moreover, if $u_\sigma\in U_\sigma$ and $u_\sigma\rightharpoonup u$ in $L^2(0,T;\mathbb{L}^2(\Omega_h))$ as $\sigma\to 0$ then
 \begin{equation}
 \label{DS15.3}
 \|y_u-y_\sigma(u_\sigma)\|_{L^\infty(0,T;\mathbb{H}^1(\Omega))}\to 0 \text{ as } \sigma\to 0,
 \end{equation}
 \begin{equation}
 \label{DS15.3.0}
 \|y_u(T)-y_\sigma(u_\sigma)(T)\|_{\mathbb{H}^1(\Omega)}\to 0 \text{ as } \sigma\to 0.
 \end{equation}
 \end{corollary}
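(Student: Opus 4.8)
The plan is to deduce all three assertions by comparing the fully discrete state $y_\sigma(\cdot)$ with the corresponding \emph{continuous} state $G(\cdot)$, and then invoking the \textit{a priori} estimate \eqref{DS12}, Remark~\ref{rm1}, and the regularity results of Section~2. For \eqref{DS15.2} I would write, with $y_v:=G(v)$,
\[
\|y_u-y_\sigma(v)\|_{L^\infty(0,T;\mathbb{H}^1(\Omega))}\le \|y_u-y_v\|_{L^\infty(0,T;\mathbb{H}^1(\Omega))}+\|y_v-y_\sigma(v)\|_{L^\infty(0,T;\mathbb{H}^1(\Omega))}.
\]
The second term is exactly estimated by \eqref{DS12} applied to the control $v$: by Remark~\ref{rm1} the constant is $M$-uniform, and by Theorem~\ref{THR2.2} together with the embedding $W^{1,2}(0,T;D(A))\hookrightarrow C([0,T];D(A))$ the quantities $\|y_v'\|_{L^2(0,T;\mathbb{H}^1(\Omega))}$, $\|y_v\|_{C([0,T];\mathbb{H}^2(\Omega))}$ and $\|p_v\|_{L^2(0,T;H^1(\Omega))}$ are all bounded by $C_M$, so this term is $\le C_M(h+\tau+\sqrt{\tau})$. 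For the first term, since $\|u\|_{L^2}\le M$ and $\|v\|_{L^2}\le M$, Theorem~\ref{tontaiNSV} bounds $\|G(w)\|_{W^{1,2}(0,T;V)}$ by $C_M$ for every $w$ on the segment $[u,v]$, whence \eqref{DGE2.1} gives $\|G'(w)\|\le C_M$ there; the mean value inequality and the embedding $W^{1,2}(0,T;V)\hookrightarrow C([0,T];V)$ then yield $\|y_u-y_v\|_{L^\infty(0,T;\mathbb{H}^1(\Omega))}\le C_M\|u-v\|_{L^2(0,T;\mathbb{L}^2(\Omega))}$. Adding the two bounds proves \eqref{DS15.2}. (Equivalently, this first term can be handled directly by subtracting the two copies of \eqref{ST2}, testing the difference $w=y_u-y_v$ against $w$, using $c(y_v,w,w)=0$, the bound $|c(w,y_u,w)|\le C|w|^{1/4}\|w\|^{7/4}\|y_u\|$ from Lemma~\ref{trilinear}, Young's inequality, $w(0)=0$ and $\|y_u\|_{L^\infty(0,T;V)}\le C_M$ via \eqref{reg1.2}, and then Gr\"onwall's lemma.)

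For \eqref{DS15.3}, the weak convergence $u_\sigma\rightharpoonup u$ forces $\sup_\sigma\|u_\sigma\|_{L^2}\le M$ for some $M$, and also $\|u\|_{L^2}\le M$. I would again split, with $y_{u_\sigma}:=G(u_\sigma)$,
\[
\|y_u-y_\sigma(u_\sigma)\|_{L^\infty(0,T;\mathbb{H}^1(\Omega))}\le \|y_u-y_{u_\sigma}\|_{L^\infty(0,T;\mathbb{H}^1(\Omega))}+\|y_{u_\sigma}-y_\sigma(u_\sigma)\|_{L^\infty(0,T;\mathbb{H}^1(\Omega))}.
\]
By \eqref{DS12}, Remark~\ref{rm1} and the $M$-uniform bounds of Theorem~\ref{THR2.2}, the last term is $\le C_M(h+\tau+\sqrt{\tau})\to0$. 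For the first term I would invoke Theorem~\ref{THR2.3}: $y_{u_\sigma}\rightharpoonup y_u$ in $W^{1,2}(0,T;D(A))$, hence by the compact embedding $W^{1,2}(0,T;\mathbb{H}^1(\Omega))\hookrightarrow\hookrightarrow\mathbb{L}^2(Q)$ one obtains (first along a subsequence, then for the whole sequence since the limit is unique) $y_{u_\sigma}\to y_u$ strongly in $\mathbb{L}^2(Q)$. Running the same energy estimate as above for $w=y_u-y_{u_\sigma}$, but this time \emph{not} applying Young to the forcing term: keep $\int_0^t(u-u_\sigma,w)\,ds$ and bound it by $\|u-u_\sigma\|_{\mathbb{L}^2(Q)}\|w\|_{\mathbb{L}^2(Q)}$, a bound independent of $t$ that tends to $0$. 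Gr\"onwall then gives $\|y_u-y_{u_\sigma}\|_{L^\infty(0,T;\mathbb{H}^1(\Omega))}^2\le C_M\|u-u_\sigma\|_{\mathbb{L}^2(Q)}\|y_u-y_{u_\sigma}\|_{\mathbb{L}^2(Q)}\to0$, which proves \eqref{DS15.3}.

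Finally, \eqref{DS15.3.0} is immediate from \eqref{DS15.3}: on $(t_{N_\tau-1},T)$ the function $y_\sigma(u_\sigma)$ is constant and equal to $y_\sigma(u_\sigma)(T)$, while $y_u\in C([0,T];V)$, so letting $t\to T^-$ in $\|y_u(t)-y_\sigma(u_\sigma)(t)\|_{\mathbb{H}^1(\Omega)}\le\|y_u-y_\sigma(u_\sigma)\|_{L^\infty(0,T;\mathbb{H}^1(\Omega))}$ yields $\|y_u(T)-y_\sigma(u_\sigma)(T)\|_{\mathbb{H}^1(\Omega)}\le\|y_u-y_\sigma(u_\sigma)\|_{L^\infty(0,T;\mathbb{H}^1(\Omega))}\to0$. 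I expect the one genuinely delicate point to be the first term in the split for \eqref{DS15.3}: one cannot use \eqref{DS15.2} there because $u_\sigma\to u$ only weakly, so the strong $L^\infty(0,T;\mathbb{H}^1(\Omega))$-convergence of $y_{u_\sigma}$ must be extracted by combining the compactness from Theorem~\ref{THR2.3} (which is what kills the forcing term) with a Gr\"onwall argument controlling the convective nonlinearity; the remaining ingredients are routine applications of the triangle inequality, \eqref{DS12}, Remark~\ref{rm1} and the Section~2 regularity bounds.
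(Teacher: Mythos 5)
Your proof is correct. For \eqref{DS15.2} it coincides with the paper's argument: the same triangle-inequality split, with $\|y_v-y_\sigma(v)\|_{L^\infty(0,T;\mathbb{H}^1(\Omega))}\le C_M(h+\tau+\sqrt{\tau})$ obtained from \eqref{DS12}, \eqref{Pre1.0} and Remark \ref{rm1}, and the Lipschitz bound $\|G(u)-G(v)\|\le C_M\|u-v\|_{L^2(0,T;\mathbb{L}^2(\Omega))}$ from the mean value theorem together with \eqref{DGE2.1} and \eqref{ST2.1}. Where you genuinely diverge is in the two convergence statements. For \eqref{DS15.3} the paper kills the term $\|y_u-y_{u_\sigma}\|_{L^\infty(0,T;\mathbb{H}^1(\Omega))}$ in one stroke by applying the compact embedding $W^{1,2}(0,T;D(A))\hookrightarrow L^\infty(0,T;\mathbb{H}^1(\Omega))$ to the weakly convergent sequence supplied by Theorem \ref{THR2.3}; you instead use only the weaker compact embedding into $\mathbb{L}^2(Q)$ (one of the embeddings explicitly listed in Section 2) and then upgrade the strong $\mathbb{L}^2(Q)$-convergence to $L^\infty(0,T;\mathbb{H}^1(\Omega))$ via an energy/Gr\"onwall bootstrap in which the forcing term is estimated as $\|u-u_\sigma\|_{\mathbb{L}^2(Q)}\|y_u-y_{u_\sigma}\|_{\mathbb{L}^2(Q)}$, a product of a bounded factor and a vanishing one. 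This costs a few extra lines but is more self-contained, since the stronger compactness the paper invokes is not among those it states in the preliminaries; your handling of the whole-sequence issue (subsequence-of-subsequence plus uniqueness of the limit) is also the right way to make that step airtight. For \eqref{DS15.3.0} the paper argues through $y_{u_\sigma}(T)\rightharpoonup y_u(T)$ in $D(A)$, the compact embedding $D(A)\hookrightarrow\mathbb{H}^1(\Omega)$, and \eqref{DS11} evaluated at $t_{N_\tau}=T$; your derivation of it as a direct limit $t\to T^-$ of \eqref{DS15.3}, using that $y_\sigma(u_\sigma)$ is constant equal to its terminal value on the last subinterval while $y_u\in C([0,T];V)$, is shorter, avoids any further compactness, and is equally valid.
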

 \begin{proof}
 We have
 \begin{align*}
 \|y_u-y_\sigma(v)\|_{L^\infty(0,T;\mathbb{H}^1(\Omega))}&\le \|y_u-y_v\|_{L^\infty(0,T;\mathbb{H}^1(\Omega))}+\|y_v-y_\sigma(v)\|_{L^\infty(0,T;\mathbb{H}^1(\Omega))}\\
 &= \|G(u)-G(v)\|_{L^\infty(0,T;\mathbb{H}^1(\Omega))} + \|y_v-y_\sigma(v)\|_{L^\infty(0,T;\mathbb{H}^1(\Omega))}.
 \end{align*}
 From \eqref{DS12}, \eqref{Pre1.0} and Remark \ref{rm1} we have
\begin{equation}
\label{DS15.3.1}
\|y_v-y_\sigma(v)\|_{L^\infty(0,T;\mathbb{H}^1(\Omega))}\le C_M(h+\tau+\sqrt{\tau}).
\end{equation} 
In addition, the control-to-state mapping $G$ is of class $C^2$, so we can use mean value theorem, \eqref{DGE2.1}, and \eqref{ST2.1}  to get \eqref{DS15.2}.

Next, we are going to prove \eqref{DS15.3}. We have
\begin{equation}
\label{DS15.4}
\|y_u-y_\sigma(u_\sigma)\|_{L^\infty(0,T;\mathbb{H}^1(\Omega))}\le \|y_u-y_{u_\sigma}\|_{L^\infty(0,T;\mathbb{H}^1(\Omega))} + \|y_{u_\sigma}-y_\sigma(u_\sigma)\|_{L^\infty(0,T;\mathbb{H}^1(\Omega))}.
\end{equation}
Since $u_\sigma\rightharpoonup u$ in $L^2(0,T;\mathbb{L}^2(\Omega_h))$ as $\sigma\to 0$ we get the boundedness of the sequence $\{u_\sigma\}_\sigma$ in the space $L^2(0,T;\mathbb{L}^2(\Omega_h))$. Then, from \eqref{DS15.3.1} we get that the second term in the right-hand side of \eqref{DS15.4} tends to $0$ as $\sigma\to 0$. By Theorem \ref{THR2.3}, we can extract from sequence $\{y_{u_\sigma}\}_\sigma$ a subsequence denoted in the same way such that
\begin{equation}
\label{DS15.5}
y_{u_\sigma}\rightharpoonup y_u \text{ in } W^{1,2}(0,T;D(A)).
\end{equation}
Since the embedding $W^{1,2}(0,T;D(A))\hookrightarrow L^\infty(0,T;\mathbb{H}^1(\Omega))$ is compact, we get
$$\|y_u-y_{u_\sigma}\|_{L^\infty(0,T;\mathbb{H}^1(\Omega))}\to 0 \text{ as } \sigma\to 0,$$
and \eqref{DS15.3} is proved. It follows from \eqref{DS15.5} that $y_{u_\sigma}(T)\rightharpoonup y_u(T)$ in $D(A)$. On the other hand, since $D(A)$ is compactly embedded in $\mathbb{H}^1(\Omega)$, then we have 
$$\|y_{u_\sigma}(T)-y_u(T)\|_{\mathbb{H}^1(\Omega)}\to 0.$$
This together with \eqref{DS11} imply \eqref{DS15.3.0}. The proof is complete.
 \end{proof}
 \begin{theorem}
 \label{THRDiffG}
 The mapping $G_\alpha: L^2(0,T;\mathbb{L}^2(\Omega))\to V_\sigma$ defined by $G_\sigma(u)=y_\sigma(u)$, the solution of \eqref{DSE1}, is of class $C^\infty$. Moreover, $z_\sigma(v)=G'_\sigma(u)v$ is the unique solution of the following problem
 \begin{equation}
 \label{DifDSE}
 \begin{cases}
 \left(\dfrac{z_{n,h}-z_{n-1,h}}{\tau_n},w_h\right) + \nu a(z_{n,h},w_h) +\alpha^2 a\left(\dfrac{z_{n,h}-z_{n-1,h}}{\tau_n},w_h\right)\\
 \hspace{1cm}+c(z_{n,h},y_{n,h},w_h)+c(y_{n,h},z_{n,h},w_h)=\dfrac{1}{\tau_n}\int_{t_{n-1}}^{t_n}(v(t),w_h)dt,\\
 \hspace{6cm}\forall w_h\in V_h,\;\forall n=1,2,\ldots,N_\tau,\\
  z_{0,h}=0,
 \end{cases}
 \end{equation}
 where we have set $y_\sigma=y_\sigma(u).$
 \end{theorem}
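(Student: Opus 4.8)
The plan is to regard \eqref{DSE1} as a finite--dimensional equation, parametrized by the control, and to invoke the implicit function theorem. First I would collect the unknowns into a single vector $Y=(y_{1,h},\dots,y_{N_\tau,h})\in V_h^{N_\tau}$ (the value $y_{0,h}=P_hy_0$ being fixed) and, after identifying $V_h$ with its dual via the $\mathbb{L}^2$ inner product, define a map $\mathcal{F}:V_h^{N_\tau}\times L^2(0,T;\mathbb{L}^2(\Omega))\to V_h^{N_\tau}$ whose $n$-th component is the Riesz representative of the functional
\begin{multline*}
w_h\mapsto\Big(\frac{y_{n,h}-y_{n-1,h}}{\tau_n},w_h\Big)+\nu a(y_{n,h},w_h)+\alpha^2a\Big(\frac{y_{n,h}-y_{n-1,h}}{\tau_n},w_h\Big)\\
+c(y_{n,h},y_{n,h},w_h)-\frac{1}{\tau_n}\int_{t_{n-1}}^{t_n}(u(t),w_h)\,dt .
\end{multline*}
Then $Y$ solves $\mathcal{F}(Y,u)=0$ exactly when the associated $y_\sigma\in V_\sigma$ solves \eqref{DSE1}; by the previous theorem such a $Y$ exists and is unique, so $G_\sigma$ is well defined. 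Since every term is either linear in $Y$, or the fixed quadratic form $c(y_{n,h},y_{n,h},\cdot)$, or the bounded linear functional $u\mapsto\tau_n^{-1}\int_{t_{n-1}}^{t_n}(u,\cdot)\,dt$, the map $\mathcal{F}$ is a degree-two polynomial in $Y$ which is affine in $u$; in particular it is $C^\infty$ (indeed real analytic), and this is precisely why one obtains $C^\infty$ regularity of $G_\sigma$ rather than merely $C^2$ as for the continuous control-to-state map $G$.

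The central point would be to show that the partial derivative $D_Y\mathcal{F}(Y,u)$, evaluated at a solution $Y=Y(u)$, is an isomorphism of $V_h^{N_\tau}$. A direct computation shows that it is block lower bidiagonal --- the $n$-th equation depends only on $y_{n,h}$ and $y_{n-1,h}$ --- so it is invertible if and only if each diagonal block $B_n:V_h\to V_h'$, given by
\[
\langle B_nz,w\rangle=\frac{1}{\tau_n}(z,w)+\Big(\nu+\frac{\alpha^2}{\tau_n}\Big)a(z,w)+c(z,y_{n,h},w)+c(y_{n,h},z,w),
\]
is invertible. Since $V_h$ is finite dimensional it suffices to verify $\ker B_n=\{0\}$: assuming $B_nz=0$ and testing with $w=z$, the relations $c(y_{n,h},z,z)=0$ and $c(z,y_{n,h},z)=-c(z,z,y_{n,h})$ give $\tau_n^{-1}|z|^2+(\nu+\alpha^2\tau_n^{-1})\|z\|^2=c(z,z,y_{n,h})$, and estimating the right-hand side via Lemma \ref{trilinear} together with Young's inequality --- precisely as in the uniqueness part of the previous theorem, using the $h$-independent bound on $\|y_{n,h}\|$ coming from \eqref{DS12}, so that no hypothesis $\tau\le Ch^2$ is needed --- forces $z=0$. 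Equivalently, one may argue directly on the full homogeneous linearized system: an element of $\ker D_Y\mathcal{F}(Y(u),u)$ solves the homogeneous analogue of \eqref{DifDSE} with $z_{0,h}=0$, and the same energy estimate followed by the discrete Gronwall inequality shows that it vanishes.

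Granting this, the implicit function theorem yields that $u\mapsto Y(u)$ is $C^\infty$ in a neighbourhood of every $u$; global uniqueness of the solution of \eqref{DSE1} then upgrades this to $C^\infty$ on all of $L^2(0,T;\mathbb{L}^2(\Omega))$, so $G_\sigma\in C^\infty$. To identify the derivative, I would differentiate the identity $\mathcal{F}(Y(u),u)=0$ in a direction $v$, obtaining $D_Y\mathcal{F}(Y(u),u)[Y'(u)v]=-D_u\mathcal{F}(Y(u),u)[v]$: since $u$ enters $\mathcal{F}$ only through the right-hand side, $D_u\mathcal{F}[v]$ simply replaces $u$ by $v$, while linearizing the quadratic term produces $c(z_{n,h},y_{n,h},w_h)+c(y_{n,h},z_{n,h},w_h)$; writing $z_\sigma(v):=G_\sigma'(u)v$ for the element of $V_\sigma$ assembled from $Y'(u)v$ with $z_{0,h}=0$, this is exactly system \eqref{DifDSE}, whose unique solvability is nothing but the invertibility of $D_Y\mathcal{F}(Y(u),u)$ established above.

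The hard part is the invertibility of the diagonal blocks $B_n$, i.e. absorbing the non sign-definite trilinear perturbation $c(\cdot,y_{n,h},\cdot)+c(y_{n,h},\cdot,\cdot)$; this is handled by the same energy/Young/discrete-Gronwall mechanism already employed above to prove uniqueness for \eqref{DSE1}. Everything else is routine, because once $Y$ is taken as the unknown the problem is finite dimensional and affine in the control.
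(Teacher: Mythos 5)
Your proposal is correct and follows essentially the same route as the paper: set up the residual map, note it is polynomial in the state and affine in the control (hence $C^\infty$), apply the implicit function theorem after checking injectivity of the state-linearization in finite dimensions via the energy estimate $c(z,y_{n,h},z)=-c(z,z,y_{n,h})$ plus Young and the discrete Gronwall inequality, and read off \eqref{DifDSE} by differentiating $\mathcal{F}(Y(u),u)=0$. The only (cosmetic) differences are that the paper keeps $y_{0,h}$ as an unknown with the extra equation $y_{0,h}-P_hy_0=0$ and argues injectivity directly on the full homogeneous system rather than block-by-block; both variants carry the same implicit smallness requirement on $\tau$ hidden in the factor $1-C\tau_n$.
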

 \begin{proof}
 We consider the mapping $F_\sigma: V_\sigma\times L^2(0,T;\mathbb{L}^2(\Omega))\to {V'_h}^{N_\tau}\times V_h$ defined by $F_\sigma(y_\sigma,u)=(g_1,\ldots,g_{N_{\tau}},y_{0,h}-P_hy_0)$, where
 \begin{align*}
 \langle g_n,w_h\rangle=&(y_{n,h}-y_{n-1,h},w_h)+\nu\tau_na(y_{n,h},w_h)+\alpha^2a(y_{n,h}-y_{n-1,h},w_h)\\
 &+\tau_nc(y_{n,h},y_{n,h},w_h)
 -\int_{t_{n-1}}^{t_n}(u(t),w_h)dt,\;\;\;\forall w_h\in V_h,\;\forall n=1,2,\ldots,N_\tau.
 \end{align*}
 We can easily check that $F_\sigma$ is of class $C^\infty$ and for an arbitrary $e_\sigma\in V_\sigma$ we have
 \begin{equation}
 \label{DS15.1}
  \dfrac{\partial F_\sigma}{\partial y_\sigma}(y_\sigma,u)e_\sigma
 = (f_1,\ldots,f_{N_\tau},e_{0,h}),
 \end{equation}
 where $f_n\in V'_h$ is defined by
 \begin{multline}
 \label{DS16}
 \langle f_n,w_h\rangle =(e_{n,h}-e_{n-1,h},w_h)+\nu\tau_na(e_{n,h},w_h)
 +\alpha^2a(e_{n,h}-e_{n-1,h},w_h)\\
 +\tau_nc(y_{n,h},e_{n,h},w_h)+\tau_nc(e_{n,h},y_{n,h},w_h),\quad\forall w_h\in V_h,\;\forall n= 1,2, \ldots,N_\tau.
 \end{multline}
 On the other hand, $F_\sigma(G_\sigma(u),u)=F_\sigma(y_\sigma(u),u)=0$ for every $u\in L^2(0,T;\mathbb{L}^2(\Omega))$. The proof is a consequence of the implicit function; we need to prove that $\dfrac{\partial F_\sigma}{\partial y_\sigma}(y_\sigma,u):V_\sigma\to {V'_h}^{N_\tau}\times V_h$ is an isomorphism for every $(y_\sigma,u)\in V_\sigma\times L^2(0,T;\mathbb{L}^2(\Omega))$. Since $V_\sigma$ and ${V'_h}^{N_\tau}\times V_h$ are spaces with the same finite dimension, we only need to prove that $\dfrac{\partial F_\sigma}{\partial y_\sigma}(y_\sigma,u)$ is injective. Suppose that $\dfrac{\partial F_\sigma}{\partial y_\sigma}(y_\sigma,u)e_\sigma=0$ for some $e_\sigma\in V_\sigma$, then \eqref{DS15.1} implies that $e_{0,h}=0$. Using \eqref{DS16} with $w_h=e_{n,h}$ we have
 \begin{multline*}
 (e_{n,h}-e_{n-1,h},e_{n,h})+\nu\tau_na(e_{n,h},e_{n,h})
 +\alpha^2a(e_{n,h}-e_{n-1,h},e_{n,h})\\+\tau_nc(e_{n,h},y_{n,h},e_{n,h})=0.
 \end{multline*}
 Hence,
 \begin{multline*}
 \dfrac{1}{2}|e_{n,h}|^2-\dfrac{1}{2}|e_{n-1,h}|^2+\dfrac{1}{2}|e_{n,h}-e_{n-1,h}|^2+\nu\tau_n|\nabla e_{n,h}|^2
 +\dfrac{\alpha^2}{2}|\nabla e_{n,h}|^2\\
 -\dfrac{\alpha^2}{2}|\nabla e_{n-1,h}|^2+\dfrac{\alpha^2}{2}|\nabla (e_{n,h}-e_{n-1,h})|^2
 \le C\tau_n|e_{n,h}|^{1/4}|\nabla e_{n,h}|^{7/4}|\nabla y_{n,h}|\\
 \le \dfrac{\nu}{2}\tau_n|\nabla e_{n,h}|^2 + C\tau_n |e_{n,h}|^2.
 \end{multline*}
 Here, the constant $C$ depends on $\nu$ and $\|y_\sigma\|_{L^\infty(0,T;\mathbb{H}^1(\Omega))}$. Again, by using the discrete Gronwall inequality and noticing that $e_{0,h}=0$, we obtain $e_\sigma=0$.
 \end{proof}
\subsection{Analysis of the discrete adjoint state equations}
It follows from Theorem \ref{THRDiffG} and the chain rule that the functional $J_\sigma: L^2(0,T;\mathbb{L}^2(\Omega))\to \mathbb{R}$ is of class $C^\infty$, and its derivative is given by
\begin{multline*}
J'_\sigma(u)v=\alpha_T\int_{\Omega_h}(y_\sigma(T)-y^h_T)z_\sigma(T)dx + \alpha_Q\int_0^T\int_{\Omega_h}(y_\sigma-y_Q)z_\sigma dxdt\\
+\gamma\int_0^T\int_{\Omega_h}uvdxdt,
\end{multline*}
where $y_\sigma=y_\sigma(u)=G_\sigma(u)$ and $z_\sigma=G'_\sigma(u)v$ is the solution of \eqref{DifDSE}. 

To study the discrete adjoint state equation, we are going to introduce the space $V_\sigma^r$. For each given sequence $(\lambda_{1,h},\lambda_{2,h}, \ldots,\lambda_{N_\tau+1,h})\in V_h^{N_\tau+1}$, we define a function $\lambda_\sigma:[0,T]\to V_h$ by
\begin{equation*}
\begin{cases}
\lambda_\sigma(t_n)=\lambda_{n+1,h},\;n=0,1,\ldots,N_\tau,\\
\lambda_\sigma(t)=\lambda_{n,h}, \;\forall t\in (t_{n-1},t_n).
\end{cases}
\end{equation*}
We denote by $V_\sigma^r$ the set of all functions that are defined by this way.

Now, we consider the discrete adjoint state equation: 

{\it Find} $\lambda_\sigma\in V_\sigma^r$ {\it such that}
\begin{equation}
\label{ADE1}
\begin{cases}
\left(\dfrac{\lambda_{n,h}-\lambda_{n+1,h}}{\tau_n},w_h\right) + \nu a(\lambda_{n,h},w_h)+\alpha^2a\left(\dfrac{\lambda_{n,h}-\lambda_{n+1,h}}{\tau_n},w_h\right)\\
+c(w_h,y_{n,h},\lambda_{n,h})+c(y_{n,h},w_h,\lambda_{n,h})=\dfrac{\alpha_Q}{\tau_n}\int_{t_{n-1}}^{t_n}(y_{n,h}-y_Q(t),w_h)dt,\\
\hspace{6cm}\forall w_h\in V_h,\;\forall n=1,2,\ldots,N_\tau,\\
(\lambda_{N_\tau+1,h},w_h)+\alpha^2a(\lambda_{N_\tau+1,h},w_h)=\alpha_T(y_{N_\tau,h}-y_T^h,w_h),\quad\forall w_h\in V_h.
\end{cases}
\end{equation}
In this system, first we compute $\lambda_{N_\tau+1,h}$ from the last equation in \eqref{ADE1}, then we descend in $n$ until $n=1$. We can prove the existence and uniqueness of a solution to \eqref{ADE1} by a similar way that we did for the system \eqref{DSE1}. Now, we are going to check that \eqref{ADE1} is actually the discrete adjoint state equation. Indeed, it follows from \eqref{DifDSE} and \eqref{ADE1} that
\begin{align*}
&\alpha_Q\int_0^T\int_{\Omega_h}(y_\sigma-y_Q)z_\sigma dxdt\\
&=\sum_{n=1}^{N_\tau}\alpha_Q\int_{t_{n-1}}^{t_n}(y_{n,h}-y_Q(t),z_{n,h})dt=\sum_{n=1}^{N_\tau}[(\lambda_{n,h}-\lambda_{n+1,h},z_{n,h})+\nu\tau_na(\lambda_{n,h},z_{n,h})]\\
&+\sum_{n=1}^{N_\tau}[\alpha^2a(\lambda_{n,h}-\lambda_{n+1,h},z_{n,h})+\tau_nc(z_{n,h},y_{n,h},\lambda_{n,h})+\tau_nc(y_{n,h},z_{n,h},\lambda_{n,h})]\\
&=\sum_{n=1}^{N_\tau}[(z_{n,h}-z_{n-1,h},\lambda_{n,h})+\nu\tau_na(z_{n,h},\lambda_{n,h})+\alpha^2a(z_{n,h}-z_{n-1,h},\lambda_{n,h})]\\
&+\sum_{n=1}^{N_\tau}[\tau_nc(z_{n,h},y_{n,h},\lambda_{n,h})+\tau_nc(y_{n,h},z_{n,h},\lambda_{n,h})]\\
&\hspace{6cm}-(\lambda_{N_\tau+1,h},z_{N_\tau,h})-\alpha^2a(\lambda_{N_\tau+1,h},z_{N_\tau,h})\\
&=\int_{0}^T\int_{\Omega_h}v\lambda_\sigma dxdt -\alpha_T\int_{\Omega_h}(y_{N_{\tau},h}-y_T^h)z_{N_\tau,h}dx\\
&=\int_{0}^T\int_{\Omega_h}v\lambda_\sigma dxdt -\alpha_T\int_{\Omega_h}(y_\sigma(T)-y_T^h)z_\sigma(T)dx.
\end{align*}
Here, we have used the fact that $z_{0,h}=0$. Hence,
\begin{equation*}
J'_\sigma(u)v=\int_0^T\int_{\Omega_h}(\lambda_\sigma + \gamma u)v dxdt.
\end{equation*}
The next theorem gives us the error estimates when approximating the adjoint state equation.
\begin{theorem}
Given $u\in L^2(0,T;\mathbb{L}^2(\Omega))$, let $(y,p)$ be the solution of \eqref{ST1}, $(\lambda,q,r)$ be the unique weak solution of \eqref{DGE3}, $y_\sigma=y_\sigma(u)$ be the associated discrete state, solution of \eqref{DSE1}, and $\lambda_\sigma$ be the associated discrete adjoint state, solution of \eqref{ADE1}. Then $\{\lambda_\sigma\}_\sigma$ is bounded in $L^\infty(0,T;\mathbb{H}^1(\Omega))$ and there exists a constant $C>0$ independent of $\sigma$ such that
\begin{equation}\label{ADE3}
\begin{aligned}
\|\lambda-\lambda_\sigma\|_{L^\infty(0,T;\mathbb{H}^1(\Omega))}\le &C\big\{\tau\|y'\|_{L^2(0,T;\mathbb{H}^1(\Omega))}+h\|y\|_{C([0,T];\mathbb{H}^2(\Omega))}\\\
&+h\|p\|_{L^2(0,T;H^1(\Omega))}+h\|r\|_{H^1(\Omega)}+h\|\lambda\|_{C([0,T];\mathbb{H}^2(\Omega))}\\
&+(\tau+\sqrt{\tau})\|\lambda'\|_{L^2(0,T;\mathbb{H}^1(\Omega))}
 +h\|q\|_{L^2(0,T;H^1(\Omega))}+h\big\}.
\end{aligned} 
\end{equation}
\end{theorem}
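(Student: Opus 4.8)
The plan is to adapt the two-step argument behind the estimates \eqref{DS11}--\eqref{DS12} for the discrete state equation, now reading the backward-in-time discrete adjoint system \eqref{ADE1} as a perturbation of an auxiliary linear discrete problem built from the \emph{exact} state $y$.

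First I would introduce $\hat{\lambda}_\sigma\in V_\sigma^r$, the solution of the same backward scheme as \eqref{ADE1} but with $y_{n,h}$ replaced by $y(t)$ (time-averaged over $(t_{n-1},t_n)$ in the trilinear and source terms, exactly as the data $f_n$ were chosen in Lemma \ref{LM53}), and with the terminal pressure $r$ discretized by $R_hr$. Solvability is immediate by Lax--Milgram, descending in $n$ from the terminal equation. The analog of Lemma \ref{LM53} for $\hat\lambda_\sigma$, proved by the same telescoping energy estimate with the splitting $\lambda-\hat\lambda_\sigma=(\lambda-P_\sigma\lambda)+(P_\sigma\lambda-\hat\lambda_\sigma)$, Lemma \ref{LM2}, assumption (A2), and the regularity $\lambda\in W^{1,2}(0,T;D(A))$, $q\in L^2(0,T;H^1(\Omega))$, $r\in H^1(\Omega)$ recalled in Section 2, would yield both the boundedness of $\{\hat\lambda_\sigma\}_\sigma$ in $L^\infty(0,T;\mathbb{H}^1(\Omega))$ and
\[
\|\lambda-\hat\lambda_\sigma\|_{L^\infty(0,T;\mathbb{H}^1(\Omega))}\le C\{(\tau+\sqrt\tau)\|\lambda'\|_{L^2(0,T;\mathbb{H}^1(\Omega))}+h\|\lambda\|_{C([0,T];\mathbb{H}^2(\Omega))}+h\|q\|_{L^2(0,T;H^1(\Omega))}+h\|r\|_{H^1(\Omega)}\},
\]
which supplies the $\lambda$-, $q$- and $r$-terms on the right of \eqref{ADE3}.

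The second and main step is to estimate $e_\sigma:=\hat\lambda_\sigma-\lambda_\sigma$. Subtracting \eqref{ADE1} from the auxiliary scheme, the only discrepancies are the trilinear coefficient ($y(t)$ versus $y_{n,h}$) in the slots $c(w_h,\cdot,\cdot)$ and $c(\cdot,w_h,\cdot)$, the source $\alpha_Q(y(t)-y_Q,w_h)$ versus $\alpha_Q(y_{n,h}-y_Q,w_h)$, and the terminal data $\alpha_T(y(T)-y_T,w_h)$ versus $\alpha_T(y_{N_\tau,h}-y_T^h,w_h)$. Writing $\varepsilon:=y-y_\sigma$, I would split each discrepancy into a frozen-error source (carrying $\varepsilon(t)$, or $y(T)-y_\sigma(T)$, or $y_T-y_T^h$, multiplied by the \emph{bounded} $\hat\lambda_{n,h}$) plus a genuine-error part (carrying $e_\sigma$), then test with $w_h=e_{n,h}$. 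The term $c(y_{n,h},e_{n,h},e_{n,h})$ vanishes; $c(e_{n,h},y_{n,h},e_{n,h})$ does not, but is bounded by $C|e_{n,h}|^{1/4}\|e_{n,h}\|^{7/4}\|y_{n,h}\|\le\frac{\nu}{2}\|e_{n,h}\|^2+C|e_{n,h}|^2$ through the sharp trilinear inequality of Lemma \ref{trilinear} and the uniform $\mathbb{H}^1$-bound on $\{y_{n,h}\}$; the frozen-error sources are absorbed using $|c(u,v,w)|\le C\|u\|\|v\|\|w\|$ together with $\|\varepsilon\|_{L^\infty(0,T;\mathbb{H}^1(\Omega))}\le C(\tau+\sqrt\tau+h\|y\|_{C([0,T];\mathbb{H}^2(\Omega))}+h\|p\|_{L^2(0,T;H^1(\Omega))})$ from \eqref{DS12} and $\|y_T-y_T^h\|_{\mathbb{L}^2(\Omega_h)}\le Ch$. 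Summing from $n=N_\tau$ down to $n=k$, the $a_\alpha$-contributions telescope as in the state proof (the $n=N_\tau$ boundary term producing the $\alpha_T$ pieces), and one reaches an inequality of the type $(1-C\tau)\|e_{k,h}\|_{\mathbb{H}^1(\Omega)}^2+\nu\int_{t_k}^T|\nabla e_\sigma|^2\,dt\le\sum_{n>k}C\tau_n\|e_{n,h}\|_{\mathbb{H}^1(\Omega)}^2+C\|\varepsilon\|_{L^\infty(0,T;\mathbb{H}^1(\Omega))}^2+(\text{projection terms})$; the discrete Gronwall inequality (applied in reverse) then closes it. The triangle inequality $\|\lambda-\lambda_\sigma\|\le\|\lambda-\hat\lambda_\sigma\|+\|e_\sigma\|$ gives \eqref{ADE3}, and the uniform bound on $\{\lambda_\sigma\}_\sigma$ follows from that on $\{\hat\lambda_\sigma\}_\sigma$.

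I expect the delicate point to be the bookkeeping in the second step: since the adjoint system is backward in time and the convective terms sit in the slots $c(w_h,y,\lambda)$ and $c(y,w_h,\lambda)$ rather than $c(y,\lambda,w_h)$, one must check that each genuine-error contribution either vanishes (like $c(y_{n,h},e_{n,h},e_{n,h})=0$) or is of strictly lower order, i.e. absorbable into $\frac{\nu}{2}\|e_{n,h}\|^2$ at the price of a $C|e_{n,h}|^2$ term — precisely what the discrete Gronwall lemma tolerates — while every leftover is a product of a bounded discrete quantity with an already-controlled error that feeds cleanly into the data side. A secondary technicality is projecting the terminal pressure $r$ (and, where needed, $q$) so that assumption (A2) produces the sharp $h\|r\|_{H^1(\Omega)}$ term; this is done exactly as $R_h$ was used for $p$ in the discrete state-equation proof, and it is also where the residual $+h$ in \eqref{ADE3} enters, through the terminal data estimate $\|y_T-y_T^h\|_{\mathbb{L}^2(\Omega_h)}\le Ch$ and the domain discrepancy $|\Omega\backslash\Omega_h|\le Ch^2$.
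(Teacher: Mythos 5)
Your argument is correct, but it follows a genuinely different decomposition from the paper's. The paper does not introduce an auxiliary discrete adjoint problem: it writes a single error equation for $\epsilon=\lambda-\lambda_\sigma$ by subtracting \eqref{ADE1} from the time-integrated continuous adjoint system, splits $\epsilon=\psi+\epsilon_\sigma$ with $\psi=\lambda-P'_\sigma\lambda$ the elliptic projection error, uses the $a_\alpha$-orthogonality $(\psi(t_n),w_h)+\alpha^2a(\psi(t_n),w_h)=0$ to eliminate the time-difference contributions of $\psi$, and then runs one backward telescoping energy estimate plus discrete Gronwall, treating the terminal step $\|P_h\lambda(T)-\lambda_{N_\tau+1,h}\|_{\mathbb{H}^1(\Omega)}$ separately via the last equation of \eqref{ADE1}. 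Your two-step route --- an intermediate $\hat\lambda_\sigma$ driven by the exact state, followed by a perturbation estimate for $\hat\lambda_\sigma-\lambda_\sigma$ --- transplants to the adjoint the structure the paper uses for the state equation (Lemma \ref{LM53} plus the theorem after it); it cleanly separates the pure space-time discretization error of the linearized adjoint operator from the error caused by replacing $y$ by $y_\sigma$, at the cost of setting up and analyzing one extra scheme. Two small caveats: unlike the state case, your auxiliary adjoint problem necessarily keeps the convective terms (they act on the unknown $\hat\lambda_{n,h}$, not on data), so your first step is not literally ``exactly as Lemma \ref{LM53}'' --- its error equation still contains terms of the type $c(\epsilon_{n,h},y(t),\epsilon(t))$ that must be absorbed via the sharp trilinear inequality and $y\in L^\infty(0,T;V)$, exactly as you do in step two; and for the backward scheme the relevant projection is $P'_\sigma$ (evaluation at $t_{n-1}$, as in \eqref{ADE4}), not $P_\sigma$. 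Neither point affects the outcome: the ingredients you invoke (\eqref{DS11}--\eqref{DS12}, assumption (A2) through $R_h$, the bound $\|y_T-y_T^h\|_{\mathbb{L}^2(\Omega_h)}\le Ch$, the identity $c(y_{n,h},e_{n,h},e_{n,h})=0$ versus the absorbable $c(e_{n,h},y_{n,h},e_{n,h})$, and the backward discrete Gronwall inequality) are the same ones the paper uses, and they assemble into \eqref{ADE3} as you describe.
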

\begin{proof}
Define the operator $P'_\sigma: C([0,T];V)\to V_\sigma^r$ by 
\begin{equation}
\label{ADE4}
(P'_\sigma w)_{n,h}=P_hw(t_{n-1}),\;n=1,2, \ldots,N_\tau+1,
\end{equation}
where $P_h$ is given in Definition \ref{Def1}. Analogously to Lemma \ref{LM2} we have 
\begin{equation}
\label{ADE4.1}
\|w-P'_\sigma w\|_{L^2(0,T;\mathbb{H}^1(\Omega))}\le C(h\|w\|_{L^2(0,T;\mathbb{H}^2(\Omega))}+\tau\|w'\|_{L^2(0,T;\mathbb{H}^1(\Omega))}),
\end{equation}
for every $w\in W^{1,2}(0,T;D(A))$. 
Set 
$$\epsilon=\lambda-\lambda_\sigma=(\lambda-P'_\sigma\lambda)+(P'_\sigma\lambda-\lambda_\sigma)= \psi +\epsilon_\sigma.$$
From \eqref{ADE4} we have
$$\psi(t_n)=\lambda(t_n)- (P'_\sigma\lambda)(t_n)=\lambda(t_n)-P_h\lambda(t_n),\quad n=0,1,\ldots,N_\tau.$$
Also we have $\epsilon_\sigma(t_n)=\epsilon_{n+1,h}$. Since $\epsilon(t_n)=\lambda(t_n)-\lambda_{n+1,h}$, it follows from \eqref{DGE3} and \eqref{ADE1} that
\begin{multline*}
(\epsilon(t_{n-1})-\epsilon(t_n),w_h)+\nu\int_{t_{n-1}}^{t_n}a(\epsilon(t),w_h)dt+\alpha^2a(\epsilon(t_{n-1})-\epsilon(t_n),w_h)\\
 +\int_{t_{n-1}}^{t_n}c(w_h,y(t),\lambda(t))dt +\int_{t_{n-1}}^{t_n}c(y(t),w_h,\lambda(t))dt\\
 -\int_{t_{n-1}}^{t_n}c(w_h,y_{n,h},\lambda_{n,h})dt-\int_{t_{n-1}}^{t_n}c(y_{n,h},w_h,\lambda_{n,h})dt\\
 +\int_{t_{n-1}}^{t_n}(q(t)-R_hq(t),{\rm div}\, w_h)dt=\alpha_Q\int_{t_{n-1}}^{t_n}(y(t)-y_{n,h},w_h)dt,\quad\forall w_h\in V_h.
\end{multline*}
Here, we have used the fact that $(R_hq(t),{\rm div}\, w_h)=0\;\forall w_h\in V_h$. Now, replacing $\epsilon$ by $\psi+\epsilon_\sigma$, $w_h$ by $\epsilon_{n,h}$ and taking into account that
\begin{multline*}
(\psi(t_n),w_h)+\alpha^2a(\psi(t_n),w_h)=(\lambda(t_n)-P_h\lambda(t_n),w_h)+\alpha^2a(\lambda(t_n)-P_h\lambda(t_n),w_h)\\
=(\lambda(t_n),w_h)+\alpha^2a(\lambda(t_n),w_h)-[(P_h\lambda(t_n),w_h)+\alpha^2a(P_h\lambda(t_n),w_h)]=0,\;\;\forall w_h\in V_h,
\end{multline*}
we obtain 
\begin{multline*}
(\epsilon_{n,h}-\epsilon_{n+1,h},\epsilon_{n,h}) + \alpha^2a(\epsilon_{n,h}-\epsilon_{n+1,h},\epsilon_{n,h})+\nu\int_{t_{n-1}}^{t_n}a(\epsilon_{n,h},\epsilon_{n,h})dt\\
=\alpha_Q\int_{t_{n-1}}^{t_n}(y(t)-y_{n,h},\epsilon_{n,h})dt-\nu\int_{t_{n-1}}^{t_n}a(\psi(t),\epsilon_{n,h})dt-\int_{t_{n-1}}^{t_n}c(\epsilon_{n,h},y(t),\lambda(t))dt\\
 -\int_{t_{n-1}}^{t_n}c(y(t),\epsilon_{n,h},\lambda(t))dt
 +\int_{t_{n-1}}^{t_n}c(\epsilon_{n,h},y_{n,h},\lambda_{n,h})dt+\int_{t_{n-1}}^{t_n}c(y_{n,h},\epsilon_{n,h},\lambda_{n,h})dt\\
 -\int_{t_{n-1}}^{t_n}(q(t)-R_hq(t),{\rm div}\, \epsilon_{n,h})dt.
\end{multline*}
Hence, 
\begin{align}
&\dfrac{1}{2}|\epsilon_{n,h}|^2-\dfrac{1}{2}|\epsilon_{n+1,h}|^2
+\dfrac{1}{2}|\epsilon_{n,h}-\epsilon_{n+1,h}|^2+\nu \int_{t_{n-1}}^{t_n}|\nabla \epsilon_{n,h}|^2dt+\dfrac{\alpha^2}{2}|\nabla\epsilon_{n,h}|^2\notag\\
&-\dfrac{\alpha^2}{2}|\nabla\epsilon_{n+1,h}|^2+\dfrac{\alpha^2}{2}|\nabla(\epsilon_{n,h}-\epsilon_{n+1,h})|^2\le \alpha_Q\int_{t_{n-1}}^{t_n}|y(t)-y_\sigma(t)||\epsilon_{n,h}|dt\notag\\
&+\nu\int_{t_{n-1}}^{t_n}|\nabla \psi(t)||\nabla\epsilon_{n,h}|dt
+\int_{t_{n-1}}^{t_n}|q(t)-R_hq(t)||{\rm div}\,\epsilon_{n,h}|dt\notag\\
&+\int_{t_{n-1}}^{t_n}|c(y_{n,h},\epsilon_{n,h},\lambda_{n,h})-c(y(t),\epsilon_{n,h},\lambda(t))|dt\notag\\
&+\int_{t_{n-1}}^{t_n}|c(\epsilon_{n,h},y_{n,h},\lambda_{n,h})-c(\epsilon_{n,h},y(t),\lambda(t))|dt.\label{ADE5}
\end{align}
The right-hand side of \eqref{ADE5} can be estimated as follows
\begin{multline*}
\alpha_Q\int_{t_{n-1}}^{t_n}|y(t)-y_\sigma(t)||\epsilon_{n,h}|dt
+\nu\int_{t_{n-1}}^{t_n}|\nabla \psi(t)||\nabla\epsilon_{n,h}|dt
\le \dfrac{\tau_n}{2}|\epsilon_{n,h}|^2\\
+C\int_{t_{n-1}}^{t_n}|y(t)-y_\sigma(t)|^2dt+\dfrac{\nu}{8}\int_{t_{n-1}}^{t_n}|\nabla \epsilon_{n,h}|^2dt+C\int_{t_{n-1}}^{t_n}|\nabla\psi(t)|^2dt,
\end{multline*}
\begin{multline*}
\int_{t_{n-1}}^{t_n}|q(t)-R_hq(t)||{\rm div}\,\epsilon_{n,h}|dt\le \dfrac{\nu}{8}\int_{t_{n-1}}^{t_n}|\nabla\epsilon_{n,h}|^2dt+C\int_{t_{n-1}}^{t_n}|q(t)-R_hq(t)|^2dt,
\end{multline*}
\begin{align}
&\;\;\;\left|\int_{t_{n-1}}^{t_n}|c(y_{n,h},\epsilon_{n,h},\lambda_{n,h})-c(y(t),\epsilon_{n,h},\lambda(t))|dt\right|\notag\\
&=\left|\int_{t_{n-1}}^{t_n}[c(y_\sigma(t)-y(t),\epsilon_{n,h},\lambda(t))-c(y_\sigma(t),\epsilon_{n,h},\epsilon(t))]dt\right|\notag\\
&=\left|\int_{t_{n-1}}^{t_n}[c(y_\sigma(t)-y(t),\epsilon_{n,h},\lambda(t))-c(y_\sigma(t),\epsilon_{n,h},\psi(t))]dt\right|\notag\\
&\le C\|\lambda\|_{L^\infty(0,T;\mathbb{H}^1(\Omega))}\int_{t_{n-1}}^{t_n}\|y(t)-y_\sigma(t)\|_{\mathbb{H}^1(\Omega)}|\nabla \epsilon_{n,h}|dt\notag\\
&\;\;\;+C\|y_\sigma\|_{L^\infty(0,T;\mathbb{H}^1(\Omega))}\int_{t_{n-1}}^{t_n}\|\psi(t)\|_{\mathbb{H}^1(\Omega)}|\nabla \epsilon_{n,h}|dt\notag\\
&\le C\int_{t_{n-1}}^{t_n}\big[\|y(t)-y_\sigma(t)\|^2_{\mathbb{H}^1(\Omega)}+\|\psi(t)\|_{\mathbb{H}^1(\Omega)}^2\big]dt+\dfrac{\nu}{4}\int_{t_{n-1}}^{t_n}|\nabla \epsilon_{n,h}|^2dt.\label{ADE5.1}
\end{align}
For the last term in the right-hand side of \eqref{ADE5}, we first see that 
\begin{equation}
\begin{aligned}
\label{ADE6}
 &c(\epsilon_{n,h},y_{n,h},\lambda_{n,h})-c(\epsilon_{n,h},y(t),\lambda(t))\\
= &-[c(\epsilon_{n,h},y(t)-y_\sigma(t),\lambda(t))+c(\epsilon_{n,h},y_\sigma(t),\psi(t))+c(\epsilon_{n,h},y_\sigma(t),\epsilon_{n,h})].
\end{aligned}
\end{equation}
The first two terms in \eqref{ADE6} can be treated analogously as in \eqref{ADE5.1}. For the last we have
\begin{align}
\left|\int_{t_{n-1}}^{t_n}c(\epsilon_{n,h},y_\sigma(t),\epsilon_{n,h})dt\right|&\le C\int_{t_{n-1}}^{t_n}|\epsilon_{n,h}|^{1/4}|\nabla\epsilon_{n,h}|^{7/4}\|y_\sigma(t)\|_{\mathbb{H}^1(\Omega)}dt\notag\\
&\le C\|y_\sigma\|_{L^\infty(0,T;\mathbb{H}^1(\Omega))}\int_{t_{n-1}}^{t_n}|\epsilon_{n,h}|^{1/4}|\nabla\epsilon_{n,h}|^{7/4}dt\notag\\
&\le \dfrac{\nu}{8}\int_{t_{n-1}}^{t_n}|\nabla\epsilon_{n,h}|^2dt+C\tau_n|\epsilon_{n,h}|^2.\label{ADE7}
\end{align}
From \eqref{ADE5}-\eqref{ADE7} we get that
\begin{multline}
\label{ADE8}
(1-C\tau_n)|\epsilon_{n,h}|^2+|\epsilon_{n,h}-\epsilon_{n+1,h}|^2+\alpha^2|\nabla\epsilon_{n,h}|^2+\alpha^2|\nabla(\epsilon_{n,h}-\epsilon_{n+1,h})|^2\\
+\dfrac{\nu}{4}\int_{t_{n-1}}^{t_n}|\nabla\epsilon_{n,h}|^2dt\le |\epsilon_{n+1,h}|^2+\alpha^2|\nabla\epsilon_{n+1,h}|^2\\
+C\left\{\int_{t_{n-1}}^{t_n}\|y(t)-y_\sigma(t)\|^2_{\mathbb{H}^1(\Omega)}dt+\int_{t_{n-1}}^{t_n}\|\psi(t)\|^2_{\mathbb{H}^1(\Omega)}dt+\int_{t_{n-1}}^{t_n}|q(t)-R_hq(t)|^2dt\right\}.
\end{multline}
Set $\tilde{\epsilon}_{k,h}:=\epsilon_{N_\tau+1-k,h},\;\tilde{\tau}_k:=\tau_{N_\tau+1-k}$, $k=0,1,\ldots,N_\tau$, then \eqref{ADE8} gives
\begin{align*}
&(1-C\tilde{\tau}_k)|\tilde{\epsilon}_{k,h}|^2+|\tilde{\epsilon}_{k,h}-\tilde{\epsilon}_{k-1,h}|^2+\alpha^2|\nabla\tilde{\epsilon}_{k,h}|^2+\alpha^2|\nabla(\tilde{\epsilon}_{k,h}-\tilde{\epsilon}_{k-1,h})|^2\\
&+\dfrac{\nu}{4}\int_{t_{N_\tau-k}}^{t_{N_\tau+1-k}}|\nabla\tilde{\epsilon}_{k,h}|^2dt\le |\tilde{\epsilon}_{k-1,h}|^2+\alpha^2|\nabla\tilde{\epsilon}_{k-1,h}|^2\\
&+C\left\{\int_{t_{N_\tau-k}}^{t_{N_\tau+1-k}}\|y(t)-y_\sigma(t)\|^2_{\mathbb{H}^1(\Omega)}dt+\int_{t_{N_\tau-k}}^{t_{N_\tau+1-k}}\|\psi(t)\|^2_{\mathbb{H}^1(\Omega)}dt\right.\\
&\left.
+\int_{t_{N_\tau-k}}^{t_{N_\tau+1-k}}|q(t)-R_hq(t)|^2dt\right\}.
\end{align*}
Adding this inequality from $k=1$ to $k=n$ we have
\begin{align*}
&(1-C\tau)|\tilde{\epsilon}_{n,h}|^2+\alpha^2|\nabla \tilde{\epsilon}_{n,h}|^2+\dfrac{\nu}{4}\int_{t_{N_\tau-n}}^T|\nabla \epsilon_\sigma(t)|^2dt\\
&\le\sum_{k=1}^{n-1}C\tilde{\tau}_k|\tilde{\epsilon}_{k,h}|^2+ |\tilde{\epsilon}_{0,h}|^2+\alpha^2|\tilde{\epsilon}_{0,h}|^2+C\left\{\int_{t_{N_\tau-n}}^{T}\|y(t)-y_\sigma(t)\|^2_{\mathbb{H}^1(\Omega)}dt\right.\\&+\int_{t_{N_\tau-n}}^{T}\|\psi(t)\|^2_{\mathbb{H}^1(\Omega)}dt
\left.
+\int_{t_{N_\tau-n}}^{T}|q(t)-R_hq(t)|^2dt\right\}.
\end{align*}
Using the discrete Gronwall inequality we have, for every $n=0,1,\ldots,N_\tau$,
\begin{multline*}
\|\tilde{\epsilon}_{n,h}\|^2_{\mathbb{H}^1(\Omega)}\le \|\tilde{\epsilon}_{0,h}\|^2_{\mathbb{H}^1(\Omega)}
+C\left\{\int_0^{T}\|y(t)-y_\sigma(t)\|^2_{\mathbb{H}^1(\Omega)}dt\right.\\+\int_0^{T}\|\psi(t)\|^2_{\mathbb{H}^1(\Omega)}dt
\left.
+\int_0^{T}|q(t)-R_hq(t)|^2dt\right\}.
\end{multline*}
Hence,
\begin{multline}
\label{ADE8.1}
\|\epsilon_{n,h}\|^2_{\mathbb{H}^1(\Omega)}\le \|P_h\lambda(T)-\lambda_{N_\tau+1,h}\|^2_{\mathbb{H}^1(\Omega)}
+C\left\{\int_0^{T}\|y(t)-y_\sigma(t)\|^2_{\mathbb{H}^1(\Omega)}dt\right.\\+\int_0^{T}\|\psi(t)\|^2_{\mathbb{H}^1(\Omega)}dt
\left.
+\int_0^{T}|q(t)-R_hq(t)|^2dt\right\},\quad\forall n=1,2,\ldots,N_\tau+1.
\end{multline}
Now, we are going to estimate $\|P_h\lambda(T)-\lambda_{N_\tau+1,h}\|_{\mathbb{H}^1(\Omega)}$. Set 
$$\varepsilon=\lambda(T)-\lambda_{N_\tau+1,h}=(\lambda(T)-P_h\lambda(T))+(P_h\lambda(T)-\lambda_{N_\tau+1,h})=\varepsilon_1+\varepsilon_2.$$
Replacing $\lambda_{N_\tau+1,h}$ by $\lambda(T)-\varepsilon$ in the last equation in \eqref{ADE1} we have
\begin{multline*}
(\varepsilon,w_h)+\alpha^2a(\varepsilon,w_h)=(\lambda(T),w_h)+\alpha^2a(\lambda(T),w_h)-\alpha_T(y_{N_\tau,h}-y_T^h,w_h).
\end{multline*}
Replacing $\varepsilon$ by $\varepsilon_1+\varepsilon_2$, $w_h$ by $\varepsilon_2$ and notice that $(\varepsilon_1,w_h)+\alpha^2a(\varepsilon_1,w_h)=0\;\forall w_h\in V_h$, we get
\begin{align*}
|\varepsilon_2|^2+\alpha^2|\nabla\varepsilon_2|^2
&=(\lambda(T),\varepsilon_2)+\alpha^2a(\lambda(T),\varepsilon_2)-\alpha_T(y_{N_\tau,h}-y_T^h,\varepsilon_2)\\
&=\alpha_T(y(T)-y_T,\varepsilon_2)-(r,{\rm div}\,\varepsilon_2)-\alpha_T(y_{N_\tau,h}-y_T^h,\varepsilon_2)\\
&=\alpha_T(y(T)-y_{N_\tau,h},\varepsilon_2)-\alpha_T(y_T-y_T^h,\varepsilon_2)-(r,{\rm div}\,\varepsilon_2)\\
&=\alpha_T(y(T)-y_{N_\tau,h},\varepsilon_2)-\alpha_T(y_T-y_T^h,\varepsilon_2)-(r-R_hr,{\rm div}\,\varepsilon_2).
\end{align*}
This implies that
$$\|\varepsilon_2\|_{\mathbb{H}^1(\Omega)}\le C(|y(T)-y_{N_\tau,h}|+|y_T-y_T^h|+|r-R_hr|).$$
Hence,
\begin{multline*}
\|\varepsilon_2\|_{\mathbb{H}^1(\Omega)}\le C(\tau\|y'\|_{L^2(0,T;\mathbb{H}^1(\Omega))}+h\|y\|_{C([0,T];\mathbb{H}^2(\Omega))}+h\|p\|_{L^2(0,T;H^1(\Omega))}\\
+h\|r\|_{H^1(\Omega)}+h).
\end{multline*}
This combining with \eqref{DS11}, \eqref{ADE4.1}, \eqref{ADE8.1} imply that
\begin{multline*}
\|\epsilon_{n,h\|_{\mathbb{H}^1(\Omega)}}\le C\big\{\tau\|y'\|_{L^2(0,T;\mathbb{H}^1(\Omega))}+h\|y\|_{C([0,T];\mathbb{H}^2(\Omega))}+h\|p\|_{L^2(0,T;H^1(\Omega))}\\
+h\|r\|_{H^1(\Omega)}+h\|\lambda\|_{L^2(0,T;\mathbb{H}^2(\Omega))}+\tau\|\lambda'\|_{L^2(0,T;\mathbb{H}^1(\Omega))} +h\|q\|_{L^2(0,T;H^1(\Omega))}+h\big\}\\
\forall n =1,2,\ldots,N_\tau+1.
\end{multline*}
We have
$$\|\psi(t_n)\|_{\mathbb{H}^1(\Omega)}=\|\lambda(t_n)-P_h\lambda(t_n)\|_{\mathbb{H}^1(\Omega)}\le Ch\|\lambda(t_n)\|_{\mathbb{H}^2(\Omega)}\le Ch\|\lambda\|_{C[0,T];\mathbb{H}^2(\Omega)},$$
for every $n=0,1,\ldots,N_\tau.$ Since $\epsilon(t_n)=\psi(t_n)-\epsilon_{n+1,h}$ we get
\begin{multline*}
\|\epsilon(t_n)\|_{\mathbb{H}^1(\Omega)}\le C\big\{\tau\|y'\|_{L^2(0,T;\mathbb{H}^1(\Omega))}+h\|y\|_{C([0,T];\mathbb{H}^2(\Omega))}+h\|p\|_{L^2(0,T;H^1(\Omega))}\\
+h\|r\|_{H^1(\Omega)}+h\|\lambda\|_{C([0,T];\mathbb{H}^2(\Omega))}+\tau\|\lambda'\|_{L^2(0,T;\mathbb{H}^1(\Omega))} +h\|q\|_{L^2(0,T;H^1(\Omega))}+h\big\}\\
\forall n =0,1,\ldots,N_\tau.
\end{multline*}
Now, assume that $t\in (t_{n-1},t_n)$, then 
$$\epsilon(t)=\lambda(t)-\lambda_{n,h}=\lambda(t)-\lambda(t_{n-1})+(\lambda(t_{n-1})-\lambda_{n,h})=\lambda(t)-\lambda(t_{n-1})+\epsilon(t_{n-1}).$$
Analogously as in the last paragraph in the proof of Lemma \ref{LM53} we have
$$\|\lambda(t)-\lambda(t_{n-1})\|_{\mathbb{H}^1(\Omega)}\le \sqrt{\tau}\|\lambda'\|_{L^2(0,T;\mathbb{H}^1(\Omega))},$$
then we get \eqref{ADE3}.
\end{proof}
 \begin{remark}{\rm 
\label{rm2} 
 According to the proof above, the constant $C$ in \eqref{ADE3} depends on $\|\lambda\|_{L^{\infty}(0,T;\mathbb{H}^1(\Omega))}$, $\|y_\sigma\|_{L^\infty(0,T;\mathbb{H}^1(\Omega))}$. However, we see that if $\|u\|_{L^2(0,T;\mathbb{L}^2(\Omega))}\le M$ then this constant depends only on $M$, not on $\lambda, y, u$.}
 \end{remark}
\begin{corollary}
Assume that $\max\{\|u\|_{L^2(0,T;\mathbb{L}^2(\Omega))},\|v\|_{L^2(0,T;\mathbb{L}^2(\Omega))}\}\le M$. Let $\lambda_u\in W^{1,2}(0,T;D(A))$ be the solution of \eqref{DGE4} and $\lambda_\sigma(v)\in V_\sigma$ be the solution of the discrete equation \eqref{ADE1} corresponding to the control $v$. Then there exists a constant $C_M>0$ such that
\begin{equation}
\label{ADE9}
\|\lambda_u-\lambda_\sigma(v)\|_{L^\infty(0,T;\mathbb{H}^1(\Omega))}\le C_M \left\{h+\tau+\sqrt{\tau}+\|u-v\|_{L^2(0,T;\mathbb{L}^2(\Omega))}\right\}.
\end{equation}
\end{corollary}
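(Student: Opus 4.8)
The plan is to argue exactly as in the proof of Corollary \ref{Cor4.1}: I would insert the \emph{continuous} adjoint state $\lambda_v$ corresponding to the control $v$ (the weak solution of \eqref{DGE3}--\eqref{DGE4} with $y_u$ replaced by $y_v$) and split
\begin{equation*}
\|\lambda_u-\lambda_\sigma(v)\|_{L^\infty(0,T;\mathbb{H}^1(\Omega))}\le\|\lambda_u-\lambda_v\|_{L^\infty(0,T;\mathbb{H}^1(\Omega))}+\|\lambda_v-\lambda_\sigma(v)\|_{L^\infty(0,T;\mathbb{H}^1(\Omega))}.
\end{equation*}
The second term is the discretization error for the adjoint equation with control $v$, so it is controlled by \eqref{ADE3}. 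Since $\|v\|_{L^2(0,T;\mathbb{L}^2(\Omega))}\le M$, Theorems \ref{tontaiNSV} and \ref{THR2.2} bound $\|y_v\|_{W^{1,2}(0,T;D(A))}+\|p_v\|_{L^2(0,T;H^1(\Omega))}$ by $C_M$, and the last item of the Remark on the weak solution of \eqref{DGE3} bounds $\|\lambda_v\|_{W^{1,2}(0,T;D(A))}+\|q_v\|_{L^2(0,T;H^1(\Omega))}+\|r_v\|_{H^1(\Omega)}$ by $C_M$; the continuous embedding $W^{1,2}(0,T;D(A))\hookrightarrow C([0,T];\mathbb{H}^2(\Omega))$ then bounds $\|y_v\|_{C([0,T];\mathbb{H}^2(\Omega))}$ and $\|\lambda_v\|_{C([0,T];\mathbb{H}^2(\Omega))}$ by $C_M$ as well, and $\|y_v'\|_{L^2(0,T;\mathbb{H}^1(\Omega))}+\|\lambda_v'\|_{L^2(0,T;\mathbb{H}^1(\Omega))}\le C_M$ since $D(A)\hookrightarrow\mathbb{H}^1(\Omega)$. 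Together with Remark \ref{rm2} (which says the constant in \eqref{ADE3} depends only on $M$), this gives $\|\lambda_v-\lambda_\sigma(v)\|_{L^\infty(0,T;\mathbb{H}^1(\Omega))}\le C_M(h+\tau+\sqrt\tau)$.

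For the first term I would prove the local Lipschitz dependence of the continuous adjoint state on the control. Set $\eta:=y_u-y_v=G(u)-G(v)$ and $\delta:=\lambda_u-\lambda_v$. Since $G$ is of class $C^2$, the mean value theorem together with \eqref{DGE2.1} and \eqref{ST2.1} yields $\|\eta\|_{W^{1,2}(0,T;V)}\le C_M\|u-v\|_{L^2(0,T;\mathbb{L}^2(\Omega))}$, hence also $\|\eta(T)\|_V\le C_M\|u-v\|_{L^2(0,T;\mathbb{L}^2(\Omega))}$ by $W^{1,2}(0,T;V)\hookrightarrow C([0,T];V)$. Subtracting the two copies of \eqref{DGE5} written for $u$ and $v$, and decomposing the difference of the convective coefficients as
\begin{equation*}
c(y_u,w,\lambda_u)+c(w,y_u,\lambda_u)-c(y_v,w,\lambda_v)-c(w,y_v,\lambda_v)=c(\eta,w,\lambda_u)+c(w,\eta,\lambda_u)+c(y_v,w,\delta)+c(w,y_v,\delta),
\end{equation*}
one sees that $\delta$ solves a linear backward problem with source $\alpha_Q\eta-c(\eta,\cdot,\lambda_u)-c(\cdot,\eta,\lambda_u)$ and terminal relation $(\delta(T),w)+\alpha^2a(\delta(T),w)=\alpha_T(\eta(T),w)$. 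Testing the latter with $w=\delta(T)$ gives $|\delta(T)|^2+\alpha^2\|\delta(T)\|^2\le C\|\eta(T)\|_V^2\le C_M\|u-v\|_{L^2(0,T;\mathbb{L}^2(\Omega))}^2$. Testing the equation with $w=\delta(s)$ and integrating backward in $s$, using $c(y_v,\delta,\delta)=0$, the identity $c(\delta,y_v,\delta)=b(\delta,y_v,\delta)$ with the bound $|b(\delta,y_v,\delta)|\le C|\delta|^{1/2}\|\delta\|^{3/2}\|y_v\|$ and the other trilinear estimates of Lemma \ref{trilinear}, the $L^\infty(0,T;\mathbb{H}^1(\Omega))$-bounds on $y_u,y_v$ (from the a priori estimate $\|y\|_{L^\infty(0,T;V)}\le C_M$ in the proof of Theorem \ref{THR2.2}) and on $\lambda_u$ (from the Remark on \eqref{DGE3}), Young's inequality to absorb the $\delta$-contributions into $\nu\|\delta\|^2$, and the backward Gronwall inequality, I expect to obtain $\|\delta\|_{L^\infty(0,T;\mathbb{H}^1(\Omega))}\le C_M\|u-v\|_{L^2(0,T;\mathbb{L}^2(\Omega))}$. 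Adding the two bounds gives \eqref{ADE9}. (Alternatively, one can observe that $u\mapsto\lambda_u$ is of class $C^1$ on bounded subsets of $L^2(0,T;\mathbb{L}^2(\Omega))$, because $y_u$ depends smoothly on $u$ and the adjoint system depends affinely on $y_u$, and then apply the mean value theorem directly.)

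The main obstacle is the backward energy estimate for $\delta$: the convective terms $c(\eta,\cdot,\lambda_u)+c(\cdot,\eta,\lambda_u)$ must be handled using only $\lambda_u\in L^\infty(0,T;\mathbb{H}^1(\Omega))$ and $\eta\in L^2(0,T;V)$ (so that $\int_0^T\|\eta(s)\|^2\,ds\le C_M\|u-v\|^2$ closes the estimate), and the remaining $\delta$-terms must be absorbed into the viscous term without producing a constant that degenerates as $\sigma\to 0$, which is exactly where the $M$-uniform a priori bounds invoked in Remarks \ref{rm1} and \ref{rm2} are used. The terminal term $\alpha_T(\eta(T),\delta(T))$ is taken care of through the $\alpha^2\|\nabla\delta(T)\|^2$ contribution supplied by the terminal condition, in the same way $\varepsilon_2$ was estimated in the proof of the preceding theorem.
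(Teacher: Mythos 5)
Your proposal is correct and follows essentially the same route as the paper: split off the discretization error $\|\lambda_v-\lambda_\sigma(v)\|$ via \eqref{ADE3} with the $M$-uniform constants, and establish the Lipschitz estimate $\|\lambda_u-\lambda_v\|_{L^\infty(0,T;\mathbb{H}^1(\Omega))}\le C_M\|u-v\|_{L^2(0,T;\mathbb{L}^2(\Omega))}$ by a backward energy/Gronwall argument on the difference of the two adjoint systems, using the mean value theorem and \eqref{DGE2.1} to control $y_u-y_v$. The only (immaterial) difference is that you expand the trilinear differences around $\lambda_u$ while the paper expands around $\lambda_v$; both yield the same vanishing term $c(y_v,\delta,\delta)=0$, the same quadratic term $c(\delta,y_\cdot,\delta)$ absorbed by Gronwall, and the same source terms controlled by $\|\eta\|_{L^2(0,T;V)}$.
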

\begin{proof}
From \eqref{ADE3} we have
\begin{equation}
\label{ADE9.1}
\|\lambda_u-\lambda_\sigma(v)\|_{L^\infty(0,T;\mathbb{H}^1(\Omega))}\le \|\lambda_u-\lambda_v\|_{L^\infty(0,T;\mathbb{H}^1(\Omega)}+C(h+\tau+\sqrt{\tau}),
\end{equation}
where $C$ depends on $M$. Setting $\lambda=\lambda_u-\lambda_v$, then from \eqref{DGE5} we get
\begin{multline*}
-(\lambda_t,w)+\nu a(\lambda,w) -\alpha^2a(\lambda_t,w)=c(y_v,w,\lambda_v)+c(w,y_v,\lambda_v)\\ - c(y_u,w,\lambda_u)-c(w,y_u,\lambda_u)
+\alpha_Q(y_u-y_v,w).
\end{multline*}
Taking $w=\lambda$ and using the following identities
$$c(y_v,\lambda,\lambda_v)-c(y_u,w,\lambda_u)=c(y_v-y_u,\lambda,\lambda_v),$$
$$c(\lambda,y_v,\lambda_v)-c(\lambda,y_u,\lambda_u)=c(\lambda,y_v-y_u,\lambda_v)-c(\lambda,y_u,\lambda),$$
we obtain
\begin{multline*}
-(\lambda_t,\lambda)+\nu a(\lambda,\lambda) -\alpha^2a(\lambda_t,\lambda)=c(y_v-y_u,\lambda,\lambda_v)+c(\lambda,y_v-y_u,\lambda_v)\\
-c(\lambda,y_u,\lambda)
+\alpha_Q(y_u-y_v,\lambda).
\end{multline*}
Integrating from $t$ to $T$ then integrating by parts yields
\begin{multline}
\label{ADE10}
\dfrac{1}{2}|\lambda(t)|^2-\dfrac{1}{2}|\lambda(T)|^2+\nu\int_t^T|\nabla \lambda(s)|^2ds+\dfrac{\alpha^2}{2}|\nabla \lambda(t)|^2-\dfrac{\alpha^2}{2}|\nabla \lambda(T)|^2\\
\le C\left\{\int_t^T|\nabla y_v(s)-\nabla y_u(s)||\nabla\lambda(s)||\nabla \lambda_v(s)|ds\right.\\
+\int_t^T |\nabla\lambda(s)||\nabla y_v(s)-\nabla y_u(s)||\nabla\lambda_v(s)|ds
\left.+\int_t^T |\nabla\lambda(s)|^2|\nabla y_u(s)|ds\right\}\\
+\alpha_Q\int_t^T |y_u(s)-y_v(s)||\lambda(s)|ds.
\end{multline}
Using again \eqref{DGE5} we have
$$|\lambda(T)|^2+\alpha^2|\nabla\lambda(T)|^2=\alpha_T(y_u(T)-y_v(T),\lambda(T)).$$
Therefore,
$$|\lambda(T)|^2+\alpha^2|\nabla\lambda(T)|^2\le C|y_u(T)-y_v(T)|^2,$$
where $C$ is a constant depending only on $\alpha_T$. Hence, we get from \eqref{ADE10} that
\begin{multline*}
\dfrac{1}{2}|\lambda(t)|^2+\dfrac{\alpha^2}{2}|\nabla \lambda(t)|^2\le C_M\left\{\int_0^T|\nabla y_v(s)-\nabla y_u(s)|^2ds +|y_u(T)-y_v(T)|^2\right.\\
\left.+ \int_t^T|\nabla \lambda(s)|^2ds\right\}
\end{multline*}
since $\|\lambda_v\|_{W^{1,2}(0,T;V)},\,\|y_u\|_{W^{1,2}(0,T;V)}\le C_M$. In addition, we have
$$\int_0^T\|y_v(s)-y_u(s)\|^2ds +|y_u(T)-y_v(T)|^2\le C\|y_u-y_v\|^2_{W^{1,2}(0,T;V)},$$
\begin{align*}
\|y_u-y_v\|^2_{W^{1,2}(0,T;V)}&=\|G(u)-G(v)\|^2_{W^{1,2}(0,T;V)}\\
&\le \sup_{0\le \rho\le 1}\|G'(u+\rho(v-u))\|_{W^{1,2}(0,T;V)}\|u-v\|_{L^2(0,T;\mathbb{L}^2(\Omega))}\\
&\le C_M\|u-v\|_{L^2(0,T;\mathbb{L}^2(\Omega))}.
\end{align*}
Therefore,
$$\|\lambda(t)\|^2_{\mathbb{H}^1(\Omega)}\le C_M\left\{\|u-v\|_{L^2(0,T;\mathbb{L}^2(\Omega))}^2+\int_t^T\|\lambda(s)\|^2_{\mathbb{H}^1(\Omega)}ds\right\},\;\;\forall t\in [0,T].$$
This implies that
$$\|\lambda(t)\|_{\mathbb{H}^1(\Omega)}\le C\|u-v\|_{L^2(0,T;\mathbb{L}^2(\Omega))}\;\;\forall t\in[0,T],$$
by using the Gronwall inequality. Combining this with \eqref{ADE9.1} we get \eqref{ADE9}.
\end{proof}
\subsection{Convergence of the discrete control problem and error estimates}
Since $J_\sigma$ is a continuous and coercive function on a non-empty convex closed subset of a finite-dimensional space, it is easy to see that problem $(P_\sigma)$ has at least one solution. By the similar arguments as in \cite[Sections 4.3 and 4.4]{Casas2012}, we get the following theorems. 

The first theorem shows the convergence of these discrete solutions to a solution of problem $(P)$. The proof of this theorem is exactly that of Theorem 4.13 in \cite{Casas2012}.
\begin{theorem}
\label{THR4.4}
Denote by $\bar{u}_\sigma$ a global solution of problem $(P_\sigma)$. Then the sequence $\{\bar{u}_\sigma\}_\sigma$ is bounded in $L^2(0,T;\mathbb{L}^2(\Omega))$ and there exist subsequences, denoted in the same way, weakly convergent in the space $L^2(0,T;\mathbb{L}^2(\Omega))$. If $\bar{u}\in L^2(0,T;\mathbb{L}^2(\Omega))$ is one of the limit points, i.e. $u_\sigma\rightharpoonup \bar{u}$, then $\bar{u}$ is a solution of problem $(P)$. Moreover, we have
\begin{equation}
\label{CD1}
\lim_{\sigma\to 0}\|\bar{u}-\bar{u}_\sigma\|_{L^2(0,T;\mathbb{L}^2(\Omega_h))}=0\quad \text{ and } \quad \lim_{\sigma\to 0}J_\sigma(\bar{u}_\sigma)=J(\bar{u}).
\end{equation}
\end{theorem}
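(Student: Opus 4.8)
The plan is to run the classical $\Gamma$-convergence-type argument for discretized optimal control problems, feeding in the a priori convergence results of Corollary \ref{Cor4.1} and the compactness built into the regularity of the state (strong solutions), exactly in the spirit of \cite{Casas2012}. I would organize it in three blocks: boundedness and extraction of a limit; optimality of the limit; and strong convergence of the controls together with convergence of the optimal values.

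\emph{Boundedness and limit points.} Every global solution $\bar u_\sigma$ lies in $U_{\sigma,ad}\subset U_{\alpha,\beta}$, so the box constraints give $\alpha_i\le (\bar u_\sigma)_i\le\beta_i$ a.e., whence $\{\bar u_\sigma\}_\sigma$ is bounded in $\mathbb{L}^\infty(Q)$ and a fortiori in $L^2(0,T;\mathbb{L}^2(\Omega))$. By reflexivity there is a subsequence with $\bar u_\sigma\rightharpoonup\bar u$ in $L^2(0,T;\mathbb{L}^2(\Omega))$; since $U_{\alpha,\beta}$ is convex and closed it is weakly sequentially closed, so $\bar u\in U_{\alpha,\beta}$.

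\emph{Optimality of $\bar u$.} Fix an arbitrary $u\in U_{\alpha,\beta}$ and construct a recovery sequence $u_\sigma\in U_{\sigma,ad}$ as the $L^2$-projection of $u$ onto $U_\sigma$, i.e. the cell-and-time average $u_{n,T}=\frac{1}{\tau_n|T|}\int_{t_{n-1}}^{t_n}\!\int_T u\,dx\,dt$; averaging preserves the pointwise bounds, so $u_\sigma\in U_{\sigma,ad}$, and by the usual step-function density $u_\sigma\to u$ strongly in $L^2(0,T;\mathbb{L}^2(\Omega_h))$, while $\|u\|_{L^2((0,T)\times(\Omega\setminus\Omega_h))}^2\le\|u\|_\infty^2|\Omega\setminus\Omega_h|\le Ch^2\to0$ by \eqref{Ap1}. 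Since $\bar u_\sigma$ minimizes $J_\sigma$ on $U_{\sigma,ad}$ we have $J_\sigma(\bar u_\sigma)\le J_\sigma(u_\sigma)$. On the right-hand side, strong convergence $u_\sigma\to u$ together with \eqref{DS15.2} (hence strong convergence of the discrete states and of their final-time values), $y_T^h\to y_T$ in $\mathbb{L}^2(\Omega_h)$, and \eqref{Ap1}, yields $J_\sigma(u_\sigma)\to J(u)$. On the left-hand side, $\bar u_\sigma\rightharpoonup\bar u$ together with \eqref{DS15.3}–\eqref{DS15.3.0} gives $y_\sigma(\bar u_\sigma)\to y_{\bar u}$ in $L^\infty(0,T;\mathbb{H}^1(\Omega))$ and $y_\sigma(\bar u_\sigma)(T)\to y_{\bar u}(T)$ in $\mathbb{H}^1(\Omega)$; the tracking terms then converge strongly, while weak lower semicontinuity of $u\mapsto\|u\|_{L^2}^2$ handles the control-cost term, so $\liminf_\sigma J_\sigma(\bar u_\sigma)\ge J(\bar u)$. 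Chaining, $J(\bar u)\le\liminf_\sigma J_\sigma(\bar u_\sigma)\le\limsup_\sigma J_\sigma(\bar u_\sigma)\le\lim_\sigma J_\sigma(u_\sigma)=J(u)$, and since $u\in U_{\alpha,\beta}$ is arbitrary, $\bar u$ is a (global) solution of $(P)$.

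\emph{Strong convergence and convergence of values.} Taking $u=\bar u$ in the last chain forces $\lim_\sigma J_\sigma(\bar u_\sigma)=J(\bar u)$, which is the second identity in \eqref{CD1}. Since the three tracking terms of $J_\sigma(\bar u_\sigma)$ converge to those of $J(\bar u)$ (strong state convergence plus the $O(h^2)$ domain error), the control-cost term must also converge: $\frac{\gamma}{2}\|\bar u_\sigma\|_{L^2(0,T;\mathbb{L}^2(\Omega_h))}^2\to\frac{\gamma}{2}\|\bar u\|_{L^2(0,T;\mathbb{L}^2(\Omega))}^2$. Combined with $\bar u_\sigma\rightharpoonup\bar u$ and $\gamma>0$, norm convergence in the Hilbert space $L^2(0,T;\mathbb{L}^2(\Omega))$ (after accounting for $\Omega\setminus\Omega_h$, of measure $O(h^2)$) upgrades weak to strong convergence, giving the first identity in \eqref{CD1}.

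\emph{Main obstacle.} The genuinely delicate point is passing to the limit in the state-tracking terms when only $\bar u_\sigma\rightharpoonup\bar u$: one needs the \emph{strong} convergence $y_\sigma(\bar u_\sigma)\to y_{\bar u}$ in $L^\infty(0,T;\mathbb{H}^1(\Omega))$ and at $t=T$, which is obtained by splitting $y_\sigma(\bar u_\sigma)-y_{\bar u}=(y_\sigma(\bar u_\sigma)-y_{\bar u_\sigma})+(y_{\bar u_\sigma}-y_{\bar u})$, controlling the first difference by the discretization estimate \eqref{DS15.3.1} and the second by the compact embedding $W^{1,2}(0,T;D(A))\hookrightarrow L^\infty(0,T;\mathbb{H}^1(\Omega))$ (this is exactly where the strong-solution regularity of the Navier–Stokes–Voigt state is used). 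A secondary but persistent nuisance is the bookkeeping between $\Omega$ and $\Omega_h$ in $J$, $J_\sigma$, $y_T^h$, $y_Q$, which must all be shown to contribute $o(1)$ via \eqref{Ap1}; these are routine but need to be kept track of throughout.
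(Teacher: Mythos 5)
Your proposal is correct and follows essentially the same route as the paper, which simply defers to the proof of Theorem 4.13 in \cite{Casas2012}: box-constraint boundedness and weak extraction, a piecewise-average recovery sequence preserving the constraints, the sandwich argument using the strong state convergence from Corollary \ref{Cor4.1} (via \eqref{DS15.2}, \eqref{DS15.3}, \eqref{DS15.3.0}) and weak lower semicontinuity of the control cost, and finally norm convergence plus weak convergence (using $\gamma>0$) to upgrade to strong convergence of the controls. The $\Omega$ versus $\Omega_h$ bookkeeping you flag is exactly the point the paper addresses in the remark following the theorem, by extending $\bar{u}_\sigma$ to $\Omega\setminus\Omega_h$ and invoking \eqref{Ap1}.
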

 
In general, it is not correct to claim that the sequence $\{u_\sigma\}_\sigma$ is bounded in $L^2(0,T;\mathbb{L}^2(\Omega))$, because $\bar{u}_\sigma$ is only defined in $(0,T)\times \Omega_h$. We will prove that $\{u_\sigma\}_\sigma$ is bounded in $L^2(0,T;\mathbb{L}^2(\Omega_h))$ by a constant independent of $\sigma$. Then, we take an arbitrary element $v$ in the space $L^2(0,T;\mathbb{L}^2(\Omega))$ and extend every $\bar{u}_\sigma$ to $(0,T)\times \Omega$ by setting $\bar{u}_\sigma(t,x)=v(t,x)$ for every $(t,x)\in (0,T)\times (\Omega\backslash \Omega_h)$. By \eqref{Ap1}, the sequence $\{\bar{u}_\sigma\}_\sigma$ is bounded in $L^2(0,T;\mathbb{L}^2(\Omega))$ and every weak limit point of a subsequence is a solution of $(P)$, regardless of the choice of $v$.

The next theorem, whose proof is the same that of Theorem 4.15 in \cite{Casas2012}, is important from a practical point of view because it states that every strict local minimum of problem $(P)$ can be approximated by local minima
of problems $(P_\sigma)$.
\begin{theorem}
\label{THR4.5}
If $\bar{u}$ is a strict local minimum of $(P)$ then there exists a sequence $\{\bar{u}_\sigma\}_\sigma$ of local minima of problems $(P_\sigma)$ such that \eqref{CD1} holds.
\end{theorem}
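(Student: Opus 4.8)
The plan is to adapt the localization argument of Casas--Chrysafinos (the proof of Theorem 4.15 in \cite{Casas2012}). Since $\bar u$ is a \emph{strict} local minimum of $(P)$, fix $\rho>0$ such that $\bar u$ is the unique solution of the localized problem
\[
(P^\rho)\qquad \min\{J(u):\ u\in U_{\alpha,\beta}\cap \bar B_\rho(\bar u)\},
\]
the ball being taken in $L^2(0,T;\mathbb L^2(\Omega))$ (discrete controls being extended to $\Omega$ as in the paragraph preceding the statement). First I would introduce, for each $\sigma$, the localized discrete problem
\[
(P_\sigma^\rho)\qquad \min\{J_\sigma(u_\sigma):\ u_\sigma\in U_{\sigma,ad}\cap \bar B_\rho(\bar u)\}.
\]
Its feasible set is convex and closed; the point to check is that it is nonempty for all small $\sigma$. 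Let $\Pi_\sigma\bar u\in U_\sigma$ be the $L^2$-orthogonal projection of $\bar u$ onto the piecewise-constant space $U_\sigma$, i.e. the cell averages of $\bar u$ over each $(t_{n-1},t_n)\times T$. Cell averaging preserves the box constraints, so $\Pi_\sigma\bar u\in U_{\sigma,ad}$, and since $\bigcup_\sigma U_\sigma$ is dense in $L^2(0,T;\mathbb L^2(\Omega_h))$ we have $\|\bar u-\Pi_\sigma\bar u\|_{L^2(0,T;\mathbb L^2(\Omega_h))}\to 0$; hence $\Pi_\sigma\bar u\in U_{\sigma,ad}\cap \bar B_\rho(\bar u)$ eventually, and $(P_\sigma^\rho)$ admits a global minimizer $\bar u_\sigma$.

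Second, I would run the convergence argument of Theorem \ref{THR4.4}. The sequence $\{\bar u_\sigma\}_\sigma$ is bounded in $L^2(0,T;\mathbb L^2(\Omega))$, so along a subsequence $\bar u_\sigma\rightharpoonup \tilde u$, with $\tilde u\in U_{\alpha,\beta}\cap \bar B_\rho(\bar u)$ (both sets are weakly closed). Using Corollary \ref{Cor4.1} (the state convergence $y_\sigma(u_\sigma)\to y_u$ forced by $u_\sigma\rightharpoonup u$) and weak lower semicontinuity of the quadratic part of $J$, while $J_\sigma(\Pi_\sigma\bar u)\to J(\bar u)$ because $\Pi_\sigma\bar u\to\bar u$ strongly and the truncation $\Omega_h\subset\Omega$ contributes only an $O(h)$ error by \eqref{Ap1}, one gets
\[
J(\tilde u)\le \liminf_{\sigma\to 0}J_\sigma(\bar u_\sigma)\le \limsup_{\sigma\to 0}J_\sigma(\bar u_\sigma)\le \limsup_{\sigma\to 0}J_\sigma(\Pi_\sigma\bar u)=J(\bar u).
\]
By uniqueness of the minimizer of $(P^\rho)$ this forces $\tilde u=\bar u$; the chain becomes a chain of equalities, which gives $J_\sigma(\bar u_\sigma)\to J(\bar u)$ and, combining weak convergence with convergence of the $L^2$-norms (via the strictly convex term $\tfrac{\gamma}{2}\|\cdot\|^2$), the strong convergence $\bar u_\sigma\to\bar u$ in $L^2$. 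A standard subsequence--of--subsequence argument removes the extracted subsequence, so \eqref{CD1} holds for $\{\bar u_\sigma\}_\sigma$.

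Third, I would upgrade $\bar u_\sigma$ from a minimizer of $(P_\sigma^\rho)$ to a genuine local minimizer of $(P_\sigma)$: by the strong convergence just proved, for $\sigma$ small enough $\|\bar u_\sigma-\bar u\|_{L^2(0,T;\mathbb L^2(\Omega))}<\rho$, so $\bar u_\sigma$ lies in the interior of $\bar B_\rho(\bar u)$ and therefore minimizes $J_\sigma$ over $U_{\sigma,ad}$ in a neighborhood of itself. The main obstacle is not conceptual but a matter of careful bookkeeping with the non-nested domains $\Omega_h\subset\Omega$: one must define the control projection $\Pi_\sigma$ so that it lands in $U_{\sigma,ad}$, control $\|\bar u-\Pi_\sigma\bar u\|$ uniformly from $\bar u\in\mathbb L^2(Q)$ alone, and check that replacing $\Omega$ by $\Omega_h$ in $J_\sigma$ (and replacing $y_T$ by $y_T^h$) costs only an $O(h)$ term through \eqref{Ap1}; once these routine estimates are in place the argument follows the template of Theorems 4.13 and 4.15 of \cite{Casas2012} verbatim.
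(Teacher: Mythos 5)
Your proposal is correct and follows exactly the localization strategy that the paper invokes by reference to Theorem 4.15 of \cite{Casas2012}: introduce the ball-constrained problems $(P_\sigma^\rho)$, verify feasibility via the cell-average projection, pass to the weak limit and use strict local minimality plus the strictly convex control cost to upgrade to strong convergence, and conclude that the minimizers are eventually interior to the ball. The bookkeeping issues you flag (extension of discrete controls off $\Omega_h$, the $O(h)$ cost of the domain truncation via \eqref{Ap1}) are precisely the ones the paper handles in the paragraph following Theorem \ref{THR4.4}, so nothing is missing.
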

We now denote by $\bar{u}$ a locally optimal control of the problem $(P)$, and for every $\sigma$, $\bar{u}_\sigma$ denotes a local solution of $(P_\sigma)$ such that $\|\bar{u}-\bar{u}_\sigma\|_{L^2(0,T;\mathbb{L}^2(\Omega_h))}\to 0$ (see Theorems \ref{THR4.4} and \ref{THR4.5}). Each element $u\in U_\sigma$ is extended to $(0,T)\times\Omega$ by setting $u(t,x)=\bar{u}(t,x)$ for $(t,x)\in (0,T)\times (\Omega\backslash \Omega_h)$. We will also denote by $\bar{y}$ and $\bar{\lambda}$ the state and adjoint state associated to $\bar{u}$, and by $\bar{y}_\sigma$ and $\bar{\lambda}_\sigma$ the discrete state and adjoint state associated to $\bar{u}_\sigma$. 

We are ready to give space-time error estimates for the discretization.
\begin{theorem}
Suppose that \eqref{DGE12} holds. Then there exists a constant $C>0$ independent of $\sigma$ such that
\begin{align}
&\|\bar{u}-\bar{u}_\sigma\|_{L^2(0,T;\mathbb{L}^2(\Omega_h))} \le C(h+\tau+\sqrt{\tau}), \label{MEST1}\\
&\|\bar{y}-\bar{y}_\sigma\|_{L^\infty(0,T;\mathbb{H}^1(\Omega))}\le C(h+\tau+\sqrt{\tau}), \label{MEST2}\\
&\|\bar{\lambda}-\bar{\lambda}_\sigma\|_{L^\infty(0,T;\mathbb{H}^1(\Omega))}\le C(h+\tau+\sqrt{\tau}). \label{MEST3}
\end{align}
\end{theorem}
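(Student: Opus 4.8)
\emph{Plan of proof.} The plan is to prove \eqref{MEST1} by the standard duality argument of \cite{Casas2012}, using the second-order sufficient condition \eqref{DGE12}, and then to get \eqref{MEST2}--\eqref{MEST3} directly from \eqref{MEST1} together with the state and adjoint error estimates already proved (Corollary \ref{Cor4.1} and the corollary containing \eqref{ADE9}). Throughout, $\bar u_\sigma$ and its $U_\sigma$-projection are extended to $(0,T)\times\Omega$ by $\bar u$ on $\Omega\setminus\Omega_h$, so that their differences with $\bar u$ are supported in $Q_h$; and I would use freely the regularity of the optimal adjoint, $\bar\lambda\in W^{1,2}(0,T;D(A))$, which via the projection formula $\bar u=\mathrm{Proj}_{[\alpha,\beta]}(-\bar\lambda/\gamma)$ and the Lipschitz continuity of the pointwise projection gives $\bar u\in L^2(0,T;\mathbb{H}^1(\Omega))\cap W^{1,2}(0,T;\mathbb{L}^2(\Omega))$.

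First I would record a coercivity estimate for $J''$ along the competitor direction $v_\sigma:=\bar u_\sigma-\bar u$. Since $\bar u$ satisfies \eqref{DGE12} and, by Theorems \ref{THR4.4}--\ref{THR4.5}, $\bar u_\sigma\to\bar u$ in $L^2(0,T;\mathbb{L}^2(\Omega_h))$ (cf. \eqref{CD1}), the usual device of splitting $v_\sigma$ into a part lying in an $L^1$-type enlargement of the critical cone $\mathcal C_{\bar u}$ and a remainder small in $L^1(Q)$, combined with the continuity of $J''$ and the uniform $L^\infty(Q)$-bound on admissible controls, should produce constants $\delta>0$ and $\sigma_0>0$ with
\[
J''(u_\theta)\,v_\sigma^2\ \ge\ \delta\,\|v_\sigma\|_{L^2(0,T;\mathbb{L}^2(\Omega))}^2\qquad\text{for all }\sigma\le\sigma_0,
\]
where $u_\theta=\bar u+\theta_\sigma(\bar u_\sigma-\bar u)$ is the intermediate point from the mean value theorem applied to the $C^2$ functional $J'$. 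I expect this to be the main obstacle: $v_\sigma$ is not critical, so everything hinges on controlling the sets where $\bar\lambda+\gamma\bar u$ is small but nonzero, which in turn relies on the strong convergence $\bar u_\sigma\to\bar u$.

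Granting this, I would combine the identity $(J'(\bar u_\sigma)-J'(\bar u))v_\sigma=J''(u_\theta)v_\sigma^2$ with the continuous variational inequality $J'(\bar u)(v-\bar u)\ge 0$ for all $v\in U_{\alpha,\beta}$---valid at $v=\bar u_\sigma$ because the box structure gives $U_{\sigma,ad}\subset U_{\alpha,\beta}$---to obtain $\delta\|v_\sigma\|^2\le J'(\bar u_\sigma)(\bar u_\sigma-\bar u)$. Inserting $\Pi_\sigma\bar u\in U_{\sigma,ad}$, the $L^2(Q_h)$-orthogonal projection of $\bar u$ onto $U_\sigma$ (admissible, since cell averages of functions with values in $\prod_{i=1}^{3}[\alpha_i,\beta_i]$ stay in that box), and the discrete variational inequality $J'_\sigma(\bar u_\sigma)(\Pi_\sigma\bar u-\bar u_\sigma)\ge 0$, I would split
\[
\delta\,\|v_\sigma\|^2\ \le\ \bigl[J'(\bar u_\sigma)-J'_\sigma(\bar u_\sigma)\bigr](\bar u_\sigma-\Pi_\sigma\bar u)\ +\ J'(\bar u_\sigma)(\Pi_\sigma\bar u-\bar u).
\]
For the first term, the integrals over $Q\setminus Q_h$ cancel (the extensions agree there, and $|\Omega\setminus\Omega_h|\le Ch^2$ by \eqref{Ap1}), leaving $\int_{Q_h}(\lambda_{\bar u_\sigma}-\bar\lambda_\sigma)(\bar u_\sigma-\Pi_\sigma\bar u)$, which by Cauchy--Schwarz and \eqref{ADE9} with $u=v=\bar u_\sigma$ is $\le C(h+\tau+\sqrt\tau)\,\|\bar u_\sigma-\Pi_\sigma\bar u\|_{L^2(Q)}$. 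For the second term, orthogonality of $\Pi_\sigma$ and $\bar u_\sigma\in U_\sigma$ let me rewrite it as $\int_Q(\lambda_{\bar u_\sigma}-\Pi_\sigma\lambda_{\bar u_\sigma})(\Pi_\sigma\bar u-\bar u)$, so that Cauchy--Schwarz plus the approximation bounds $\|\lambda_{\bar u_\sigma}-\Pi_\sigma\lambda_{\bar u_\sigma}\|_{L^2(Q)}\le C(h+\tau)$ (using that $\lambda_{\bar u_\sigma}$ is bounded in $W^{1,2}(0,T;D(A))$ uniformly in $\sigma$ by the a priori estimate for the adjoint equation, since $\|\bar u_\sigma\|$ is bounded) and $\|\bar u-\Pi_\sigma\bar u\|_{L^2(Q)}\le C(h+\tau)$ yield $C(h+\tau)^2$.

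Finally, using $\|\bar u_\sigma-\Pi_\sigma\bar u\|_{L^2(Q)}\le\|v_\sigma\|_{L^2(Q)}+C(h+\tau)$ and Young's inequality to absorb $\tfrac{\delta}{2}\|v_\sigma\|^2$ into the left-hand side, I get $\|v_\sigma\|_{L^2(0,T;\mathbb{L}^2(\Omega))}\le C(h+\tau+\sqrt\tau)$, i.e. \eqref{MEST1}. Then \eqref{MEST2} follows from $\|\bar y-\bar y_\sigma\|_{L^\infty(0,T;\mathbb{H}^1(\Omega))}\le\|G(\bar u)-G(\bar u_\sigma)\|_{L^\infty(0,T;\mathbb{H}^1(\Omega))}+\|y_{\bar u_\sigma}-y_\sigma(\bar u_\sigma)\|_{L^\infty(0,T;\mathbb{H}^1(\Omega))}$ together with \eqref{DS15.2} (taking $u=\bar u$, $v=\bar u_\sigma$) and \eqref{MEST1}; and \eqref{MEST3} follows in the same way from \eqref{ADE9} (with $u=\bar u$, $v=\bar u_\sigma$) and \eqref{MEST1}.
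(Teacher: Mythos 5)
Your overall architecture is sound, and your derivation of \eqref{MEST2} and \eqref{MEST3} from \eqref{MEST1} via \eqref{DS15.2} and \eqref{ADE9} is exactly what the paper does. For \eqref{MEST1}, however, you take a genuinely different route from the paper. The paper argues by contradiction: it assumes $\limsup_{\sigma\to 0}\frac{1}{h}\|\bar u-\bar u_\sigma\|_{L^2(0,T;\mathbb{L}^2(\Omega_h))}=+\infty$, and then (deferring entirely to \cite[Section 4.4]{Casas2012}) normalizes $w_\sigma=(\bar u_\sigma-\bar u)/\|\bar u_\sigma-\bar u\|$, extracts a weak limit, shows it lies in $\mathcal{C}_{\bar u}$, and contradicts \eqref{DGE12}. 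You instead run the direct quantitative argument: coercivity of $J''$ along $v_\sigma=\bar u_\sigma-\bar u$, the two variational inequalities, the splitting
\[
\delta\|v_\sigma\|^2\le\bigl[J'(\bar u_\sigma)-J'_\sigma(\bar u_\sigma)\bigr](\bar u_\sigma-\Pi_\sigma\bar u)+J'(\bar u_\sigma)(\Pi_\sigma\bar u-\bar u),
\]
and the estimates of the two terms via \eqref{ADE9} and the regularity $\bar u=\mathrm{Proj}_{[\alpha,\beta]}(-\bar\lambda/\gamma)\in L^2(0,T;\mathbb{H}^1(\Omega))\cap W^{1,2}(0,T;\mathbb{L}^2(\Omega))$. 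All of that bookkeeping is correct: the cancellation of the $\gamma\bar u_\sigma$ terms, the support argument on $Q_h$, the orthogonality trick for the $\Pi_\sigma$ term, and the final absorption by Young's inequality all check out and give the stated rate. The direct route, when complete, is arguably more informative (it exhibits the constant's dependence explicitly), while the contradiction route is shorter because weak compactness does the hard work.

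The genuine gap is the coercivity inequality $J''(u_\theta)v_\sigma^2\ge\delta\|v_\sigma\|^2$ for $\sigma$ small, which you assert via "the usual device" but do not prove, and which is precisely the step the paper's contradiction argument is designed to avoid. The difficulty is real: $v_\sigma$ is not a critical direction, and \eqref{DGE12} only gives positivity of $J''(\bar u)$ on $\mathcal{C}_{\bar u}\setminus\{0\}$. To make your route work one needs an equivalence of the type proved by Casas and Tr\"oltzsch: \eqref{DGE12} implies $J''(\bar u)v^2\ge\delta\|v\|^2$ on an enlarged cone $\mathcal{C}_{\bar u}^\varepsilon$ (defined by relaxing \eqref{DGE10} to $|\bar\lambda_j+\gamma\bar u_j|\le\varepsilon$ and/or by an $L^1$-smallness tolerance), together with a proof that the quadratic form $v\mapsto J''(\bar u)v^2-\gamma\|v\|^2$ is weakly continuous (which here requires the compactness of $v\mapsto z_v$ coming from \eqref{DGE2.1} and the structure of the term $-2\int_0^T c(z_v,z_v,\bar\lambda)\,dt$), and finally a decomposition of $v_\sigma$ showing that its non-admissible part is negligible thanks to $\bar u_\sigma\to\bar u$ in $L^2$ and the uniform $L^\infty(Q)$ bound on $U_{\alpha,\beta}$. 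None of this is automatic, and it is the mathematical core of the theorem; as written, your proof replaces it with a placeholder. Either supply this extended-cone coercivity lemma in full, or switch at this one point to the paper's normalization-and-contradiction argument, after which the rest of your computation goes through unchanged.
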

\begin{proof} The estimates \eqref{MEST2} and \eqref{MEST3} are an immediate consequence of \eqref{MEST1}, \eqref{DS15.2},
and \eqref{ADE9}. We only have to prove \eqref{MEST1}. To this end, we proceed by contradiction and assume that it is false. This implies that
$$
\limsup_{\sigma \to 0} \dfrac{1}{h}\|\bar{u}-\bar{u}_\sigma\|_{L^2(0,T;\mathbb{L}^2(\Omega_h))}=+\infty; 
$$
therefore, there exists a sequence of $\sigma$ such that
\begin{equation*}
\lim_{\sigma \to 0} \dfrac{1}{h}\|\bar{u}-\bar{u}_\sigma\|_{L^2(0,T;\mathbb{L}^2(\Omega_h))}=+\infty.
\end{equation*}
Now, arguing exactly as in \cite[Section 4.4]{Casas2012} and using the second-order optimality conditions, we will obtain a contradiction for this sequence. 
\end{proof}
\begin{remark} {\rm 
The error order in the two last estimates in the above theorem looks a bit different from those (of order $O(h)$) in \cite{Casas2012}. The reason is that we do not require the technical condition $\tau\le Ch^2$ as in \cite{Casas2012}, so the error should contain both $\tau$ and $h$. It is obvious that if $\tau\le Ch^2$ then these orders of errors are the same.}
\end{remark}

\vskip 0.5cm

\noindent{\bf Acknowledgements.}  This research is funded by Vietnam National Foundation for Science and Technology Development (NAFOSTED) under grant number 101.02-2018.303.


\begin{thebibliography}{00}

\bibitem{AT1990} F. Abergel and R. Temam, On some control problems in fluid mechanics, {\it Theor. Comput. Fluid Dyn.} 1 (1990), 303-325.

\bibitem{AN2016} C.T. Anh and T.M. Nguyet, Optimal control of the instationary 3D Navier-Stokes-Voigt equations, {\it Numer. Funct. Anal. Optim.} 37 (2016), 415-439.

\bibitem{AN2017} C.T. Anh and T.M. Nguyet, Time optimal control of the unsteady 3D Navier-Stokes-Voigt equations,  {\it Appl. Math. Optim.} 79 (2019), 397-426.

\bibitem{AT2013} C.T. Anh and P.T. Trang, Pull-back attractors for three-dimensional Navier-Stokes-Voigt equations in some unbounded domains, {\it Proc. Roy. Soc. Edinburgh Sect. A} 143 (2013), 223-251.

\bibitem{AT2016} C.T. Anh and P.T. Trang, Decay rate of solutions to the 3D Navier-Stokes-Voigt equations in $H^m$ space, {\it Appl. Math. Lett.} 61 (2016), 1-7.

\bibitem{Cao} Y. Cao, E. M. Lunasin and E.S. Titi, Global well-posedness of the three-dimensional viscous and inviscid simplified Bardina turbulence models, {\it Commun. Math. Sci.} 4 (2006), 823-848.

\bibitem{Casas2012} E. Casas and K. Chrysafinos, A discontinuous Galerkin time-stepping scheme for the velocity tracking problem, {\it SIAM J. Numer. Anal.} 50 (2012), 2281-2306.



\bibitem{Casas2015} E. Casas and K. Chrysafinos, Error estimates for the discretization of the velocity tracking problem, {\it Numer. Math.} 130 (2015), 615-643.

\bibitem{Casas2016} E. Casas and K. Chrysafinos, Analysis of the velocity tracking control problem for the 3D evolutionary Navier-Stokes equations, {\it SIAM J. Control Optim.} 54 (2016), 99-128. 

\bibitem{Casas2017} E. Casas and K. Chrysafinos, Error estimates for the approximation of the velocity tracking problem with bang-bang controls, {\it ESAIM Control Optim. Calc. Var.} 23 (2017), 1267-1291.

\bibitem{CMR2007} E. Casas, M. Mateos and J.-P. Raymond, Error estimates for the numerical approximation of a distributed
control problem for the steady-state Navier-Stokes equations, {\it SIAM J. Control Optim.} 46 (2007), 952-982.


\bibitem{Chrys} K. Chrysafinos and N.J. Walkington, Discontinuous Galerkin approximations of the Stokes and Navier-Stokes equations, {\it Math. Comp.} 79 (2010), 2135-2167.

\bibitem{Gal2015} M. Conti Zelati and C.G. Gal, Singular limits of Voigt models in fluid dynamics, {\it J. Math. Fluid Mech.} 17 (2015), 233-259.

\bibitem{ConstantinFoias} P. Contantin and C. Foias, \emph{Navier-Stokes Equations}, {Chicago Lectures in Mathematics}, University of Chicago Press, Chicago, 1988.

\bibitem{Damazio2016} P.D. Dam\'azio, P. Manholi and A.L. Silvestre, $L^q$-theory of the Kelvin-Voigt equations in bounded domains, {\it J. Differential Equations} 260 (2016), 8242-8260.

\bibitem{DH2004} K. Deckelnick and M. Hinze, Semidiscretization and error estimates for distributed control of the instationary Navier-Stokes equations, {\it Numer. Math.} 97 (2004), 297-320.



\bibitem{EbrahimiHolstLunasin} M.A. Ebrahimi, M. Holst and E. Lunasin, The Navier-Stokes-Voight model for image inpainting, {\it IMA J. Appl. Math. } 78 (2013), 869-894.





\bibitem{GMR2012} J. Garc\'ia-Luengo, P. Mar\'in-Rubio and J. Real, Pullback attractors for three-dimensional non-autonomous Navier-Stokes-Voigt equations, {\it Nonlinearity} 25 (2012), 905-930.


\bibitem{Girault} P. Girault and P.A. Raviart, {\it Finite Element Methods for Navier-Stokes Equations. Theory and Algorithms,} Springer-Verlag, Berlin, Heidelberg, New York, Tokyo, 1986.

\bibitem{G2003} M.D. Gunzburger, {\it Perspectives in Flow Control and Optimization}, Advances in Design and
Control. SIAM, Philadelphia, 2003.


\bibitem{GM2000} M.D. Gunzburger and S. Manservisi, Analysis and approximation of the velocity tracking problem for Navier-Stokes flows with distributed control, {\it SIAM J. Numer. Anal.} 37 (2000), 1481-1512.

\bibitem{GM2000a} M.D. Gunzburger and S. Manservisi, The velocity tracking problem for Navier-Stokes
flows with boundary controls, {\it SIAM J. Control Optim.} 39 (2000), 594-634.

\bibitem{Hinze2000} M. Hinze, {\it Optimal and instantaneous control of the instationary Navier-Stokes equations}, Habilitationsschrift, Fachbereich Mathematik, Technische Universit\''{a}t Berlin, 2000.

\bibitem{Hinze2005} M. Hinze, A variational discretization concept in control constrained optimization: the linear
quadratic case, {\it Comput. Optim. Appl.} 30 (2005), 45-61.

\bibitem{HK2001} M. Hinze and K. Kunisch, Second order methods for optimal control of time-dependent fluid flow, {\it SIAM
J. Control Optim.} 40 (2001), 925-946.

\bibitem{Holst2010} M. Holst, E. Lunasin and G. Tsogtgerel, Analysis of a general family of regularized Navier-Stokes and MHD models, {\it J. Nonlinear Sci.} 20 (2010), 523-567.



\bibitem{KT2009} V.K. Kalantarov and E.S. Titi, Global attractor and determining modes for the 3D  Navier- Stokes-Voight equations, {\it Chin. Ann. Math. Ser. B } 30 (2009), 697-714.

\bibitem{Niche2016} C.J. Niche, Decay characterization of solutions to Navier-Stokes-Voigt equations in term of the initial datum, {\it J. Differential Equations} 260 (2016), 4440-4453.

\bibitem{Oskolkov} A.P. Oskolkov, The uniqueness and solvability in the large of boundary value problems for the equations of motion of aqueous solutions of polymers, {\it Nauchn. Semin. LOMI} 38 (1973), 98-136.

\bibitem{Qin2012} Y. Qin, X. Yang and X. Liu, Averaging of a 3D Navier-Stokes-Voigt equations with singularly oscillating forces, {\it Nonlinear Anal. Real World Appl.} 13 (2012), 893-904.

\bibitem{Robinson} J.C. Robinson, \emph{Infinite-Dimensional Dynamical Systems}, Cambridge University Press, United Kingdom, 2001.

\bibitem{Simon} J. Simon, Compact sets in the space $L^p(0,T;B)$, {\it Ann. Mat. Pura Appl.} 146 (1987), 65-96.

\bibitem{Sritharan1998} S.S. Sritharan, {\it Optimal Control of Viscous Flow}, SIAM, Philadelphia, 1998.

\bibitem{Temam} R. Temam, \emph{Navier-Stokes Equations: Theory and Numerical Analysis,} 2nd edition, Amsterdam, North-Holland, 1979.

\bibitem{TW2006} F. Tr\"{o}ltzsch and D. Wachsmuth, Second-order suficcient optimality conditions for the optimal control of Navier-Stokes equations, {\it ESAIM: COCV} 12 (2006), 93-119.

\bibitem{Wachsmuth} D. Wachsmuth, \emph{Optimal control of the unsteady Navier-Stokes equations,} PhD thesis, TU Berlin,  2006.

\bibitem{YueZhong} G. Yue and C.K. Zhong, Attractors for autonomous and nonautonomous  3D Navier-Stokes-Voight equations, \emph{Discrete. Cont. Dyna. Syst. Ser. B} 16 (2011), 985-1002.

\bibitem{ZZ2015} C. Zhao and H. Zhu, Upper bound of decay rate for solutions to the Navier-Stokes-Voigt equations in $\mathbb{R}^3$, {\it Appl. Math. Comput.} 256 (2015), 183-191.

\end{thebibliography}
\end{document}